\documentclass[12pt]{amsart}
\usepackage[margin=0.9in]{geometry}
\usepackage{amscd, amssymb, amsmath, wasysym}
\usepackage{graphicx}
\usepackage{amsfonts}
\usepackage{mathrsfs}    
\usepackage{mathtools}
\usepackage{eucal}     
\usepackage{latexsym}   
\usepackage{verbatim}   
\usepackage[all]{xy}   
\usepackage[dvipsnames]{xcolor}
\usepackage{bookmark}

\usepackage{hyperref}
 \hypersetup{
     colorlinks=true,
     linkcolor=NavyBlue,
     filecolor=NavyBlue,
     citecolor = TealBlue,
     urlcolor=magenta,
  }

\makeatletter

\newcounter{thmcounter}

\numberwithin{thmcounter}{section}
\numberwithin{equation}{section}

\newtheorem{theorem}[thmcounter]{Theorem}
\newtheorem{proposition}[thmcounter]{Proposition}
\newtheorem{lemma}[thmcounter]{Lemma}
\newtheorem{corollary}[thmcounter]{Corollary}

\theoremstyle{definition}

\newtheorem{remark}[thmcounter]{Remark}

\newtheoremstyle{claim}{9pt}{3pt}{}{\parindent}{\bf}{.}{1em}{}

\theoremstyle{claim}

\newenvironment{namelist}[1]{%
\begin{list}{}
{
\settowidth{\labelwidth}{#1}%
\setlength{\labelsep}{0.3em}%
\setlength{\leftmargin}{\labelwidth}%
\addtolength{\leftmargin}{\labelsep}}}{%
\end{list}}

\newcommand{\nC}{\mathbf{C}}  

\newcommand{\nP}{\mathbf{P}}

\newcommand{\sO}{\mathscr{O}}

\DeclareMathOperator{\ch}{ch}                    
\DeclareMathOperator{\coker}{coker}              
\DeclareMathOperator{\gon}{gon}                  
\DeclareMathOperator{\ev}{ev}					 
\DeclareMathOperator{\maxirr}{\text{Irr}_{\text{max}}}            
\DeclareMathOperator{\Pic}{Pic}                  
\DeclareMathOperator{\pr}{pr} 
\DeclareMathOperator{\Supp}{Supp}                
               
\DeclareMathOperator{\Sym}{Sym}        
\DeclareMathOperator{\td}{td}                    

\DeclareMathOperator{\rank}{rank}                

\newcounter{rkcounter}             
\begin{document}

\title[Graded Betti numbers of general curves of large degree]{Graded Betti numbers of general curves of large degree}

\author{JeongDon Lee}
\address{Department of Mathematical Sciences, KAIST, 291 Daehak-ro, Yuseong-gu, Daejeon 34141, Republic of Korea}
\email{s1865915@kaist.ac.kr}

\author{Li Li}
\address{Center for Complex Geometry, Institute for Basic Science (IBS), 55 Expo-ro, Yuseong-gu, Daejeon 34126, Republic of Korea}
\email{lili@ibs.re.kr}

\author{Jinhyung Park}
\address{Department of Mathematical Sciences, KAIST, 291 Daehak-ro, Yuseong-gu, Daejeon 34141, Republic of Korea}
\email{parkjh13@kaist.ac.kr}

\date{\today}



\thanks{J.P.~was partially supported by the National Research Foundation (NRF) funded by the Korea government (MSIT) (RS-2026-25478877 and RS-2026-25506097).}

\begin{abstract} 
Let $C$ be a smooth projective complex curve of genus $g$ and gonality $k$, and $L$ be a very ample line bundle on $C$. When $L$ has sufficiently large degree, the vanishing and nonvanishing of the Koszul cohomology groups $K_{p,q}(C,L)$ have been determined previously, but the exact values of the graded Betti numbers $\kappa_{p,q}(C, L)$ remain largely unknown. In this paper, we give explicit closed formulas for all graded Betti numbers $\kappa_{p,q}(C, L)$ when the Brill--Noether locus $W_k^1(C)$ has the expected dimension and $H^1(C, L \otimes \omega_C^{-1})=0$. Consequently, we determine the complete Betti table for a general curve when $\deg L \geq 4g-3$ or when $\deg L \geq 3g-3$ and $L$ is general. We also explicitly compute the Boij--S\"{o}derberg coefficient of the section ring $R(C, L)$ governing asymptotic purity, and show eventual monotonicity of the remaining coefficients: they decrease for hyperelliptic curves and increase under a natural generic reducedness assumption on the relevant Brill--Noether loci. 
\end{abstract}

\maketitle

\section{Introduction}

\noindent Let $C$ be a smooth projective complex curve of genus $g$, and $L$ be a very ample line bundle on $C$ with $d:= \deg L \geq 2g+1$. We set $r:=r(L)=h^0(C,L)-1$ and $k:=\gon(C)$, and write 
$$
\kappa_{p,q}(C,L):=\dim K_{p,q}(C,L),
$$
where $K_{p,q}(C,L)$ is the space of $p$-th syzygies of weight $q$ of the section ring $R(C,L)$ (see Subsection \ref{subsec:syzygies}). Note that $K_{p,0}(C,L) \neq 0$ if and only if $p=0$, and in this case, $\kappa_{0,0}(C,L)=1$. It is also known that $K_{p,q}(C,L)=0$ for $q \geq 3$. The only relevant Koszul cohomology groups are $K_{p,1}(C,L)$ and $K_{p,2}(C,L)$. Green's $(2g+1+p)$-theorem \cite[Theorem 4.a.1]{Green84} gives 
$$
K_{p,2}(C,L)=0~~\text{ for $0 \leq p \leq r-g-1$},
$$
while Green--Lazarsfeld \cite[Theorem 2]{GL88} showed the complementary nonvanishing result: if $H^0(C, L \otimes \omega_C^{-1}) \neq 0$, then 
$$
K_{p,2}(C,L) \neq 0~~\text{ for $r-g \leq p \leq r-1$}.
$$
On the other hand, Green--Lazarsfeld nonvanishing theorem \cite[Appendix]{Green84} yields that 
$$
K_{p,1}(C,L) \neq 0~~\text{ for $1 \leq p \leq r-k$}
$$
when $d \geq 2g+k-1$. Green--Lazarsfeld gonality conjecture \cite[Conjecture 3.7]{GL86} predicts that this range is sharp:
$$
K_{p,1}(C,L) = 0~~\text{ for $p \geq r-k+1$}.
$$
This conjecture was proved for $d \gg 0$ by Ein--Lazarsfeld \cite[Theorem A]{EL15} for general curves with $d \geq 2g+k-1$ by Farkas--Kemeny \cite[Theorem 0.1]{FK19}, and finally for arbitrary curves in sharp effective form by Niu--Park \cite[Theorem 1.1]{NP24+}. In particular, when $d \geq 3g-2$, the vanishing pattern of the Koszul cohomology groups $K_{p,1}(C,L)$ and $K_{p,2}(C,L)$ is completely understood.

\medskip

The remaining natural problem is to compute the exact values of the nonzero graded Betti numbers $\kappa_{p,1}(C,L)$ and $\kappa_{p,2}(C,L)$. This problem has attracted considerable attention in the recent development of asymptotic syzygies of algebraic varieties \cite{EL12, Park22, Park25} (it was posed explicitly in \cite[Problem 7.3]{EL12}). The difference $\kappa_{p,1}(C,L)-\kappa_{p-1,2}(C,L)$ is determined in terms of $d$ and $g$ (see (\ref{eq:k_1-k_2})), so one of the graded Betti numbers can be computed once the other vanishes. If $C$ is rational or elliptic, then all the graded Betti numbers are determined. We henceforth assume that $g \geq 2$. The difficulty begins in the overlap range $K_{p,1}(C,L) \neq 0$ and $K_{p-1,2}(C,L) \neq 0$. Ein--Lazarsfeld \cite[Theorem C]{EL15} proved that for $d=\deg L \gg 0$, both the graded Betti numbers $\kappa_{p,1}(C,L)$ and $\kappa_{p-1,2}(C,L)$ in this range are polynomials in $d$ whose degrees reflect the geometry of $C$. The leading coefficients and an effective range for this polynomial behavior were left open in general. When $C$ is hyperelliptic, one can compute all the graded Betti numbers using its special geometric properties (see Theorem \ref{thm:bettihyperelliptic}). Kemeny \cite[Theorem 1.1]{Kemeny20} computed the last nonzero weight-one graded Betti number for a general curve in each non-maximal gonality stratum (see also Remark \ref{rem:leadcoff}). For a curve of maximal gonality, however, no formula was known for the graded Betti numbers in the overlap range. This absence is notable since a general curve in the moduli space of curves of genus $g$ has maximal gonality $\lfloor (g+3)/2 \rfloor$, which is a very natural property from a Brill--Noether theoretic point of view. If $k=\lfloor (g+3)/2 \rfloor$ and the Brill--Noether locus 
$W_k^1(C)$ has the expected dimension $0$ when $g$ is even, then 
$$
C_p^1=\{\xi \in C_p \mid h^0(C, \sO_C(\xi)) \geq 2\}
$$
has the expected dimension $2p-g-1$ for every $k \leq p \leq g-1$. This property is precisely what enables us to compute the graded Betti numbers in the overlap range. Our first main theorem gives exact closed formulas for them. 

\begin{theorem}\label{thm:main1}
Assume that $C$ has maximal gonality $k=\lfloor (g+3)/2 \rfloor$, that $\dim W_k^1(C)=0$ if $g$ is even, and that  $H^1(C, L \otimes \omega_C^{-1})=0$. For $k \leq p \leq g-1$, we have 
$$
\kappa_{r-p, 1}(C, L)= \sum_{i=g-p}^p {g \choose i} \left( (i-g+p) {d- 2g \choose p-i} - {d - 2g \choose p-i-1} \right)
$$
and
$$
\kappa_{r-p-1,2}(C,L)=\sum_{i=0}^{g-p-1} {g \choose i} \left( (g-p-i) {d-2g \choose p-i} + {d-2g \choose p-i-1} \right).
$$
\end{theorem}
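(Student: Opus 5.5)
The plan is to pass to the symmetric product $C_p$ and compute both Betti numbers as Euler characteristics of explicit sheaves. First, by duality for Koszul cohomology on curves, $K_{r-p,1}(C,L)^\vee \cong K_{p-1,1}(C,\omega_C;L)$ and $K_{r-p-1,2}(C,L)^\vee \cong K_{p,0}(C,\omega_C;L)$. By (\ref{eq:k_1-k_2}) the difference $\kappa_{r-p,1}-\kappa_{r-p-1,2}$ depends only on $d,g,p$, so it suffices to compute one of them and obtain the other by subtraction. I would compute $\kappa_{r-p-1,2}$, since its formula has the simpler range $0\le i\le g-p-1$, and then check that subtracting it from the known difference produces the first formula. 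The summation ranges reflect this: the index $i$ runs below $g-p$ in one formula and from $g-p$ upward in the other.

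Second, I would use the Ein--Lazarsfeld / Niu--Park description of these groups. Write $E_{p,B}$ for the tautological rank-$p$ bundle on $C_p$ attached to a line bundle $B$ on $C$. Then $K_{p,0}(C,\omega_C;L)$, respectively $K_{p-1,1}(C,\omega_C;L)$, is the kernel, respectively the cokernel, of a map involving $H^0(C_p, \det E_{p,L}\otimes \sO(\pm\text{diagonal}))$ and the evaluation map
$$\ev\colon H^0(C,\omega_C)\otimes \sO_{C_p}\to E_{p,\omega_C}.$$
The degeneracy locus of $\ev$, where its rank drops below $p$, is exactly $C_p^1$. This is where the hypotheses enter. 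Maximal gonality, together with $\dim W_k^1(C)=0$ when $g$ is even, gives that $C_p^1$ has the expected dimension $2p-g-1$ for $k\le p\le g-1$. Hence the Eagon--Northcott type complex attached to $\ev$ is exact, and resolves the cokernel (or the ideal of $C_p^1$) by bundles of the form $\wedge^j E_{p,\omega_C}^\vee \otimes \Sym^{\bullet}$ tensored with trivial bundles of rank ${g\choose j}$. The factor ${g\choose i}$ in both formulas should be the rank of $\wedge^i H^0(C,\omega_C)$ appearing in this resolution.

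Third, I would twist the resolution by the line bundle coming from $L$ and take cohomology. Via the Abel--Jacobi map $C_p\to \Pic^p(C)$ and the hypothesis $H^1(C,L\otimes\omega_C^{-1})=0$, the relevant line bundles $\det E_{p,L\otimes\omega_C^{-i}}$-type twists should have vanishing higher cohomology. Their $H^0$ are then computed by $H^0(C_p,\cdot)=\Sym$/$\wedge$ of $H^0(C,L\otimes\omega_C^{-1})$, which has dimension $d-2g+1$ by Riemann--Roch. This should produce the terms ${d-2g\choose p-i}$ and ${d-2g\choose p-i-1}$. The coefficients $(g-p-i)$ would then arise either from a Macdonald-type Euler characteristic computation or from a rank count in the Eagon--Northcott terms. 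After this, $\kappa_{r-p-1,2}$ is an alternating sum of known dimensions, and I would simplify it using Vandermonde-type binomial identities.

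I expect the main obstacle to be the vanishing step. One must show that every term of the twisted Eagon--Northcott complex has no higher cohomology, so that hypercohomology reduces to an honest alternating sum. This must hold uniformly for all $k\le p\le g-1$, including the boundary case $p=k$ when $g$ is even and $C_p^1$ is a finite union of $\nP^1$-fibers. My first attempt would be Kodaira/Kawamata--Viehweg-type vanishing on $C_p$ for $\det E_{p,L}\otimes\wedge^jE^\vee_{p,\omega_C}$, using that $L\otimes\omega_C^{-1}$ is nonspecial. If that is not enough, I would push forward along the Abel--Jacobi map and use the projective bundle structure over $\Pic^p(C)$ away from $C_p^1$.
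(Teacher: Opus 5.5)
Your architecture matches the paper's. You realize $\kappa_{r-p-1,2}(C,L)=h^0(C_p,\mathscr{F}_{p,\omega_C}\otimes N_{p,L})$ with $\mathscr{F}_{p,\omega_C}=\ker\ev_{p,\omega_C}$, resolve it using $\operatorname{codim}C_p^1=g-p+1$, prove a vanishing, and then use (\ref{eq:k_1-k_2}) for $\kappa_{r-p,1}$. The relevant complex is the Buchsbaum--Rim resolution of the kernel. Its twisted terms are $\wedge^{p+1+j}H^0(C,\omega_C)\otimes S^jT_{C_p}\otimes S_{p,A}$, with $A=L\otimes\omega_C^{-1}$ and $0\le j\le g-p-1$; note these are symmetric, not exterior, powers of $E_{p,\omega_C}^\vee=T_{C_p}$.

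\textbf{The vanishing you target is false.} You aim to show that every term has no higher cohomology. This fails in the theorem's range. Take $C$ general of genus $6$, $p=k=4$, and $d=21=4g-3$. The resolution is $0\to T_{C_4}\otimes S_{4,A}\to\wedge^5H^0(C,\omega_C)\otimes S_{4,A}\to\mathscr{F}_{4,\omega_C}\otimes N_{4,L}\to0$. Hirzebruch--Riemann--Roch gives $\chi(T_{C_4}\otimes S_{4,A})=5\binom{9}{4}+5\binom{10}{4}-6\binom{11}{4}=-300$. This is consistent with $6\cdot126-1056$, where $1056$ is the theorem's value. So this term necessarily has nonzero higher cohomology.

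What is actually needed, and what the paper proves (Proposition \ref{prop:vanishing}), is only the staircase vanishing $H^i(C_p,S^jE_{p,\omega_C}^\vee\otimes S_{p,A})=0$ for $i>j$. Chasing the resolution, this kills $H^{i}(\mathscr{F}_{p,\omega_C}\otimes N_{p,L})$ for $i>0$, so $h^0=\chi$. Then $\chi$ is additive along the complex, whatever the cohomology of the individual terms. The paper proves the staircase by induction on $p$: it pulls back along the finite map $\sigma_{p-1,1}\colon C_{p-1}\times C\to C_p$ and uses the sequence $0\to E_{p-1,B}^\vee\boxtimes\sO_C\to\sigma_{p-1,1}^*E_{p,B}^\vee\to(\sO_{C_{p-1}}\boxtimes B^{-1})(D_{p-1,1})\to0$. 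Your two proposed tools do not reach $S^jT_{C_p}$. Kawamata--Viehweg is aimed at the wrong bundles. Pushing forward along $u_p$ gives nothing, since $u_p$ is birational onto its image for $p\le g-1$.

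\textbf{The counting step has no working mechanism.} For $j\ge1$ the terms have rank $\binom{p+j-1}{j}$, and their cohomology is not $\Sym$ or $\wedge$ of $H^0(C,A)$. Moreover $h^0(C,L\otimes\omega_C^{-1})=d-3g+3$, not $d-2g+1$. The factor $\binom{g}{i}$ in the answer is also not the rank of $\wedge^iH^0(C,\omega_C)$; the complex carries $\binom{g}{p+1+i}$.

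The missing ingredient is an explicit HRR computation of $\chi(S^jT_{C_p}\otimes S_{p,A})$. Macdonald's formula $c_t(T_{C_p})=(1+x_pt)^{p-g+1}e^{-\theta_pt/(1+x_pt)}$, together with $x_p^{p-j}\theta_p^j=g!/(g-j)!$, turns $\int_{C_p}\Psi(x_p)\prod_i\Phi(\lambda_i)$ into a one-variable coefficient extraction (Lemma \ref{lem:powerseries}). Apply this to $\sum_j\ch(S^jT_{C_p})w^j\,\td(C_p)$ and substitute $z=e^u-1$. The alternating sum then collapses to $\sum_i\binom{g}{i}\bigl(\binom{a-1}{p-i}+(g-p-1-i)\binom{a-2}{p-i}\bigr)$ with $a=d-2g+2$ (Proposition \ref{prop:eulercharacteristic}). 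This is where the binomials in $d-2g$ come from. Your ``Macdonald-type'' aside points this way, but as written nothing in the proposal produces the stated sums.
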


Theorem \ref{thm:main1} determines all previously undetermined graded Betti numbers, and combined with the known vanishing results, it therefore determines the complete Betti table of $R(C,L)$. The cohomology condition in the theorem is automatic either when $d \geq 4g-3$ or when $L$ is general and $d \geq 3g-3$. Notice that if $C$ is a general curve, then $C$ has maximal gonality and $W_k^1(C)$ has the expected dimension. Hence the theorem determines the complete Betti table of $R(C,L)$ when $C$ is a general curve and $L$ is a general line bundle with $d \geq 3g-3$. 

\medskip

The formulas in Theorem \ref{thm:main1} provide a sharp extremal principle. Curves satisfying the hypotheses of Theorem 1.1 realize the lower bounds for $\kappa_{r-p, 1}(C, L)$ and $\kappa_{r-p-1,2}(C,L)$, and hyperelliptic curves attain the upper bounds. 

\begin{corollary}\label{cor:inequality}
Assume that $d=\deg L \gg 0$. Then
$$
\sum_{i=g-p}^p {g \choose i} \left( (i-g+p) {d- 2g \choose p-i} - {d - 2g \choose p-i-1} \right)
\leq \kappa_{r-p, 1}(C, L) \leq
(d-g-p){d-g-1 \choose p-2}
$$
and
$$
\sum_{i=0}^{g-p-1} {g \choose i} \left( (g-p-i) {d-2g \choose p-i} + {d-2g \choose p-i-1} \right) 
\leq \kappa_{r-p-1,2}(C,L) \leq
(g-p){d-g-1 \choose p}.
$$
for $k \leq p \leq g-1$. 
If the equality on the left holds for all $p$, then $C$ has maximal gonality and $\dim W_k^1(C)=0$ when $g$ is even. 
If the equality on the right holds for some $p$, then $C$ is hyperelliptic, and consequently, the equality on the right holds for all $p$.
\end{corollary}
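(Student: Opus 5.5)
Since $\kappa_{r-p,1}(C,L)-\kappa_{r-p-1,2}(C,L)$ depends only on $d,g,p$ by (\ref{eq:k_1-k_2}), it suffices to prove the inequalities for one row, say weight two. The other row then follows by adding the common difference; one checks that this difference is the same for both lower bounds, and likewise for the upper bounds given by Theorem \ref{thm:bettihyperelliptic}. The whole argument rests on the approach already used to prove Theorem \ref{thm:main1}, which I expect to be the following. For $d\gg 0$ (so $H^1(L\otimes\omega_C^{-1})=0$), duality and the Ein--Lazarsfeld/Niu--Park description of $K_{r-p-1,2}(C,L)$ express it as the cokernel, or equivalently $H^1$, of a tautological-type bundle computation on the symmetric product $C_p$. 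Concretely, I expect
$$\kappa_{r-p-1,2}(C,L)=h^0\big(C_p,\ \text{(sheaf depending on $L$)}\otimes \mathcal{I}\big)$$
or a similar quantity, where the correction term is supported on the Brill--Noether locus $C^1_p$. The first step is to write $\kappa_{r-p-1,2}(C,L)$ as a universal ``main term'' $M_p(d,g)$ plus a correction $E_p(C,L)$. Here $E_p$ is an Euler characteristic or length-type contribution coming from $C^1_p$ and its higher-rank strata $C^r_p$.

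The second step is the lower bound. When $C^1_p$ has expected dimension $2p-g-1$, the correction is computed by the Harris--Tu / Porteous formula, which yields exactly the closed form of Theorem \ref{thm:main1}. In general, $C^1_p$ has dimension at least $2p-g-1$, and the correction is a nonnegative quantity that is minimized in the expected-dimension case. Since $d\gg0$, the Betti numbers are polynomials in $d$ by Ein--Lazarsfeld \cite[Theorem C]{EL15}. The degree in $d$ of the correction equals $\dim C^1_p$ plus a constant, and its leading coefficient is positive. So an excess-dimensional component strictly increases the polynomial for large $d$, while expected dimension gives the equality. This also gives the equality characterization. If equality holds for all $p$, then $\dim C^1_p=2p-g-1$ for all $k\le p\le g-1$. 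Taking $p=k$ forces nonemptiness of $C^1_p$ only when $p\ge\lfloor (g+3)/2\rfloor$, hence $C$ has maximal gonality. When $g$ is even, taking $p=k$ gives $\dim C^1_k=2k-g-1=1$, i.e.\ $\dim W^1_k(C)=0$.

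The third step is the upper bound. I compare the Betti number with the degree or dimension of the locus $C^1_p$, which is largest exactly when $C$ is hyperelliptic. In that case $C^1_p\supseteq g^1_2+C_{p-2}$ has dimension $p-1$, and the general bound $\dim C^1_p\le p-1$ (Martens' theorem) holds with equality iff $C$ is hyperelliptic, for $p\le g-1$. The top-degree term of the polynomial in $d$ is then maximal in the hyperelliptic case, and Theorem \ref{thm:bettihyperelliptic} identifies the value $(g-p)\binom{d-g-1}{p}$. The corresponding weight-one bound follows from the difference formula. If equality holds for some $p$, the leading degrees must agree, so $\dim C^1_p=p-1$, and by Martens $C$ is hyperelliptic. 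Theorem \ref{thm:bettihyperelliptic} then gives equality for all $p$.

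The main obstacle is the monotonicity claim: for $d\gg0$, a larger or more singular $C^1_p$ gives a larger Betti number, and the comparison with the extremal cases is strict. I would first try a degree argument. The correction term grows like $c\cdot d^{\dim C^1_p}$ with $c>0$, since it counts sections restricted to $C^1_p$ of an ample-twisted bundle. This reduces both inequalities to comparing $\dim C^1_p$ with the bounds $2p-g-1\le\dim C^1_p\le p-1$. It also requires care in the boundary cases where the dimensions tie but multiplicities differ.
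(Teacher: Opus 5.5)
Your proposal is correct and is essentially the paper's argument. The quantity you call the correction is, for $d\gg 0$, exactly $\kappa_{r-p,1}(C,L)=h^0(C_p,\mathscr{G}_{p,\omega_C}\otimes N_{p,L})$, a polynomial in $d$ of degree $\dim C^1_p$ with positive leading coefficient; the paper compares these degrees directly in the weight-one row, using Theorem \ref{thm:bettinumber} when $\dim C^1_p=2p-g-1$, Martens plus Theorem \ref{thm:bettihyperelliptic} when $\dim C^1_p=p-1$, and the constant difference (\ref{eq:k_1-k_2}) to pass to weight two. Two small fixes: your worry about ties is empty, because ties are exactly these two equality cases (Theorem \ref{thm:bettinumber} needs only expected codimension, not reducedness), and in the gonality deduction you should use $\dim C^1_k=\dim W^1_k(C)+1\geq 1$, since mere nonemptiness would not rule out $k=(g+1)/2$ for odd $g$.
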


We briefly describe our approach to proving Theorem \ref{thm:main1}. For $k \leq p \leq g-1$, consider the evaluation map 
$$
\operatorname{ev}_{p, \omega_C} \colon H^0(C, \omega_C) \otimes \mathscr{O}_{C_p} \longrightarrow E_{p, \omega_C},
$$
of the tautological bundle $E_{p, \omega_C}$ on the symmetric product $C_p$, and let $\mathscr{F}_{p, \omega_C}$ be its kernel. By the methods developed by Ein--Lazarsfeld \cite{EL15} and Voisin \cite{Voisin02, Voisin05}, we have
$$
\kappa_{r-p-1, 2}(C, L) =  h^0(C_p, \mathscr{F}_{p, \omega_C} \otimes N_{p,L}). 
$$
By a well-known computation in Brill--Noether theory, the degeneracy locus of $\operatorname{ev}_{p, \omega_C}$ is exactly $C_p^1$. Assume that $C_p^1$ has the expected dimension. Then the kernel $\mathscr{F}_{p, \omega_C}$ of  $\operatorname{ev}_{p, \omega_C}$ is resolved by the Buchsbaum--Rim complex  $\operatorname{BR}_{p,\bullet}$. The condition $H^1(C, L \otimes \omega_C^{-1})=0$ allows us to get
$$
\kappa_{r-p-1, 2}(C, L) = \chi(\mathscr{F}_{p, \omega_C} \otimes N_{p,L})
$$
by higher cohomology vanishing for $\operatorname{BR}_{p,i} \otimes N_{p,L}$ (see Proposition \ref{prop:vanishing}).
We then reduce the problem to computing the alternating sum of the Euler characteristics $\chi(\operatorname{BR}_{p,i} \otimes N_{p,L})$. This can be done by Hirzebruch--Riemann--Roch theorem (see Proposition \ref{prop:eulercharacteristic}). Our method is not restricted to the case that $C$ has maximal gonality. More generally, if $W_m^1(C)$ has the expected dimension, then the formula in Theorem \ref{thm:main1} holds for $m \leq p \leq g-1$ (see Theorem \ref{thm:bettinumber}).

\medskip

At the boundary $p=g-1$, if $\dim C_{g-1}^1=g-3$ (equivalently, $C$ is not hyperelliptic by Martens theorem), then the Buchsbaum--Rim complex yields $\mathscr{F}_{g-1, \omega_C} = N_{g-1, \omega_C}^{-1}$. Thus this together with Theorem \ref{thm:bettihyperelliptic} gives the exact value of the first potentially nonzero graded Betti number of weight two under the weaker assumption $d \geq 2g+1$. The next theorem refines the aforementioned nonvanishing result of Green--Lazarsfeld \cite[Theorem 2]{GL88} by determining its exact value. 

\begin{theorem}\label{thm:main2}
Assume that $d=\deg L \geq 2g+1$. Then
$$
\kappa_{r-g, 2}(C, L) = \begin{cases}
\displaystyle {d - g - 1 \choose g-1} & \text{if $C$ is hyperelliptic} \\[10pt]
\displaystyle  {h^0(C, L \otimes \omega_C^{-1})+g-2 \choose g-1} & \text{if $C$ is not hyperelliptic}.
\end{cases}
$$
\end{theorem}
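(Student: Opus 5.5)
We treat the hyperelliptic and non-hyperelliptic cases separately, using the Ein--Lazarsfeld/Voisin identification quoted in the introduction with $p=g-1$:
$$
\kappa_{r-g,2}(C,L)=h^0(C_{g-1},\mathscr{F}_{g-1,\omega_C}\otimes N_{g-1,L}).
$$
This identification only uses $d\geq 2g+1$ (Green's duality $K_{r-g,2}(C,L)\cong K_{g-1,0}(C,\omega_C;L)^\vee$ together with the kernel-bundle description on $C_{g-1}$). So no condition on $H^1(C,L\otimes\omega_C^{-1})$ is needed for this step.

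\textbf{Hyperelliptic case.} Here we simply quote Theorem \ref{thm:bettihyperelliptic} with $p=g-1$. The claimed value $\binom{d-g-1}{g-1}$ is the upper bound $(g-p)\binom{d-g-1}{p}$ of Corollary \ref{cor:inequality} specialized to $p=g-1$. The only work is to check that the hyperelliptic formula is valid for all $d\geq 2g+1$ and not just for $d\gg0$. We can argue this directly: $L=\mathscr{O}(mg^1_2)\otimes B$, and the syzygies are computed from a rational normal scroll.

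\textbf{Non-hyperelliptic case.} By Martens' theorem, $\dim C_{g-1}^1=g-3$, which is the expected codimension $2$ for the degeneracy locus of the map
$$
\operatorname{ev}_{g-1,\omega_C}\colon H^0(\omega_C)\otimes\mathscr{O}\to E_{g-1,\omega_C},
$$
a map from a rank $g$ bundle to a rank $g-1$ bundle. The Buchsbaum--Rim (Eagon--Northcott type) complex is then exact. In this corank-one situation it shows that the kernel is the line bundle $\det E_{g-1,\omega_C}^{-1}=N_{g-1,\omega_C}^{-1}$; note that $\det E_{p,L}=N_{p,L}$ up to the $\delta$-twist convention. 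Hence
$$
\mathscr{F}_{g-1,\omega_C}\otimes N_{g-1,L}=N_{g-1,L\otimes\omega_C^{-1}}.
$$
For any line bundle $A$, $N_{p,A}$ is the descent of $A^{\boxtimes p}$ to $C_p$, so its sections are the symmetric tensors:
$$
H^0(C_p,N_{p,A})=\Sym^pH^0(C,A).
$$
With $A=L\otimes\omega_C^{-1}$, $h^0(A)=h$ and $p=g-1$, we get $\binom{h+g-2}{g-1}$, as claimed. This also covers the case $h=0$, giving $0$.

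\textbf{Main obstacle.} The delicate step is justifying that the kernel is exactly $N_{g-1,\omega_C}^{-1}$ with the correct normalization. One must pin down the conventions $\det E_{p,L}=N_{p,L}(-\delta)$ and $N_{p,L}=S_{p,L}(-\delta)$ so that the $\delta$-twists cancel. One must also use that $\operatorname{ev}$ is generically surjective, with cokernel supported on the codimension-$2$ locus $C^1_{g-1}$; this forces the kernel to be reflexive of rank one, hence equal to $\det$ of the source times $\det$ of the target inverse. I would check the identification first by the rank-one reflexive argument, and then confirm the twist by comparing first Chern classes.
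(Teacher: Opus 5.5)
Your route matches the paper's. In the hyperelliptic case you quote Theorem \ref{thm:bettihyperelliptic} at $p=g-1$; it is already stated for all $d\geq 2g+1$, so nothing extra needs checking. In the non-hyperelliptic case you use Martens' theorem and the corank-one identity $\mathscr{F}_{g-1,\omega_C}\cong\det E_{g-1,\omega_C}^{-1}=N_{g-1,\omega_C}^{-1}$. Your reflexive rank-one argument with the first Chern class comparison is a valid substitute for the Buchsbaum--Rim complex. It only needs $\operatorname{codim} C^1_{g-1}\geq 2$, so it also covers $g=3$, which the paper treats separately; there $C^1_2=\emptyset$, not of dimension $g-3$.

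However, the normalization you flag as the main obstacle is carried out wrongly, in two places whose errors happen to cancel.
\begin{itemize}
\item In the paper $N_{p,B}:=\det E_{p,B}$ by definition, so your convention ``$\det E_{p,L}=N_{p,L}(-\delta)$'' is false. Also $S_{p,B}=N_{p,B}(\delta_p)$, and it is $S_{p,B}$, not $N_{p,B}$, that is multiplicative in $B$. So with $A:=L\otimes\omega_C^{-1}$,
\[
\mathscr{F}_{g-1,\omega_C}\otimes N_{g-1,L}=S_{g-1,\omega_C}^{-1}(\delta_{g-1})\otimes S_{g-1,L}(-\delta_{g-1})=S_{g-1,A}=N_{g-1,A}(\delta_{g-1}),
\]
not $N_{g-1,A}$.
\item The descent of $A^{\boxtimes p}$ to $C_p$ is $S_{p,A}$, with $H^0(C_p,S_{p,A})=\Sym^pH^0(C,A)$; compare the case $j=0$ in the proof of Proposition \ref{prop:vanishing}. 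By contrast, $H^0(C_p,N_{p,A})=\wedge^pH^0(C,A)$.
\end{itemize}
Read literally, your chain would produce $\binom{h}{g-1}$. That is wrong: for a non-hyperelliptic genus-$4$ curve with $L=\omega_C^2$ it gives $4$ instead of $\kappa_{r-g,2}(C,L)=20$.

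The fix is to replace $N_{g-1,L\otimes\omega_C^{-1}}$ by $S_{g-1,L\otimes\omega_C^{-1}}$ throughout. Then $h^0=\binom{h+g-2}{g-1}$, exactly as in the paper.
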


This theorem shows that the condition $H^1(C, L \otimes \omega_C^{-1})=0$ in Theorem \ref{thm:main1} is genuine rather than an artifact of the proof. Even the first potentially nonzero weight-two graded Betti number may not be given by the formula of Theorem \ref{thm:main1}. For example, if $C$ is a general curve of genus $4$ and $L=\omega_C^2$, then $\kappa_{5,1}(C,L) = 20$, whereas the formula in Theorem \ref{thm:main1} gives $10$. This example, in particular, shows that the assumption $\deg L \geq 2g+k+1$ as in \cite[Theorem 1.1]{Kemeny20} and \cite[Theorem 1.1]{NP24+} is not sufficient for Theorem \ref{thm:main1}.

\medskip

Our results and methods have several further consequences. Applying Theorem \ref{thm:main2} to a general curve section of a rational surface with effective anticanonical divisor, we can determine the first nonzero graded Betti number of weight 2 (see Corollary \ref{cor:rationalsurface}). In the toric setting, related problems have been studied from a combinatorial perspective (see e.g., \cite{CCDL19, Lemmens18}). Our approach is entirely different and provides stronger results.
On the other hand, under some additional conditions, we also compute the next weight-two graded Betti number $\kappa_{r-g+1,2}(C,L)$ explicitly (see Theorem \ref{thm:kappa_{r-g+1,2}}), and consequently, we recover Agostini's result on Green--Lazarsfeld secant conjecture \cite[Main Theorem]{Agostini24} (see Corollary \ref{cor:Agostini}). 

\medskip

Finally, we turn to the Boij--S\"{o}derberg decomposition of $R(C, L)$ (see Subsection \ref{subsec:BS}). By Eisenbud--Schreyer \cite{ES09}, the Betti table of $R(C,L)$ is a linear combination of pure Betti tables, and the coefficients, called the Boij--S\"{o}derberg coefficients, are positive rational numbers. It is natural to ask how the Boij--S\"{o}derberg coefficients of $R(C, L)$ behave asymptotically as $d$ grows (cf. \cite[Problem 7.4]{EL12}). Erman's asymptotic purity result  \cite{Erman15} says that the distinguished Boij--S\"{o}derberg coefficient $c_g$ tends to $1$ and other Boij--S\"{o}derberg coefficients $c_{k-1}, \ldots, c_{g-1}$ tend to $0$ as $d \to \infty$ (cf. when $g=0$ or $1$, we have $c_g=1$ and all the other coefficients are exactly $0$).
Theorem \ref{thm:main2} computes the distinguished coefficient $c_g$ exactly (see Corollary \ref{cor:compBScoeff}). The remaining problem is to understand the other coefficients $c_{k-1}, \ldots, c_{g-1}$. The two extremal geometries of algebraic curves produce opposite monotonicity for the Boij--S\"{o}derberg coefficients: $c_{k-1}, \ldots, c_{g-1}$ decrease for hyperelliptic curves and increase for general curves. More generally, we show in the following theorem that the increasing behavior holds for a nonhyperelliptic curve when every maximal component of the Brill--Noether locus $W_p^1(C)$ is generically reduced for $k-1 \leq p \leq g+1$. Note that this Brill--Noether condition holds when $C$ is a general curve of prescribed gonality (see Remark \ref{rem:HBN}).

\begin{theorem}\label{thm:main3}
Assume that $d = \deg L \gg 0$. Let $c_{k-1}, c_k, \ldots, c_g$ be the Boij--S\"{o}derberg coefficients of $R(C,L)$. Then the following hold:
\begin{enumerate}
\item We have
$$
c_g=\begin{cases}
\displaystyle \frac{(d-2g+1)(d-2g+2)}{(d-g)(d-g+1)} & \text{if $C$ is hyperelliptic} \\[10pt]
\displaystyle \frac{(d-2g+1)!(d-2g+2)!}{(d-3g+2)!(d-g+1)!} & \text{if $C$ is not hyperelliptic}.
\end{cases}
$$
\item Assume that $C$ is hyperelliptic. Then 
$$
c_1 > c_2 > \cdots > c_{g-1}.
$$
\item Assume that $C$ is not hyperelliptic and every maximal irreducible component of the Brill--Noether locus $W_{p}^1(C)$ is generically reduced for each $k-1 \leq p \leq g+1$. Then
$$
c_{k-1} < c_k < \cdots < c_{g-1}.
$$
\end{enumerate}
\end{theorem}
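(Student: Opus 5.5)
The plan is to use the standard Boij--S\"oderberg setup for a Betti table with two rows. Set $R=R(C,L)$, viewed as a module over $S=\Sym H^0(C,L)$, with projective dimension $r-1$. Its Betti table has $\kappa_{0,0}=1$, $\kappa_{p,1}\neq 0$ for $1\le p\le r-k$, and $\kappa_{p,2}\neq 0$ for $r-g\le p\le r-1$. The pure diagrams in the decomposition are then indexed by the position $j$ of the jump from row $1$ to row $2$. Write $\pi_j$ for the pure table of degree type $(0,2,3,\dots,r-j,r-j+2,\dots,r+1)$: the last linear syzygy sits in homological degree $r-j$, and the row-$2$ entries begin at $r-j+1$, which corresponds to the diagonal position $r-j$. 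The index $j$ runs over $k-1\le j\le g$, so $c_g$ is the coefficient whose row-$2$ part starts exactly at $p=r-g$.

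The Eisenbud--Schreyer greedy algorithm gives the decomposition. Equivalently, the chain of degree sequences is $d^{(g)}<d^{(g-1)}<\dots<d^{(k-1)}$, and each coefficient is recovered successively from the corner entries. In particular, $c_g$ is determined by the single entry $\kappa_{r-g,2}$: no other pure table in the chain has a nonzero entry in position $(r-g,2)$. So $c_g=\kappa_{r-g,2}(C,L)/\beta_{r-g,2}(\pi_g)$, where $\pi_g$ is normalized so that $\beta_{0,0}=1$. The entries of $\pi_g$ come from the Herzog--K\"uhl formula, $\beta_i=\prod_{l\ne i}|d_l|/\prod_{l\neq 0,i}|d_l-d_i|$.

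For part (1), I substitute Theorem \ref{thm:main2}. With $d\gg 0$ we have $h^0(L\otimes\omega_C^{-1})=d-3g+3$, so in the nonhyperelliptic case $\kappa_{r-g,2}=\binom{d-2g+1}{g-1}$; in the hyperelliptic case it is $\binom{d-g-1}{g-1}$. I then simplify the ratio with the Herzog--K\"uhl value, where $r=d-g$ and the degree sequence is explicit, and expect it to give the stated factorial expressions. This is mechanical.

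Parts (2) and (3) are harder. Here I express $c_j$ through successive differences. For the two-row case I expect a closed-form recursion: $c_j$ is proportional to the "excess" row-$2$ entry at position $(r-j,2)$ left over after subtracting the contributions of $c_{j+1},\dots,c_g$. This excess can be written via $\kappa_{r-j,2}$ together with the alternating identity (\ref{eq:k_1-k_2}). For hyperelliptic curves, Theorem \ref{thm:bettihyperelliptic} gives all entries explicitly, so the $c_j$ are explicit rational functions of $d$. I would compare $c_j$ and $c_{j+1}$ by their leading asymptotics in $d$, expecting $c_j\sim a_j/d^{e_j}$. For general curves I would use the polynomial degrees from Ein--Lazarsfeld's Theorem C: the growth of $\kappa_{r-p,1}$ is governed by $\dim C^1_p$, i.e.\ by $\dim W^1_{p}$ plus a fiber contribution. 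Under the generic reducedness hypothesis, the leading coefficient of $\kappa_{r-p,1}$ becomes a degree of a degeneracy class computed from the Buchsbaum--Rim/Porteous count on the maximal components (as in Kemeny's leading coefficient, Remark \ref{rem:leadcoff}). The monotonicity then reduces to showing that the exponent $e_j$ decreases strictly in $j$ in case (3), and increases in case (2); if the exponents tie, I would compare the leading constants.

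The main obstacle is case (3). There I must check that the growth order of $c_j$ is strictly monotone under the hypothesis. This comes down to showing that $\dim C^1_{p}$ (with reducedness giving the exact leading term) makes the excess at column $r-j$ grow strictly faster than at column $r-j-1$. To do this, I would first compute $c_j$ asymptotically as (excess at $j$)$/\beta(\pi_j)$ with $\beta(\pi_j)\sim d^{j}$-type growth, and then invoke $\dim W^1_{p+1}\ge\dim W^1_p+1$ (add a base point) to get the strict inequality.
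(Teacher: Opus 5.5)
Your argument for part (1) is correct, and it takes a different route from the paper's. Since $\pi(\mathbf{d}^i)$ has weight-two entries exactly in columns $r-i,\dots,r-1$, only $\pi(\mathbf{d}^g)$ contributes in column $r-g$. Its entry there is $\kappa_{r-g,2}(\pi(\mathbf{d}^g))=\frac{(r+1)!}{(g-1)!\,(r-g+2)!}$. Dividing the two values from Theorem \ref{thm:main2} by this, with $r=d-g$ and $h^0(C,L\otimes\omega_C^{-1})=d-3g+3$, gives both formulas at once. The paper instead inverts the triangular system in the weight-one row, $c_i=(r-i+1)(\overline{\kappa}_{r-i-1,1}-2\overline{\kappa}_{r-i,1}+\overline{\kappa}_{r-i+1,1})$ (Proposition \ref{prop:compBScoeff}), and reads off $c_g$ in Corollary \ref{cor:compBScoeff}. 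That formulation is what it then reuses for (2) and (3).

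Your plan for (2), comparing leading asymptotics, fails as stated. Theorem \ref{thm:bettihyperelliptic} gives exactly $c_i=\frac{2(r-i+1)}{r(r+1)}$ for $1\le i\le g-1$. So all these coefficients have the same order and the same leading term $2/d$, and $c_i-c_{i+1}=\frac{2}{r(r+1)}$ is invisible at leading order. The exponents do not increase in case (2). You need the exact computation, which your data do permit.

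The genuine gap is in part (3). In the paper's notation,
$$c_i=\ell_{i+1}d^{\overline{a}_{i+1}}-2\ell_i d^{\overline{a}_i}+\ell_{i-1}d^{\overline{a}_{i-1}}+O(d^{\overline{a}_{i+1}-1}),\qquad \overline{a}_p=\dim C^1_p-p.$$
Your ``add a base point'' inequality only gives $\overline{a}_{p+1}\ge\overline{a}_p$, not strict inequality, and ties really occur. For a trigonal curve, $\dim W^1_p(C)=p-3$ for $3\le p\le g-1$: the trigonal pencil plus base points attains this, and Martens' bound caps it. Hence $\overline{a}_p=-2$ throughout, and $c_2,\dots,c_{g-2}$ all have order $d^{-2}$. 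For a general trigonal curve, the leading coefficient of $c_i$ is $(i-1)\ell_3$ for small $i$.

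So the growth order of $c_j$ is not strictly monotone. The whole content of (3) lies in the case you defer to ``compare the leading constants'': you must show $\ell_{i+2}-3\ell_{i+1}+3\ell_i-\ell_{i-1}>0$ on the terms of top exponent. Nothing in your plan reaches this. The Porteous class (\ref{eq:fundclass}) behind Remark \ref{rem:leadcoff} is valid only when $C^1_p$ has the expected dimension. A tie between levels $p$ and $p+1$ forces $C^1_p$ to have excess dimension, so Porteous says nothing about the constants you need. No dimension count alone can suffice either: hyperelliptic curves have weakly monotone (indeed constant) exponents too, yet the opposite monotonicity.

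The missing idea is the paper's analysis of maximal components:
\begin{enumerate}
\item Write $\ell_p=\sum_{P\in\maxirr(C^1_p)}\ell_p(P)$.
\item Attach to each maximal component $P$ its unique primitive ancestor $P'\subseteq C^1_m$ (Lemma \ref{lem:sepacompofC_m^1}). A general member of $P'$ is base point free, so $P'\times C_{p-m}\to P$ is birational (Proposition \ref{prop:primcomp}).
\item Use generic reducedness to get $\ell_p(P)={p\choose m}\ell_m(P')$.
\item The third difference then equals ${p-3\choose m-3}\ell_m(P')$ (Lemma \ref{lem:ell(P)}). This is positive exactly because $m\ge k\ge 3$ for nonhyperelliptic $C$; it vanishes for $m=2$, consistent with (2).
\end{enumerate}

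Finally, your weight-two ``excess'' bookkeeping makes this harder. $\kappa_{r-j,2}$ is dominated by the explicit term in (\ref{eq:k_1-k_2}), so the information determining $c_j$ sits in heavily cancelling lower-order terms. The weight-one row isolates it.
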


To prove Theorem \ref{thm:main3}, we measure the contribution of each maximal component of $W_{p}^1(C)$ to $\kappa_{r-p,1}(C,L)$, and compare these contributions as $p$ varies. The Brill--Noether loci governing the graded Betti numbers also control the Boij--S\"{o}derberg coefficients. 

\medskip

This paper is organized as follows:
In Section \ref{sec:prelim}, we collect the necessary background and fix notation. We prove Theorems \ref{thm:main1} and \ref{thm:main2} in Section \ref{sec:betti} and Theorem \ref{thm:main3} in Section \ref{sec:BS}.

\subsection*{Acknowledgements}
We are grateful to Gavril Farkas and Frank-Olaf Schreyer for insightful and inspiring discussions. We used ChatGPT (GPT-5.6 Sol Pro, OpenAI) to assist in managing the combinatorial computations arising after applying Hirzebruch--Riemann--Roch theorem in the proof of Theorem \ref{thm:main1}.

\section{Preliminaries}\label{sec:prelim}
\noindent In this section, we review syzygies of algebraic varieties, symmetric products of algebraic curves, Brill--Noether theory, and Boij--S\"{o}derberg theory. Throughout the paper, $C$ is a smooth projective complex curve of genus $g \geq 2$ and gonality $k$, and $L$ is a very ample line bundle on $C$. We set $d:=\deg L$ and $r:=r(L)=h^0(C, L)-1$. When $L$ is nonspecial, i.e., $H^1(C, L)=0$, we have $r=d-g$. 

\subsection{Koszul cohomology}\label{subsec:syzygies}
Let $X$ be a smooth projective complex variety, $B$ be a line bundle on $X$, and $H$ be a globally generated line bundle on $X$. Then the section module 
$$
R(X, B; H):= \bigoplus_{m \geq 0} H^0(X, B \otimes H^m)
$$
over $S:=\Sym H^0(X, H)$ admits a minimal graded free resolution
$$
0 \longleftarrow R(X, B; H) \longleftarrow E_0 \longleftarrow E_1 \longleftarrow \cdots \longleftarrow E_p \longleftarrow E_{p+1} \longleftarrow \cdots,
$$
where 
$$
E_p = \bigoplus_{q \geq 0} K_{p,q}(X, B; H) \otimes_{\nC} S(-p-q).
$$
The Koszul cohomology group $K_{p,q}(X, B; H)$ can be regarded as the space of $p$-th syzygies of weight $q$ of $R(X, B; H)$. We write
$$
\kappa_{p,q}(X, B; H):= \dim K_{p,q}(X, B; H)
$$
for the graded Betti number. We omit $B$ when $B= \sO_X$. It is well-known that 
$$
K_{p,q}(X, B;H) = \coker\big( \wedge^{p+1} H^0(X, H) \otimes H^0(X, B \otimes H^{q-1}) \longrightarrow H^0(X, \wedge^p M_H \otimes B \otimes H^q)\big),
$$
where $M_H$ is the kernel of the evaluation map $H^0(X, H) \otimes \sO_X \to H$ (see  \cite[Section 2]{AN10} or \cite[Section 3]{EL12}). 
Now, suppose that $X=C$ is a curve and $L$ is nonspecial with $H^0(C, \omega_C \otimes L^{-1})=0$. Notice that $\rank M_L = r$ and $\det M_L = L^{-1}$. Then 
$$
K_{p,2}(C, L) = H^1(C, \wedge^{p+1} M_L \otimes L) = H^0(C, \wedge^{r-p-1}M_L  \otimes \omega_C)^{\vee} = K_{r-p-1, 0}(C, \omega_C; L)^{\vee}~\text{for $0 \le p \le r-1$}.
$$
By the duality theorem (\cite[Theorem 2.c.6]{Green84}), we have 
$$
K_{p,1}(C, L) = K_{r-p-1, 1}(C, \omega_C; L)^{\vee}~~\text{ for $p \geq 0$}.
$$
By Riemann--Roch theorem, we find
\begin{align}\label{eq:k_1-k_2}
\begin{split}
\kappa_{p+1,1}(C, L) - \kappa_{p,2}(C, L) 
&=\chi(\wedge^{p+1} M_L \otimes L) - \dim \wedge^{p+2}H^0(C, L)\\
&= (p+1) { d - g \choose p+1} \left( \frac{d+1-g}{p+2} - \frac{d}{d-g} \right)~~\text{ for $0 \le p \le r-1$}.
\end{split}
\end{align}
When the gonality conjecture holds, together with Green's $(2g+1+p)$-theorem, we see that the Betti table of $R(C,L)$ is as follows:
\begin{center}
\texttt{ \begin{tabular}{c|cccccccccc}
         & $0$ & $1$ & $\cdots$ & $r-g-1$ & $r-g$ & $\cdots$ & $r-k$ & $r-k+1$ & $\cdots$ & $r-1$ \\ \hline
    $0$    & $1$ & $-$ & $\cdots$ & $-$ & $-$ & $\cdots$ & $-$ & $-$ & $\cdots$ & $-$ \\
    $1$   & $-$ & $*$ & $\cdots$ & $*$ & $*$ & $\cdots$ & $*$ & $-$ & $\cdots$ & $-$ \\
    $2$   & $-$ & $-$ & $\cdots$ & $-$ & $*$ & $\cdots$ & $*$ & $*$ & $\cdots$ & $*$ \\
\end{tabular}}
\end{center}
Here $-$ indicates a zero entry, and $*$ indicates a possibly nonzero entry. By (\ref{eq:k_1-k_2}), we can determine $\kappa_{p+1,1}(C,L)$ for $0 \leq p \leq r-g-1$ and $\kappa_{p,2}(C,L)$ for $r-k \leq p \leq r-1$. Therefore, the only undetermined graded Betti numbers are $\kappa_{r-p,1}(C,L) = \kappa_{p-1,1}(C, \omega_C; L)$ and $\kappa_{r-p-1,2}(C,L) = \kappa_{p,0}(C, \omega_C; L)$ for $k \leq p \leq g-1$. We refer to \cite{AN10, EL12, EL26} for further background.

\subsection{Symmetric products of algebraic curves}\label{subsec:symmetric}
For an integer $p \geq 1$, let $C_p$ be the $p$-th symmetric product of $C$, which is a smooth projective complex variety of dimension $p$. For $p,m \geq 1$, there is an addition map 
$$
\sigma_{p,m} \colon C_p \times C_m \longrightarrow C_{p+m},~~(\xi, \eta) \longmapsto \xi + \eta,
$$
which is a finite morphism. Consider the incidence divisor
$$
D_{p,m}:=\{ (\xi, \eta) \in C_p \times C_m \mid \Supp(\xi) \cap \Supp(\eta) \neq \emptyset\} \subseteq C_p \times C_m.
$$
When $m=1$, we have $C_{p-1} \times C \cong D_{p,1} \subseteq C_{p} \times C$, and $\sigma_{p-1,1} = \pr_1|_{D_{p,1}}$, where $\pr_1 \colon C_p \times C_1 \to C_p$ is the first projection. For a line bundle $B$ on $C$, define
$$
E_{p,B}:=\sigma_{p-1,1,*} (\sO_{C_{p-1}} \boxtimes B)~~\text{ and }~~N_{p,B}:=\det E_{p, B}.
$$
Note that $E_{p,B}$ is a vector bundle of rank $p$ and $H^0(C_p, E_{p,B}) = H^0(C, B)$. It is well-known that $E_{p, \omega_C}=\Omega_{C_p}^1$ is the cotangent bundle and $N_{p, \omega_C}=\omega_{C_p}$ is the canonical line bundle.
There is a divisor $\delta_p$ on $C_p$ with $\sO_{C_p}(-\delta_p) = N_{p, \sO_C}$. By \cite[Lemma 2.3]{NP24+} (see also \cite[Lemma 3.1]{Agostini24}), 
$$
\sigma_{p,m}^* \sO_{C_{p+m}}(-\delta_{p+m}) = (\sO_{C_p}(-\delta_p) \boxtimes \sO_{C_m}(-\delta_m))(-D_{p,m}).
$$
In particular, $\sigma_{p-1,1}^*\sO_{C_p}(-\delta_p) = (\sO_{C_{p-1}}(-\delta_{p-1}) \boxtimes \sO_C)(-D_{p-1,1})$. We set 
$$
S_{p,B}:=N_{p,B}(\delta_p). 
$$
Note that $S_{p,B} \otimes S_{p, B'} = S_{p, B \otimes B'}$ for a line bundle $B'$ on $C$ and $\sigma_{p,m}^* S_{p+m, B} = S_{p, B} \boxtimes S_{m, B}$. Now, we define two divisor classes on $C_p$ following \cite[Section VII.5]{ACGH85}. For a point $z \in C$, consider the effective divisor 
$$
X_{p,z}:=z + C_{p-1}=\{ \xi \in C_p \mid z \in \Supp(\xi)\} \subseteq C_p.
$$
Note that $\sO_{C_p}(X_{p,z}) = S_{p, \sO_C(z)}$. Define $x_p:=[X_{p,z}] \in H^2(C_p, \mathbf{Z})$ and $\theta_p \in H^2(C_p, \mathbf{Z})$ to be the divisor class of the pull-back of the theta divisor of $\Pic_p(C)$ via the Abel--Jacobi map $u_p \colon C_p \to \Pic_p(C)$. We have
\begin{equation}\label{eq:xtheta}
x_p^{p-j} \theta_p^j = \frac{g!}{(g-j)!} ~~\text{ for $0 \leq j \leq \min\{p, g\}$}.
\end{equation}
Next, consider the evaluation map
$$
\ev_{p, B} \colon H^0(C, B) \otimes \sO_{C_p} \longrightarrow E_{p, B}, 
$$
and let $\mathscr{F}_{p,B}$ and $\mathscr{G}_{p,B}$ be its kernel and cokernel, respectively. Note that $B$ is $(p-1)$-very ample if and only if $\ev_{p, B}$ is surjective. In this case, the kernel $M_{p, B}:=\mathscr{F}_{p,B}$ of $\ev_{p,B}$ is a vector bundle. When $p=1$, this agrees with the usual kernel bundle $M_B$. 
If $B$ is $(p-1)$-very ample, then \cite[Lemma 2.4]{NP24+} gives
$$
R^i \pr_{1,*} (\sO_{C_p} \boxtimes N_{m,B})(-D_{p,m}) = \wedge^{m-i} M_{p,B} \otimes S^i H^1(C, B)~~\text{for $i \geq 0$}, 
$$
where $\pr_1 \colon C_p \times C_m \to C_p$ is the first projection. Assume that $H^0(C, B \otimes L^{-1})=0$ and $H^1(C, L)=0$. Then 
$$
K_{p,0}(C, B; L) = H^0(C, \wedge^p M_L \otimes B) 
= H^0(C_p, \mathscr{F}_{p, B} \otimes N_{p, L}).
$$
When $B$ is $(p-1)$-very ample, we have
$$
K_{p-1,1}(C, B; L) = H^1(C_p, M_{p,B} \otimes N_{p,L})
$$
(see \cite[Lemma 1.1]{EL15}). 
When $d=\deg L \gg 0$ so that the line bundle $N_{p,L}$ is sufficiently positive, we also have
$$
K_{p-1,1}(C, B; L) = H^0(C_p, \mathscr{G}_{p,B} \otimes N_{p,L}).
$$
In this case, we see that $\kappa_{p,0}(C, B; L)$ and $\kappa_{p-1,1}(C, B; L)$ are polynomials in $d$ (see \cite[Theorem C]{EL15}). 
We refer to \cite{Agostini24, ACGH85, EL15, ENP20, NP24+} for further details.

\subsection{Brill--Noether loci}\label{subsec:BN}
For integers $p \geq s \geq 1$, let
$$
C_p^s:=\{ \xi \in C_p \mid \rank (du_p)^{\vee}(\xi) \leq p-s\}
$$
be the degeneracy locus with the determinantal scheme structure, where
$$
(du_p)^{\vee} \colon u_p^* \Omega_{\Pic_p(C)}^1 \longrightarrow \Omega_{C_p}^1
$$
is the dual of the differential map of the Abel--Jacobi map $u_p$, which can be naturally identified with the evaluation map $\ev_{p, \omega_C} \colon H^0(C, \omega_C) \otimes \sO_{C_p} \to E_{p, \omega_C}$. As a set,
$$
C_p^s=\{ \xi \in C_p \mid h^0(C, \sO_C(\xi)) \geq s+1\}.
$$
One can also give a determinantal scheme structure on the Brill--Noether locus
$$
W_p^s(C):=\{ M \in \Pic_p(C) \mid r(M)=h^0(C, M)-1 \geq s\}
$$
(see \cite[Section IV.3]{ACGH85}). Then $C_p^s=u_p^{-1}(W_p^s(C))$ is the scheme-theoretic inverse image by \cite[Proposition IV.3.4]{ACGH85}. When $g-p+s \geq 0$, \cite[Lemma IV.1.7]{ACGH85} states that no component of $C_p^s$ is contained in $C_p^{s+1}$. Thus 
$$
\dim C_p^s = \dim W_p^s(C)+s,
$$
and the restricted Abel--Jacobi map $u_p \colon C_p^s \to W_p^s(C)$ is a generically $\nP^s$-fibration. In this paper, we focus ONLY on the case $s=1$. Note that $W_p^1(C)=\emptyset$ for $1 \leq p \leq k-1$. For $k \leq p \leq g$, we have $\dim W_{p+1}^1(C) \geq \dim W_p^1(C) +1$. But \cite[Theorem 1]{FHL84} implies that $\dim W_{p+1}^1(C) \leq \dim W_p^1(C) + 2$, so $\dim W_{p+1}^1(C) - \dim W_p^1(C) = 1$ or $2$. The expected dimension of $W_p^1(C)$ is $2p-g-2$.
In any event, $W_{g+1}^1(C)$ and $W_g^1(C)$ have expected dimensions, i.e., $\dim W_{g+1}^1(C) = g$ and $\dim W_g^1(C) = g-2$. Note that $W_p^1(C)$ has the expected dimension $2p-g-2$ for every $k \leq p \leq g-1$ if and only if $C$ has maximal gonality $k=\lfloor (g+3)/2 \rfloor$ and $\dim W_k^1(C)=0$ when $g$ is even. 
As $\dim W_p^1(C) \leq p-2$, Martens theorem (see e.g., \cite[Theorem IV.5.1]{ACGH85}) shows that $\dim W_p^1(C) = p-2$ for some $k \leq p \leq g-1$ if and only if $C$ is a hyperelliptic curve. In this case, $\dim W_p^1(C) = p-2$ for all $k \leq p \leq g-1$. 
We refer to \cite{ACGH85} for more details on the classical aspects of Brill--Noether theory.

\subsection{Boij--S\"{o}derberg coefficients}\label{subsec:BS}
Eisenbud--Schreyer \cite{ES09} proved that the Betti table of a finitely generated graded Cohen--Macaulay module over a polynomial ring is a positive rational combination of pure Betti tables, which is called the Boij--S\"{o}derberg decomposition. For an increasing integer sequence $\textbf{d}=(d_0, d_1, \ldots, d_{e-1}, d_{e})$, let $\pi(\textbf{d})$ be the normalized pure Betti table whose nonzero entries are $\kappa_{0,d_0}(\pi(\textbf{d}))=1$ and
$$
\kappa_{p,d_p-p}(\pi(\textbf{d})):=\prod_{\substack{k \geq 1 \\ k \neq p}} \frac{d_k-d_0}{|d_k - d_p|}~~\text{ for $1 \leq p \leq e$}.
$$
Assume that $d=\deg L$ is large enough so that the gonality conjecture holds by \cite[Theorem 1.1]{NP24+}. Let $\pi(R(C,L))$ be the Betti table of the section ring $R(C,L)$. The Boij--S\"{o}derberg decomposition of $R(C,L)$ can be written as
$$
\pi(R(C,L)) = \sum_{i=k-1}^{g} c_i \pi(\textbf{d}^i),
$$
where $c_i$ are positive rational numbers and
$$
\textbf{d}^i:=(0,2,\ldots, r-i, r-i+2, \ldots, r+1)~~\text{ for $k-1 \leq i \leq g$}.
$$
Here $c_{k-1}, c_k, \ldots, c_g$ are the Boij--S\"{o}derberg coefficients of $R(C,L)$. From $\kappa_{0,0}(C,L)=1$, we get $\sum_{i=k-1}^{g} c_i = 1$. Since
$$
\kappa_{r-p,1}(\pi(\textbf{d}^i)) = \frac{(p-i)(r-p)}{r-i+1} {r+1 \choose p}~~\text{ for $i+1 \leq p \leq r-1$},
$$
we obtain
\begin{equation}\label{eq:BSforcurves}
\kappa_{r-p,1}(C,L) = \sum_{i=k-1}^{p-1} c_i \frac{(p-i)(r-p)}{r-i+1} {r+1 \choose p}~~\text{ for $k \leq p \leq g+1$}.
\end{equation}
We refer to \cite[Lecture 2]{EL26} for an overview of Boij--S\"{o}derberg theory.

\section{Graded Betti numbers of algebraic curves}\label{sec:betti}
\noindent In this section, we prove Theorems \ref{thm:main1} and \ref{thm:main2}, and present several applications. For Theorem \ref{thm:main1}, we follow the strategy sketched in the introduction. We start by showing a necessary cohomology vanishing result on $C_p$. 

\begin{proposition}\label{prop:vanishing}
Let $A, B$ be line bundles on $C$ such that $H^1(C, A)=0$. For any integer $p \geq 1$, we have
$$
H^i(C_p, S^j E_{p, B}^{\vee} \otimes S_{p, A})=0~~\text{ for $i>0$ and $0 \leq j \leq i-1$}. 
$$
\end{proposition}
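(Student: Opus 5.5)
The plan is to express $S^j E_{p,B}^{\vee} \otimes S_{p,A}$ through pushforwards along the finite addition maps, where each factor of $E_{p,B}^{\vee}$ "peels off" one point of $C$, and then compute cohomology on a product. The base case $j=0$ is the statement $H^i(C_p, S_{p,A})=0$ for $i>0$. I would prove it by pulling back along the quotient map $\pi \colon C^p \to C_p$, using $\pi^* S_{p,A} = A^{\boxtimes p}$ and $S_{p,A} = (\pi_* A^{\boxtimes p})^{\mathfrak S_p}$. This gives
$$H^i(C_p,S_{p,A}) = \big(H^i(C^p, A^{\boxtimes p})\big)^{\mathfrak S_p}.$$
By Künneth, every summand of $H^i(C^p,A^{\boxtimes p})$ with $i>0$ contains a factor $H^1(C,A)=0$, so the whole group vanishes.

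For one dual factor, use the finite flat map $\sigma=\sigma_{p-1,1}\colon C_{p-1}\times C \to C_p$ together with Grothendieck duality for finite morphisms: $(\sigma_*\mathscr{E})^{\vee} = \sigma_*(\mathscr{E}^{\vee}\otimes \omega_{\sigma})$. The branch behaviour of $\sigma$ is recorded by $\sigma^*\sO_{C_p}(-\delta_p) = (\sO(-\delta_{p-1})\boxtimes \sO_C)(-D_{p-1,1})$, so $\omega_\sigma$ should be $\sO(D_{p-1,1})$ up to the $\delta$-twists. Combined with the projection formula and $\sigma^* S_{p,A} = S_{p-1,A}\boxtimes A$, I expect an identity of the form
$$E_{p,B}^\vee \otimes S_{p,A} \cong \sigma_{*}\big( (S_{p-1,A} \boxtimes (A\otimes B^{-1}))(D_{p-1,1})\big).$$
Iterating this, and using characteristic $0$ to realize $S^jE^\vee$ as a direct summand of $(E^\vee)^{\otimes j}$ (or directly via $\sigma_{p-j,j}$ and symmetrizing), I would write $S^jE_{p,B}^\vee\otimes S_{p,A}$ as a direct summand of a pushforward along a finite map from $C_{p-j}\times C^j$ (or $C_{p-j}\times C_j$). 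The sheaf being pushed forward has the shape
$$\big(S_{p-j,A}\boxtimes \mathscr{L}_j\big)(\text{incidence divisors}),$$
where $\mathscr{L}_j$ lives on the $j$ extra points. Since the maps are finite, the cohomology of this sheaf equals the cohomology we want.

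The remaining step is a Leray argument for the projection to $C^j$ (or $C_j$). Its fibres are copies of $C_{p-j}$, and there the sheaf restricts to $S_{p-j,A}$ twisted by the incidence divisor with the $j$ fixed points, that is, to a sheaf of the form $S_{p-j,A(z_1+\cdots+z_j)}$-type. Since $H^1(C,A(z_1+\cdots+z_j))=0$ still holds, the base case applied fibrewise kills all higher direct images. What remains is the cohomology of a sheaf on a $j$-dimensional base, which vanishes in degrees $i>j$. This is exactly the range $j\le i-1$ in the statement.

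The step I expect to be the main obstacle is making the duality identity precise, including the exact twist by $D_{p-1,1}$ and $\delta$. This bookkeeping decides whether the fibrewise line bundle really has the form $S_{p-j,A'}$ with $H^1(C,A')=0$. If the incidence twist instead turned out to be negative, I would first try to fall back on the relative Koszul description of $R^i\pr_{1,*}$ from \cite[Lemma 2.4]{NP24+}, applied with $B$ replaced by $A$, to control the higher direct images.
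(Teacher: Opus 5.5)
\textbf{Verdict.} Your base case and your $j=1$ step are sound, but the passage to $j\geq 2$ has a genuine gap: the structure you claim for $S^jE_{p,B}^\vee\otimes S_{p,A}$ does not hold.

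\textbf{The case $j=1$.} The twist you flagged as the main risk comes out exactly as you guessed. Since $\omega_{C_p}=N_{p,\omega_C}=S_{p,\omega_C}(-\delta_p)$, the formula for $\sigma^*\sO_{C_p}(-\delta_p)$ gives $\sigma^*\omega_{C_p}=\omega_{C_{p-1}\times C}(-D_{p-1,1})$. Hence $\omega_\sigma=\sO(D_{p-1,1})$ and
$$E_{p,B}^\vee\otimes S_{p,A}\cong\sigma_*\big((S_{p-1,A}\boxtimes A\otimes B^{-1})(D_{p-1,1})\big),$$
with fibrewise restriction $S_{p-1,A(x)}$. Leray over $C$ then gives vanishing for $i\geq 2$.

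\textbf{The gap at $j\geq 2$.} To iterate you must compute $E^\vee_{p,B}\otimes\sigma_*\mathscr{M}\cong\sigma_*(\sigma^*E^\vee_{p,B}\otimes\mathscr{M})$. Here $\sigma^*E^\vee_{p,B}$ is not $E^\vee_{p-1,B}\boxtimes\sO_C$; it is an extension
$$0\longrightarrow E^\vee_{p-1,B}\boxtimes\sO_C\longrightarrow\sigma^*E^\vee_{p,B}\longrightarrow(\sO_{C_{p-1}}\boxtimes B^{-1})(D_{p-1,1})\longrightarrow 0.$$
The quotient term records the new point colliding with the point already peeled off. It does not come from $C_{p-2}\times C^2$.

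Consequently neither of the two structures you propose can hold.
\begin{itemize}
\item $(E^\vee_{p,B})^{\otimes j}\otimes S_{p,A}$ is not the pushforward of a single line bundle from $C_{p-j}\times C^j$. The ranks are $p^j$ versus $p!/(p-j)!$.
\item $S^jE^\vee_{p,B}\otimes S_{p,A}$ is not a direct summand of such a pushforward along $\sigma_{p-j,j}$. The ranks are ${p+j-1 \choose j}$ versus ${p \choose j}$.
\end{itemize}
Both rank comparisons fail for $j\geq 2$. The terms you lose are collision terms such as $(S_{p-1,A}\boxtimes A\otimes B^{-j})(jD_{p-1,1})$, which carry incidence multiplicity $>1$.

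\textbf{How to repair it.} You need exactly that exact sequence, and you must work with the induced filtration and long exact sequences rather than a direct-summand decomposition. Each graded piece is then a finite pushforward of a line bundle on some $C_{p-\ell'}\times C^{\ell'}$ with $\ell'\leq j$. On the fibres of the projection to $C^{\ell'}$ it restricts to $S_{p-\ell',A(a_1z_1+\cdots+a_{\ell'}z_{\ell'})}$ with all $a_k\geq 1$. So your count by the dimension of the base does survive, piece by piece.

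This repaired argument is essentially the paper's proof. The paper pulls back along $\sigma_{p-1,1}$, using that $\sO_{C_p}$ is a direct summand of $\sigma_{p-1,1,*}\sO$. It filters $\sigma^*S^jE^\vee_{p,B}$ by $S^\ell E^\vee_{p-1,B}\boxtimes B^{-(j-\ell)}$ twisted by $(j-\ell)D_{p-1,1}$, and inducts on $p$.
\begin{itemize}
\item The piece $\ell=j$ is handled by K\"unneth.
\item The pieces $\ell<j$ are handled by Leray over $C$. This uses $H^1(C,A((j-\ell)x))=0$ and $\ell\leq i-2$, so the induction hypothesis kills both $H^i$ and $H^{i-1}$ on the fibres.
\end{itemize}
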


\begin{proof}
We proceed by induction on $p$. If $p=1$, then the assertion is $H^1(C, A)=0$, which is our assumption. Consider the case that $p \geq 2$. When $j=0$, we have 
$$
H^i(C_p, S_{p, A})= S^{p-i} H^0(C, A) \otimes \wedge^i H^1(C, A) = 0~~\text{ for $i>0$},
$$
which proves the assertion. We henceforth assume that $1 \leq j \leq i-1$ (and hence $i \geq 2$). Consider the finite morphism $\sigma_{p-1,1} \colon C_{p-1} \times C \to C_p$ given by $(\xi, x) \mapsto \xi+x$. As $\sO_{C_p}$ is a direct summand of $\sigma_{p-1,1,*} \sO_{C_{p-1} \times C}$, it is enough to show that
$$
H^i(C_{p-1} \times C, \sigma_{p-1,1}^* S^j E_{p,B}^{\vee} \otimes (S_{p-1, A} \boxtimes A))=0~~\text{ for $i \geq 2$ and $1 \leq j \leq i-1$}. 
$$
We have a short exact sequence
$$
0 \longrightarrow E_{p-1, B}^{\vee} \boxtimes \sO_C \longrightarrow \sigma_{p-1,1}^* E_{p, B}^{\vee} \longrightarrow (\sO_{C_{p-1}} \boxtimes B ^ \vee )(D_{p-1,1}) \longrightarrow 0. 
$$
It suffices to check that
$$
H^i(C_{p-1} \times C, (S^{\ell} E_{p-1, B}^{\vee} \otimes S_{p-1, A} \boxtimes B^{-(j - \ell)} \otimes A)((j-\ell)D_{p-1,1}))=0
$$
for $i \geq 2, ~1 \leq j \leq i-1, ~0 \leq \ell \leq j$. When $\ell=j$, we have
\begin{align*}
& H^i(C_{p-1} \times C, S^j E_{p-1,B}^{\vee} \otimes S_{p-1, A} \boxtimes A)\\
& = H^i(C_{p-1}, S^j E_{p-1, B}^{\vee} \otimes S_{p-1, A}) \otimes H^0(C, A) = 0
\end{align*}
for $i  \geq 2$ and $ 1\leq j \leq i-1$ by K\"{u}nneth formula and induction hypothesis. For $0 \leq \ell \leq j-1$, considering the Leray spectral sequence for the second projection $\pr_2 \colon C_{p-1} \times C \to C$, we reduce the problem to 
$$
H^i(C_{p-1}, S^{\ell} E_{p-1, B}^{\vee} \otimes S_{p-1, A((j-\ell)x)})=H^{i-1}(C_{p-1}, S^{\ell} E_{p-1,B}^{\vee} \otimes S_{p-1, A((j-\ell)x)})=0
$$
for $i \geq 2$ and $1 \leq j \leq i-1$. Since $0 \leq \ell \leq i-2$ and $H^1(C, A((j-\ell)x))=0$, these cohomology vanishings follow from the induction hypothesis.
\end{proof}

This proposition may be viewed as a counterpart to Rathmann's theorem \cite[Theorem 3.1]{Rathmann16+}, which essentially establishes cohomology vanishing for exterior powers of $M_{p+1,B}$ and implies that  the gonality conjecture holds when $\deg L \geq 4g-3$ (see \cite[Theorems 1.1 and 1.2]{Rathmann16+}). We can recover this result using our proposition. 

\begin{corollary}[Rathmann]\label{cor:Rathmann}
Let $B$ be a $p$-very ample line bundle on $C$ with $H^1(C, L)=H^1(C, L \otimes B^{-1})=0$. Then 
$$
H^i(C_{p+1}, \wedge^j M_{p+1,B} \otimes N_{p+1,L})=0~~\text{ for $i>0$ and $j \geq 0$}.
$$
In particular, $K_{p,1}(C,B;L)=0$. 
\end{corollary}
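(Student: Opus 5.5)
We would prove the corollary by resolving $\wedge^j M_{p+1,B}$ by bundles of the form $S^\ell E_{p+1,B}^{\vee}$ twisted by trivial vector spaces, and then feeding this resolution into Proposition \ref{prop:vanishing}. Set $V:=H^0(C,B)$ and $E:=E_{p+1,B}$. Since $B$ is $p$-very ample, $\ev_{p+1,B}$ is surjective. Hence $M:=M_{p+1,B}$ is a vector bundle of rank $m:=\dim V-(p+1)$. It sits in $0\to M\to V\otimes\sO_{C_{p+1}}\to E\to 0$, and $\det M=(\det E)^{-1}=N_{p+1,B}^{-1}$.

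\textbf{Step 1: resolving $\wedge^j M$.} Dualize the sequence to get $0\to E^\vee\to V^\vee\otimes\sO\to M^\vee\to 0$. Set $s:=m-j$; we may assume $0\le j\le m$. The standard Koszul-type resolution of an exterior power of a quotient gives an exact complex
$$
0\to S^{s}E^\vee\to V^\vee\otimes S^{s-1}E^\vee\to\cdots\to\wedge^{s-1}V^\vee\otimes E^\vee\to\wedge^{s}V^\vee\otimes\sO\to\wedge^{s}M^\vee\to 0 .
$$
Now use $\wedge^jM\cong\wedge^{s}M^\vee\otimes\det M$ and twist by $N_{p+1,L}$. The key observation is that
$$
N_{p+1,L}\otimes N_{p+1,B}^{-1}=S_{p+1,L}\otimes S_{p+1,B}^{-1}=S_{p+1,A},\qquad A:=L\otimes B^{-1},
$$
because the $\delta_{p+1}$-twists cancel and $S$ is multiplicative in the line bundle. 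Thus $\wedge^jM\otimes N_{p+1,L}$ is resolved on the left by the terms $\wedge^{s-\ell}V^\vee\otimes S^\ell E^\vee\otimes S_{p+1,A}$ for $0\le\ell\le s$. Here the term with index $\ell$ sits $\ell$ steps away from the target.

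\textbf{Step 2: cohomology chase.} We split the long exact complex into short exact sequences and chase cohomology in the usual way. Then $H^i(\wedge^jM\otimes N_{p+1,L})=0$ follows once
$$
H^{i+\ell}(C_{p+1},S^\ell E^\vee\otimes S_{p+1,A})=0\quad\text{for all }0\le\ell\le s .
$$
The hypothesis $H^1(C,A)=H^1(C,L\otimes B^{-1})=0$ puts us exactly in the setting of Proposition \ref{prop:vanishing}, applied with $p+1$ in place of $p$. That proposition gives vanishing of $H^{i'}(S^{\ell}E^\vee\otimes S_{p+1,A})$ whenever $i'>0$ and $\ell\le i'-1$. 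With $i'=i+\ell$ and $i\ge1$, both conditions hold. This proves the vanishing for all $i>0$ and all $j$; the case $j>m$ is trivial.

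\textbf{Step 3: the Koszul group.} We take $j=1$ and use the identification $K_{p,1}(C,B;L)=H^1(C_{p+1},M_{p+1,B}\otimes N_{p+1,L})$ recalled in Subsection \ref{subsec:symmetric} from \cite[Lemma 1.1]{EL15}, applied with $p+1$ in place of $p$. That identification requires $B$ to be $p$-very ample and uses the nonspeciality $H^1(C,L)=0$, which is where that hypothesis enters. The vanishing from Step 2 then gives $K_{p,1}(C,B;L)=0$.

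The only step requiring care is Step 1. One must set up the correct resolution, using symmetric powers of $E^\vee$ rather than of $E$, so that Proposition \ref{prop:vanishing} applies. One must also check the index bookkeeping: the term with index $\ell$ contributes $H^{i+\ell}$, which is exactly the shift allowed by the condition $j\le i-1$ in the proposition.
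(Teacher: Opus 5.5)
Your proposal is correct and matches the paper's proof. The paper also resolves $\wedge^j M_{p+1,B}\otimes N_{p+1,B}=\wedge^{r_B-j}M_{p+1,B}^{\vee}$ by the complex with terms $\wedge^{r_B-j-\ell}H^0(C,B)^{\vee}\otimes S^{\ell}E_{p+1,B}^{\vee}$, twists using $N_{p+1,L}\otimes N_{p+1,B}^{-1}=S_{p+1,A}$, chases cohomology via Proposition \ref{prop:vanishing}, and concludes with $K_{p,1}(C,B;L)=H^1(C_{p+1},M_{p+1,B}\otimes N_{p+1,L})$.
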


\begin{proof}
We have a short exact sequence
$$
0 \longrightarrow M_{p+1,B} \longrightarrow H^0(C, B) \otimes \sO_{C_{p+1}} \xrightarrow{~\ev_{p+1,B}~} E_{p+1,B} \longrightarrow 0
$$
of vector bundles on $C_{p+1}$. Note that $\wedge^{r_B-j} M_{p+1,B}^{\vee} = \wedge^j M_{p+1,B} \otimes N_{p+1,B}$, where $r_B:=\rank M_{p+1,B}$. Taking the $(r_B-j)$-th exterior power of the dual of the above short exact sequence and tensoring with  $N_{p+1,B}^{-1}$, we obtain the following long exact sequence:
\begin{align*}
& \cdots \longrightarrow \wedge^{r_B-j-2} H^0(C, B)^{\vee} \otimes S^2 E_{p+1,B}^{\vee} \otimes N_{p+1,B} ^ {-1} \longrightarrow \wedge^{r_B-j-1} H^0(C, B)^{\vee} \otimes E_{p+1,B}^{\vee} \otimes N_{p+1,B}^{-1} \quad \quad \quad \\
& \quad \quad \quad \quad \quad \quad \quad \quad \quad \quad \quad \quad \quad \quad \quad \quad \quad \quad \longrightarrow
\wedge^{r_B-j} H^0(C, B)^{\vee} \otimes N_{p+1,B}^{-1}\longrightarrow  \wedge^j M_{p+1,B} \longrightarrow 0.
\end{align*}
Note that $N_{p+1,L} \otimes N_{p+1,B}^{-1} = S_{p+1, A}$, where $A:=L \otimes B^{-1}$.
Chasing through the long exact sequence (see \cite[Proposition B.1.2]{Lazarsfeld04}), we see that the first assertion follows from 
$$
H^i(C_{p+1}, S^j E_{p+1,B}^{\vee} \otimes S_{p+1,A})=0~~\text{ for $i>0$ and $0 \leq j \leq i-1$},
$$
which holds by Proposition \ref{prop:vanishing}. The second assertion now follows from
\[
K_{p,1}(C, B;L) = H^1(C_{p+1}, M_{p+1,B} \otimes N_{p+1,L})=0. \qedhere
\]
\end{proof}

\begin{remark}
The argument used in the proof deducing the cohomology vanishings of exterior powers of $M_{p+1,B}$ from those of symmetric powers of $E_{p+1,B}^{\vee}$ was also employed in \cite{Agostini24} (see also Remark \ref{rem:Agostini}) and \cite{CLPS25+}. The long exact sequence in the proof for $j=1$ is actually the Buchsbaum--Rim complex for the evaluation map $\ev_{p+1,B}$. This also plays a crucial role in the proof of Theorem \ref{thm:main1}. 
\end{remark}

Having established the required cohomology vanishing in Proposition \ref{prop:vanishing}, we now turn to the Euler characteristics of the terms in the Buchsbaum--Rim complex. The following lemma packages a computational ingredient into a coefficient-extraction formula adapted to Hirzebruch--Riemann--Roch. For a power series $P(z_1, \ldots, z_n)$, we denote by $[P(z_1, \ldots, z_n)]_{z_1^{a_1}\cdots z_n^{a_n}}$ the coefficient of the monomial $z_1^{a_1}\cdots z_n^{a_n}$ in $P(z_1, \ldots, z_n)$.

\begin{lemma}\label{lem:powerseries}
Let $\lambda_1, \ldots, \lambda_p$ be the Chern roots of $E_{p, \omega_C}^{\vee}$. For formal power series $\Phi(z)$ and $\Psi(z)$ such that $\Phi(0)$ is invertible, we have
$$
\int_{C_p} \Psi(x_p) \prod_{i=1}^p \Phi(\lambda_i) = \left[ \Psi(u) \Phi(0)^{g-1} \Phi(u)^{p-g+1} \left( 1 - u \frac{\Phi'(u)}{\Phi(u)} \right) ^ g \right]_{u^p}.
$$
\end{lemma}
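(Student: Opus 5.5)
The plan is to reduce the integral to Macdonald's description of the cohomology of $C_p$, using the classical formula for the Chern class of the tangent bundle. First note that $E_{p,\omega_C}=\Omega^1_{C_p}$, so $E_{p,\omega_C}^{\vee}=T_{C_p}$. By Macdonald (see \cite[Section VII.5]{ACGH85}),
$$
c(T_{C_p})=(1+x_p)^{p-g+1}\exp\!\Big(-\frac{\theta_p}{1+x_p}\Big).
$$
Since $\prod_i\Phi(\lambda_i)=\Phi(0)^p\prod_i\big(\Phi(\lambda_i)/\Phi(0)\big)$, it suffices to treat the case $\Phi(0)=1$ and then restore the scalar. In that case $\prod_i\Phi(\lambda_i)$ is a multiplicative characteristic class, determined by the total Chern class.

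\textbf{Formal roots.} Work in a formal extension of the ring generated by $x=x_p$ and $\theta=\theta_p$. Following Macdonald, write $\theta=\sigma_1+\cdots+\sigma_g$ with commuting classes satisfying $\sigma_i^2=0$, so that $\theta^j=j!\,e_j(\sigma_1,\dots,\sigma_g)$. Then
$$
\exp\!\Big(-\frac{\theta}{1+x}\Big)=\prod_{i=1}^g\Big(1-\frac{\sigma_i}{1+x}\Big)=(1+x)^{-g}\prod_{i=1}^g(1+x-\sigma_i).
$$
Hence $c(T_{C_p})=(1+x)^{p-2g+1}\prod_{i}(1+x-\sigma_i)$. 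For a multiplicative class with $\Phi(0)=1$ this gives
$$
\prod\Phi(\lambda_i)=\Phi(x)^{p-2g+1}\prod_{i=1}^g\Phi(x-\sigma_i).
$$
This uses that multiplicative classes are compatible with Chern polynomials even with negative virtual multiplicities, which holds because they are group homomorphisms from $K$-theory.

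\textbf{Expansion.} Since $\sigma_i^2=0$, we have $\Phi(x-\sigma_i)=\Phi(x)\big(1-\sigma_i\Phi'(x)/\Phi(x)\big)$. Expanding the product gives
$$
\prod\Phi(\lambda_i)=\Phi(x)^{p-g+1}\sum_{j\ge0}\Big(-\frac{\Phi'(x)}{\Phi(x)}\Big)^{j}\frac{\theta^j}{j!}.
$$
Multiply by $\Psi(x)$ and integrate using (\ref{eq:xtheta}), namely $\int x^{p-j}\theta^j=g!/(g-j)!$. The term with $\theta^j$ contributes
$$
\binom{g}{j}\Big[\Psi(u)\Phi(u)^{p-g+1}\big(-\Phi'(u)/\Phi(u)\big)^j\Big]_{u^{p-j}}=\Big[\Psi\,\Phi^{p-g+1}\binom{g}{j}\big(-u\Phi'/\Phi\big)^j\Big]_{u^p}.
$$
Summing over $j$, which is allowed since terms with $j>\min\{p,g\}$ vanish on both sides, gives $\big[\Psi\Phi^{p-g+1}(1-u\Phi'/\Phi)^g\big]_{u^p}$.

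\textbf{Restoring $\Phi(0)$.} For general $\Phi$, replace $\Phi$ by $\Phi/\Phi(0)$, which multiplies the left side by $\Phi(0)^{-p}$. On the right side, $\Phi'/\Phi$ is unchanged and $\Phi^{p-g+1}$ scales by $\Phi(0)^{-(p-g+1)}$. So the correcting factor on the right is $\Phi(0)^{p-(p-g+1)}=\Phi(0)^{g-1}$, as claimed.

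The main obstacle is justifying the formal-root manipulation rigorously. I would handle it by phrasing everything as identities of multiplicative classes in $K$-theory or cohomology, so the negative exponent $p-2g+1$ causes no trouble. I would also check that the $\sigma_i$-expansion is legitimate. One option is to pass to Macdonald's presentation of $H^*(C_p)$ via the $\xi_i\xi_i'$. Alternatively, one can verify directly that $\prod_i(1-\sigma_i f)=\exp(-\theta f)$ is an identity of polynomials in $\theta$ once $e_j(\sigma)=\theta^j/j!$ is imposed, so only $\theta$ and $x$ ever appear.
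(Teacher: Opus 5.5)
Your proof is correct and follows the paper's overall structure. Both start from $c(T_{C_p})=(1+x_p)^{p-g+1}e^{-\theta_p/(1+x_p)}$. Both reduce to
\[
\prod_{i=1}^p\Phi(\lambda_i)=\Phi(0)^{g-1}\Phi(x_p)^{p-g+1}\exp\Bigl(-\theta_p\frac{\Phi'(x_p)}{\Phi(x_p)}\Bigr),
\]
and both finish by integrating term by term in $\theta_p^j$ via (\ref{eq:xtheta}). The only difference is how you obtain this middle identity.

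The paper takes logarithms of $c_t$ to get the power sums $\sum_i\lambda_i^n=(p-g+1)x_p^n-n\theta_px_p^{n-1}$. This gives $\sum_i q(\lambda_i)=(p-g+1)q(x_p)-\theta_pq'(x_p)$ for $q=\log(\Phi/\Phi(0))$. That route never leaves the subring generated by $x_p,\theta_p$, absorbs $\Phi(0)$ at once, and needs no virtual roots.

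You instead split the exponential using Macdonald's square-zero classes $\sigma_i$. You read off virtual roots $x_p$ (multiplicity $p-2g+1$) and $x_p-\sigma_i$, then use multiplicativity plus the first-order expansion $\Phi(x-\sigma_i)=\Phi(x)-\sigma_i\Phi'(x)$. This shows conceptually where $\exp(-\theta\Phi'/\Phi)$ comes from, but it requires a justification step.

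Your first option supplies that justification, though not for the reason you give. The $x_p-\sigma_i$ are not Chern classes of line bundles, so $K$-theory is not what makes it work. Rather, the $\sigma_i=\xi_i\xi_i'$ are genuine classes in the commutative ring $H^{\mathrm{ev}}(C_p,\mathbf{Q})$ with $\theta_p=\sum_i\sigma_i$. Hirzebruch's multiplicative sequence attached to $\Phi$ is a group homomorphism on $1+H^{>0}$ of any commutative graded $\mathbf{Q}$-algebra, so negative exponents are harmless.

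Your second option also works, provided you note that $\mathbf{Q}[[x]][\theta]/(\theta^{g+1})$ embeds in $\mathbf{Q}[[x]][\sigma_1,\dots,\sigma_g]/(\sigma_1^2,\dots,\sigma_g^2)$, because the $e_j(\sigma)$ are linearly independent. You can then prove the identity there and specialize.
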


\begin{proof}
Recall that $E_{p, \omega_C}^{\vee} = \Theta_{C_p}$ is the tangent bundle of $C_p$.
By \cite[Lemma VIII.2.5]{ACGH85}, 
$$
c_t(E_{p, \omega_C}^{\vee}) =  (1+\lambda_1t) \cdots (1+ \lambda_p t) = (1+x_pt)^{p-g+1} e^{-\frac{\theta_p t}{1+x_pt}}.
$$
Taking logarithms and comparing the homogeneous terms of degree $n$ in $t$, we find
$$
P_n:=\sum_{i=1}^p \lambda_i^n = (p-g+1)x_p^n - n \theta_p x_p^{n-1}~~\text{ for any integer $n \geq 1$}
$$
Set
$$
q(z):=\log \frac{\Phi(z)}{\Phi(0)}.
$$
Since $q(0)=0$, the above power-sum identity gives
$$
\sum_{i=1}^p q(\lambda_i) = (p-g+1)q(x_p) - \theta_p q'(x_p). 
$$
Thus
$$
\prod_{i=1}^p \Phi(\lambda_i) = \Phi(0)^p e^{\sum_{i=1}^p q(\lambda_i)}
= \Phi(0)^{g-1} \Phi(x_p)^{p-g+1} e^{-\theta_p \frac{\Phi'(x_p)}{\Phi(x_p)}}.
$$
For formal power series $F(u)$ and $R(u)$, the intersection formula (\ref{eq:xtheta}) yields
\begin{align*}
\int_{C_p} F(x_p) e^{\theta_p R(x_p)} &= \sum_{j=0}^{\min\{p,g\}} \frac{1}{j!} \frac{g!}{(g-j)!} [F(u)R(u)^j]_{u^{p-j}}\\
&= \left[ F(u) \sum_{j=0}^g {g \choose j} (u R(u))^j \right]_{u^{p}}\\
&= [F(u)(1+uR(u))^g]_{u^p}.
\end{align*}
Applying this identity with
$$
F(u)=\Psi(u)\Phi(0)^{g-1}\Phi(u)^{p-g+1}~~\text{ and }~~R(u)=-\frac{\Phi'(u)}{\Phi(u)},
$$
we obtain the assertion. 
\end{proof}

\begin{proposition}\label{prop:eulercharacteristic}
Let $A$ be a line bundle on $C$, and $a:=\deg A$. For $k \leq p \leq g-1$, we have
$$
\sum_{i=0}^{g-p-1} (-1)^i {g \choose p+1+i} \chi(S^i E_{p, \omega_C}^{\vee} \otimes S_{p, A}) = \sum_{i=0}^{g-p-1} {g \choose i} \left( (g-p-i) {a-2 \choose p-i} + {a-2 \choose p-i-1} \right).
$$
\end{proposition}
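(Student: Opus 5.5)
Compute each $\chi(S^i E_{p,\omega_C}^{\vee}\otimes S_{p,A})$ by Hirzebruch--Riemann--Roch on $C_p$, assemble the alternating sum into one generating function, and read it off as a coefficient extraction through Lemma \ref{lem:powerseries}. The Chern roots of $E_{p,\omega_C}^\vee=\Theta_{C_p}$ are $\lambda_1,\dots,\lambda_p$, so $\td(C_p)=\prod_i \lambda_i/(1-e^{-\lambda_i})$. The generating function of symmetric powers is $\sum_i \ch(S^iE^\vee)s^i=\prod_j (1-se^{\lambda_j})^{-1}$. For $c_1(S_{p,A})$ I will use $S_{p,A}=S_{p,\sO_C(z)}^{\otimes a}$ up to algebraic equivalence, and $\sO_{C_p}(X_{p,z})=S_{p,\sO_C(z)}$, to get $c_1(S_{p,A})=a x_p$. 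Putting these together,
$$\sum_i \chi(S^iE^\vee\otimes S_{p,A})s^i=\int_{C_p} e^{a x_p}\prod_{j=1}^p\Phi_s(\lambda_j),\qquad \Phi_s(z)=\frac{z}{(1-e^{-z})(1-se^{z})}.$$
Here $\Phi_s(0)=1/(1-s)$ is invertible, so Lemma \ref{lem:powerseries} applies with $\Psi(u)=e^{au}$.

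The next step is to simplify the lemma's output by a substitution. Writing $y=e^u-1$, the lemma gives
$$\Big[e^{au}(1-s)^{1-g}\Phi_s(u)^{p-g+1}\big(1-u\Phi_s'/\Phi_s\big)^g\Big]_{u^p}.$$
One computes $u\Phi_s'/\Phi_s = 1-\frac{u}{e^u-1}+\frac{use^u}{1-se^u}$. Hence $1-u\Phi_s'/\Phi_s=u\big(\frac1{e^u-1}-\frac{se^u}{1-se^u}\big)$. The factors $u^{p-g+1}$ and $u^g$ combine with $[\cdot]_{u^p}$ to become a residue $\operatorname{Res}_u\, u^{-p-1}u^{p+1}(\cdots)=\operatorname{Res}_u(\cdots)$ with no $u$-powers left. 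I then change variables to $y$, using $du=dy/(1+y)$ and $e^{au}=(1+y)^a$. After this the integrand is a rational function in $y$ and $s$. I expect it to take the form
$$\operatorname{Res}_y \frac{(1+y)^{a-2}\,(\text{poly in }y,s)^g}{y^{p+1}(1-s(1+y))^{\,p+1}}\,(1-s)^{1-g}\,dy,$$
or similar, and I will track the exponents carefully.

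It remains to extract the weighted alternating sum. The left side of the proposition is $\sum_i(-1)^i\binom{g}{p+1+i}[\,\cdot\,]_{s^i}$, a pairing of the $s$-series with a fixed polynomial. I will realize it as a second residue, for example $\sum_i(-1)^i\binom{g}{p+1+i}[F]_{s^i}=[F(-t)(1+t)^g]_{t^{p+1}}$ up to reindexing. The truncation $i\le g-p-1$ is automatic because the binomial vanishes beyond that range. The problem is then a two-variable coefficient extraction. I would first carry out the $s$-extraction, which is a geometric-series convolution. This should leave a single coefficient of the shape $[(1+y)^{a-2}(\ldots)^g]_{y^{p}}$ together with a correction term. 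Expanding $(\ldots)^g$ binomially as $\sum_i\binom gi y^i(\cdots)$ should produce $\binom{a-2}{p-i}$ and $\binom{a-2}{p-i-1}$ with the weights $(g-p-i)$ and $1$. Only terms with $i\le g-p-1$ survive, since the others cancel against the truncation.

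The main obstacle is this final bookkeeping: choosing the residue formulation of the alternating binomial weights so that the $s$-dependence collapses cleanly. As a check on the exponents and signs, I would verify the identity first at $p=g-1$. There only $i=0$ contributes, and the right side reduces to $\binom{a-2}{g-1}+\binom{a-2}{g-2}=\binom{a-1}{g-1}$. This should agree with $\chi(S_{g-1,A})\binom{g}{g}=\chi(S_{g-1,A})$, which I would compute independently.
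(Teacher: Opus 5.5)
Your setup is the paper's: the same $\Phi_s$, Lemma \ref{lem:powerseries} with $\Psi(u)=e^{au}$, the same logarithmic derivative (which combines to $u(1-se^{2u})/((e^u-1)(1-se^u))$), and the same substitution $y=e^u-1$. Your check at $p=g-1$ is also correct.

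The gap is the last step, which is where the identity actually lives, and which you only describe as what the expression ``should'' become. The order you propose does not work. The $g$-th power is $(1-s(1+y)^2)^g$, which involves $s$. So if you carry out the $s$-extraction first, there is no $g$-th power left to expand binomially. What you get instead is a double convolution of $\binom{-p-1}{j}$ and $\binom{g}{l}$ weighted by $\binom{a+p-g+j+2l}{p}$, with no visible collapse.

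There are also two slips in the bookkeeping:
\begin{itemize}
\item Since $\binom{g}{p+1+i}=\binom{g}{g-p-1-i}$, the pairing is $[(1+t)^gF(-t)]_{t^{g-p-1}}$, not the coefficient of $t^{p+1}$.
\item After the substitution, the prefactor is $(1+y)^{a+p-g}$ (coming from $e^{(a+p-g+1)u}$ and $du=dy/(1+y)$), not $(1+y)^{a-2}$.
\end{itemize}

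The missing idea is to factor before extracting. Put $Y:=1+y$. First, $(1+t)^g(1-s)^{1-g}$ at $s=-t$ is just $1+t$. Next, $Y^2=Y+yY$ gives $1+tY^2=(1+tY)+tyY$, hence
\[
(1+tY)^{-p-1}(1+tY^2)^g=\sum_{i=0}^{g}\binom{g}{i}t^iy^iY^i(1+tY)^{g-p-1-i}.
\]
The $t$-extraction is now one line. For $n=g-p-1-i\ge 0$ one has $[(1+t)(1+tY)^n]_{t^n}=Y^n+nY^{n-1}$. For $i>g-p-1$ one would need a negative power of $t$, so those terms vanish; this, not a cancellation, is the real reason for the truncation. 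What remains is
\[
\bigl[y^i\bigl(Y^{a-1}+(g-p-1-i)Y^{a-2}\bigr)\bigr]_{y^p}=\binom{a-1}{p-i}+(g-p-1-i)\binom{a-2}{p-i}.
\]
Pascal's rule turns this into the stated summand. This is precisely how the paper finishes.
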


\begin{proof}
By Hirzebruch--Riemann--Roch theorem, 
$$
h_i(a):=\chi(S^i E_{p, \omega_C}^{\vee} \otimes S_{p, A}) = \int_{C_p} \ch(S^i E_{p, \omega_C}^{\vee} \otimes S_{p, A}) \cdot \td(C_p) = \int_{C_p} e^{ax_p} \ch(S^i E_{p, \omega_C}^{\vee} ) \td(C_p).
$$
Let $\lambda_1, \ldots, \lambda_p$ be the Chern roots of $E_{p, \omega_C}^{\vee}$.
We have
$$
\sum_{i=0}^{\infty} \ch(S^i E_{p, \omega_C}^{\vee}) w^i = \prod_{i=1}^p \frac{1}{1-we^{\lambda_i}}~~\text{ and }~~\td(C_p) = \prod_{i=1}^p \frac{\lambda_i}{1-e^{-\lambda_i}}. 
$$
Then 
$$
H_a(w):=\sum_{i=0}^{\infty} h_i(a) w^i = \int_{C_p} e^{ax_p} \left( \sum_{i=0}^{\infty} \ch(S^i E_{p, \omega_C}^{\vee}) w^i \right) \td(C_p) = \int_{C_p} e^{ax_p} \prod_{i=1}^p \Phi_w(\lambda_i),
$$
where
$$
\Phi_w(z):=\frac{z}{1-e^{-z}} \frac{1}{1-we^z}~~\text{ with }~~\Phi_w(0)=\frac{1}{1-w}.
$$
By Lemma \ref{lem:powerseries} for $\Phi(u)=\Phi_w(u)$ and $\Psi(u)=e^{au}$, we obtain
$$
H_a(w)=\left[e^{au} \Phi_w(0)^{g-1} \Phi_w(u)^{p-g+1} \left( 1 - u \frac{\Phi_w'(u)}{\Phi_w(u)}\right)^g \right]_{u^p}.
$$
Logarithmic differentiation gives
$$
1-u\frac{\Phi_w'(u)}{\Phi_w(u)} = 1 - u \left(\frac{1}{u} - \frac{1}{e^u-1} + \frac{we^u}{1-we^u} \right) = \frac{u(1-we^{2u})}{(e^u-1)(1-we^u)}. 
$$
Thus
$$
H_a(w) = \left[e^{(a+p-g+1)u} \left( \frac{u}{e^u-1} \right)^{p+1} (1-w)^{1-g} (1-we^u)^{-p-1}(1-we^{2u})^g \right]_{u^p}.
$$
On the other hand, notice that
$$
\sum_{i=0}^{g-p-1} (-1)^i {g \choose p+1+i} \chi(S^i E_{p, \omega_C}^{\vee} \otimes S_{p, A})=[(1+w)^g H_a(-w)]_{w^{g-p-1}}.
$$
By the formal residue change, we get
\begin{align*}
&\sum_{i=0}^{g-p-1} (-1)^i {g \choose p+1+i} \chi(S^i E_{p, \omega_C}^{\vee} \otimes S_{p, A})\\
&= \left[ e^{(a+p-g+1)u} \left( \frac{u}{e^u-1} \right)^{p+1} (1+w) (1+we^u)^{-p-1}(1+we^{2u})^g\right]_{w^{g-p-1}u^p}\\
&= [y^{a-g+p}(1+w)(1+wy)^{-p-1}(1+wy^2)^g ]_{w^{g-p-1}z^p},
\end{align*}
where $z:=e^u-1$ and $y:=1+z$. 
As
$$
1+wy^2 = (1+wy)\left( 1+ \frac{wzy}{1+wy}\right),
$$
we find
$$
(1+wy)^{-p-1}(1+wy^2)^g = \left( 1+ \frac{wzy}{1+wy} \right)^g (1+wy)^{g-p-1}= \sum_{i=0}^g {g \choose i} w^i z^i y^i (1+wy)^{g-p-1-i}.
$$
Note that 
$$
[(1+w)(1+wy)^{g-p-1-i}]_{w^{g-p-1-i}} = y^{g-p-1-i} + (g-p-1-i)y^{g-p-2-i}.
$$
Hence
\begin{align*}
&\sum_{i=0}^{g-p-1} (-1)^i {g \choose p+1+i} \chi(S^i E_{p, \omega_C}^{\vee} \otimes S_{p, A})\\
&= \sum_{i=0}^{g-p-1} {g \choose i} [ y^{a-g+p} (y^{g-p-1} + (g-p-1-i)y^{g-p-2})]_{z^{p-i}}\\
&= \sum_{i=0}^{g-p-1} {g \choose i} [ (1+z)^{a-g+p} ((1+z)^{g-p-1} + (g-p-1-i)(1+z)^{g-p-2)}]_{z^{p-i}}\\
&= \sum_{i=0}^{g-p-1} {g \choose i} \left( {a-1 \choose p-i} + (g-p-1-i){a-2 \choose p-i} \right).
\end{align*}
Now, using Pascal identity, one can easily check that
$$
{a-1 \choose p-i} + (g-p-1-i){a-2 \choose p-i}
= (g-p-i) {a-2 \choose p-i} + {a-2 \choose p-i-1}.
$$
This finishes the proof. 
\end{proof}

The following theorem readily implies Theorem \ref{thm:main1} since $C_p^1$ has the expected dimension $2p-g-1$ for all $k \leq p \leq g-1$ when $C$ has maximal gonality $k=\lfloor (g+3)/2 \rfloor$ and $\dim W_k^1(C)=0$ if $g$ is even. 

\begin{theorem}\label{thm:bettinumber}
For an integer $m$ with $k \leq m \leq g-1$, assume that $\dim C_m^1 = 2m-g-1$ and $H^1(C, L \otimes \omega_C^{-1})=0$. For $m \leq p \leq g-1$, we have
$$
\kappa_{r-p, 1}(C, L)= \sum_{i=g-p}^p {g \choose i} \left( (i-g+p) {d- 2g \choose p-i} - {d - 2g \choose p-i-1} \right)
$$
and
$$
\kappa_{r-p-1, 2}(C, L) = \sum_{i=0}^{g-p-1} {g \choose i} \left( (g-p-i) {d-2g \choose p-i} + {d-2g \choose p-i-1} \right).
$$
\end{theorem}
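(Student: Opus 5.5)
The plan is to prove the weight-two formula first and then derive the weight-one formula from (\ref{eq:k_1-k_2}).

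\emph{Step 1: propagate the dimension hypothesis.} For $m \le p \le g-1$, the bound $\dim W^1_{p+1}(C) \ge \dim W^1_p(C)+1$ works against us here. Instead I combine it with the upper bound $\dim W_p^1(C) \le \dim W_m^1(C)+2(p-m)$ and with $\dim W_g^1(C)=g-2$. Going down from $p=g$ in steps of at least $1$ gives $\dim W_p^1(C)\ge 2p-g-2$. Going up from $m$ in steps of at most $2$ gives $\dim W_p^1(C)\le 2p-g-2$. Hence $\dim C_p^1=2p-g-1$ for all $m\le p\le g-1$.

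\emph{Step 2: resolve $\mathscr{F}_{p,\omega_C}$ and compute $\kappa_{r-p-1,2}$.} Fix such $p$ and consider $\ev_{p,\omega_C}\colon H^0(\omega_C)\otimes\sO_{C_p}\to E_{p,\omega_C}$, a map from rank $g$ to rank $p$. Its degeneracy locus $C_p^1$ has codimension $p-(2p-g-1)=g-p+1$, which is the expected codimension. So the Buchsbaum--Rim complex is exact and resolves the kernel:
$$0\to \wedge^{g}H^0\otimes S^{g-p-1}E^\vee\otimes N^{-1}\to\cdots\to\wedge^{p+1}H^0\otimes N^{-1}\to \mathscr{F}_{p,\omega_C}\to 0,$$
where $E=E_{p,\omega_C}$ and $N=N_{p,\omega_C}=\det E$. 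The $i$-th term is $\wedge^{p+1+i}H^0(\omega_C)\otimes S^iE^\vee\otimes N^{-1}$ for $0\le i\le g-p-1$. Twisting by $N_{p,L}$ turns $N^{-1}\otimes N_{p,L}$ into $S_{p,A}$ with $A=L\otimes\omega_C^{-1}$, which satisfies $H^1(A)=0$ by assumption.

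Proposition \ref{prop:vanishing} gives $H^j(S^iE^\vee\otimes S_{p,A})=0$ for $j>i$. Chasing the resolution, $H^j(\mathscr{F}_{p,\omega_C}\otimes N_{p,L})$ is controlled by $H^{j+i}$ of the $i$-th term, and $j+i>i$ whenever $j>0$. Hence all higher cohomology of $\mathscr{F}_{p,\omega_C}\otimes N_{p,L}$ vanishes, and
$$\kappa_{r-p-1,2}(C,L)=h^0(\mathscr{F}_{p,\omega_C}\otimes N_{p,L})=\sum_i(-1)^i\binom{g}{p+1+i}\chi(S^iE^\vee\otimes S_{p,A}).$$

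Here $L$ is nonspecial, since $H^1(L\otimes\omega_C^{-1})=0$ forces $d\ge 2g-1$ and in fact $d\ge 2g+1$. This justifies $\kappa_{r-p-1,2}=\kappa_{p,0}(C,\omega_C;L)=h^0(C_p,\mathscr{F}_{p,\omega_C}\otimes N_{p,L})$ from Subsection \ref{subsec:syzygies}. Proposition \ref{prop:eulercharacteristic} with $a=d-2g+2$ then gives the stated formula. Strictly, that proposition is stated for $k\le p$, but its proof is pure Hirzebruch--Riemann--Roch and applies here.

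\emph{Step 3: the weight-one formula.} By (\ref{eq:k_1-k_2}) with index $r-p-1$ and $r=d-g$,
$$\kappa_{r-p,1}=\kappa_{r-p-1,2}+(r-p)\binom{d-g}{r-p}\Bigl(\frac{d-g+1}{r-p+1}-\frac{d}{d-g}\Bigr).$$
It remains to verify the binomial identity expressing this as the claimed sum over $g-p\le i\le p$. My approach is to rewrite the correction term via Vandermonde, using $\binom{d-g}{p}=\sum_i\binom{g}{i}\binom{d-2g}{p-i}$ and similar expansions, so that it becomes a single sum $\sum_{i=0}^{p}\binom{g}{i}\bigl((i-g+p)\binom{d-2g}{p-i}-\binom{d-2g}{p-i-1}\bigr)$. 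Adding $\kappa_{r-p-1,2}$ cancels exactly the terms with $i\le g-p-1$, which leaves the range $g-p\le i\le p$.

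\emph{Main obstacle.} I expect the hardest part to be this final combinatorial identity, that is, matching the Riemann--Roch correction term with the full sum. The key check is that $\sum_{i=0}^{p}\binom{g}{i}\bigl((i-g+p)\binom{n}{p-i}-\binom{n}{p-i-1}\bigr)$ with $n=d-2g$ equals $\chi(\wedge^{r-p}M_L\otimes L)-\binom{r+1}{r-p+1}$. I would do this with generating functions in $(1+z)^n(1+z)^g$, using $\sum_i i\binom{g}{i}\binom{n}{p-i}=g\binom{d-2g+g-1}{p-1}$. The cohomological part, by contrast, is routine given Proposition \ref{prop:vanishing}.
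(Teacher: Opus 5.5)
Your proposal is correct and follows the paper's proof essentially step by step: propagate the expected dimension of $C_p^1$ via \cite{FHL84}, resolve $\mathscr{F}_{p,\omega_C}$ by the Buchsbaum--Rim complex, kill the higher cohomology with Proposition \ref{prop:vanishing}, evaluate the Euler characteristic with Proposition \ref{prop:eulercharacteristic} (which applies directly, since $p\ge m\ge k$), and recover the weight-one number from (\ref{eq:k_1-k_2}) through the same generating-function identity, namely the paper's $P(t)=Q_1(t)-Q_2(t)$. One small slip in Step 1: descending from $\dim W_g^1(C)=g-2$ in steps of at least $1$ gives the upper bound $p-2$, not the lower bound $2p-g-2$ (that lower bound comes from steps of at most $2$, or from the general bound $\rho$); however, only the upper bound $\dim C_p^1\le 2p-g-1$, i.e.\ codimension at least $g-p+1$, is needed for exactness of the Buchsbaum--Rim complex.
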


\begin{proof}
By \cite[Theorem 1]{FHL84}, $C_p^1$ has the expected dimension $2p-g-1$ for $m \leq p \leq g-1$.
Consider the evaluation map 
$$
\operatorname{ev}_{p, \omega_C} \colon H^0(C, \omega_C) \otimes \mathscr{O}_{C_p} \longrightarrow E_{p, \omega_C}, 
$$
and let $\mathscr{F}_{p, \omega_C}$ be the kernel of $\operatorname{ev}_{p, \omega_C}$. Recall that
$$
\kappa_{r-p-1, 2}(C, L) = \kappa_{p,0}(C, \omega_C; L) = h^0(C_p, \mathscr{F}_{p, \omega_C} \otimes N_{p,L}). 
$$
The degeneracy locus of $\ev_{p, \omega_C}$ is $C_p^1$, which is assumed to have the expected dimension $2p-g-1$, so it has the expected codimension $g-p+1$. By \cite[Theorem B.2.2]{Lazarsfeld04} (see also \cite[Theorem A2.10]{Eisenbud95}), the kernel $\mathscr{F}_{p, \omega_C}$ of $\ev_{p, \omega_C}$ is resolved by the Buchsbaum--Rim (or Eagon--Northcott) complex $\operatorname{BR}_{p,\bullet}$:
\begin{align*}
& 0 \longrightarrow \wedge^g H^0(C, \omega_C) \otimes S^{g-p-1} E_{p, \omega_C}^{\vee} \otimes N_{p, \omega_C}^{-1} \longrightarrow \wedge^{g-1} H^0(C, \omega_C) \otimes S^{g-p-2} E_{p, \omega_C}^{\vee} \otimes N_{p, \omega_C}^{-1} \longrightarrow  \\
& \quad \quad \quad \quad \quad \cdots \longrightarrow \wedge^{p+2} H^0(C, \omega_C) \otimes E_{p, \omega_C}^{\vee} \otimes N_{p, \omega_C}^{-1} \longrightarrow \wedge^{p+1} H^0(C, \omega_C) \otimes N_{p, \omega_C}^{-1} \longrightarrow \mathscr{F}_{p, \omega_C} \longrightarrow 0. 
\end{align*}
Notice that $N_{p, \omega_C}^{-1} \otimes N_{p,L} = S_{p, L \otimes \omega_C^{-1}} = S_{p,A}$, where $A:=L \otimes \omega_C^{-1}$. 
Since $H^1(C, A)=0$, Proposition \ref{prop:vanishing} shows that
$$
H^i(C_p, S^j E_{p, \omega_C}^{\vee} \otimes S_{p,A})=0~~\text{ for $i>0$ and $0 \leq j \leq i-1$}. 
$$
Chasing through the Buchsbaum--Rim complex (see \cite[Proposition B.1.2]{Lazarsfeld04}), we obtain
$$
H^i(C_p, \mathscr{F}_{p, \omega_C} \otimes N_{p,L})=0~~\text{ for $i>0$}.
$$
Thus
$$
\kappa_{r-p-1, 2}(C, L) = h^0(C_p, \mathscr{F}_{p, \omega_C} \otimes N_{p,L}) = \chi( \mathscr{F}_{p, \omega_C} \otimes N_{p,L}).
$$
The Buchsbaum--Rim complex and Proposition \ref{prop:eulercharacteristic} yield
\begin{align*}
\chi( \mathscr{F}_{p, \omega_C} \otimes N_{p,L}) &= \sum_{i=0}^{g-p-1} (-1)^i {g \choose p+1+i} \chi(S^i E_{p, \omega_C}^{\vee} \otimes S_{p, A})\\
&=\sum_{i=0}^{g-p-1} {g \choose i} \left( (g-p-i) {d-2g \choose p-i} + {d-2g \choose p-i-1} \right).
\end{align*}
Now, we recall from (\ref{eq:k_1-k_2}) that
$$
\kappa_{r-p,1}(C,L) = \kappa_{r-p-1,2}(C, L) + (d-g-p) { d - g \choose p} \left( \frac{d-g+1}{d-g-p+1} - \frac{d}{d-g} \right).
$$
It is easy to see that
$$
(d-g-p) { d - g \choose p} \left( \frac{d-g+1}{d-g-p+1} - \frac{d}{d-g} \right)
= g {d-g-1 \choose p-1} - (g-p) {d-g \choose p} - {d-g \choose p-1}.
$$
Consider the polynomial
$$
P(t):=Q_1(t)-Q_2(t)
$$
where
$$
Q_1(t):=\sum_{i=g-p}^g {g \choose i} \big( (i-g+p)t^i - t^{i+1} \big)~~\text{ and }~~Q_2(t):=\sum_{i=0}^{g-p-1}{g \choose i} \big( (g-p-i)t^i + t^{i+1} \big).
$$
Notice that
$$
\kappa_{r-p-1,2}(C,L)=[(1+t)^{d-2g}Q_2(t)]_{t^p}.
$$
Since
$$
P(t)=\sum_{i=0}^g {g \choose i} \big( (i-g+p)t^i - t^{i+1} \big) = gt(1+t)^{g-1} - (g-p)(1+t)^g - t(1+t)^g,
$$
it follows that
$$
[(1+t)^{d-2g}P(t)]_{t^p} = g {d-g-1 \choose p-1} - (g-p) {d-g \choose p} - {d-g \choose p-1}.
$$
Thus
$$
\kappa_{r-p,1}(C,L) = [(1+t)^{d-2g} (Q_2(t)+P(t))]_{t^p} = [(1+t)^{d-2g} Q_1(t) ]_{t^p}. 
$$
This proves
\[
\kappa_{r-p,1}(C,L) = \sum_{i=g-p}^p {g \choose i} \left( (i-g+p) {d- 2g \choose p-i} - {d - 2g \choose p-i-1} \right). \qedhere
\]
\end{proof}

\begin{remark}\label{rem:leadcoff}
Assume that $d =\deg L \gg 0$, and let $\mathscr{G}_{p, \omega_C}$ be the cokernel of the evaluation map $\operatorname{ev}_{p, \omega_C} \colon H^0(C, \omega_C) \otimes \mathscr{O}_{C_p} \rightarrow E_{p, \omega_C}$. Suppose that $\dim C_p^1=2p-g-1$ for an integer $p$ with $k \leq p \leq g-1$. We briefly explain an alternative approach to computing the leading coefficient of $\kappa_{r-p,1}(C,L)$, regarded as a polynomial in $d$. 
As $d =\deg L \gg0$, we find
$$
\kappa_{r-p,1}(C,L) = h^0(C_p, \mathscr{G}_{p, \omega_C} \otimes N_{p,L}) = \chi(\mathscr{G}_{p, \omega_C} \otimes N_{p,L}). 
$$
Note that $\mathscr{G}_{p, \omega_C}$ is supported on $C_p^1$ and has generic rank one along its support. By \cite[p.326]{ACGH85}, the fundamental class of $C_p^1$ is 
\begin{equation}\label{eq:fundclass}
[C_p^1] = \frac{\theta_p^{g-p}}{(g-p+1)!}(\theta_p - (g-p+1)x_p). 
\end{equation}
Applying the asymptotic Riemann--Roch theorem \cite[Example 1.2.19]{Lazarsfeld04} and (\ref{eq:xtheta}), one can show that the leading coefficient of $\kappa_{r-p,1}(C,L)$ is
$$
\frac{[C_p^1] \cdot x_p^{2p-g-1}}{(2p-g-1)!} = \frac{1}{p(2p-g-2)!} {g \choose p-1}.
$$
This approach is also useful without assuming that $C_p^1$ has the expected dimension in several special cases. 
First, suppose that $W_p^1(C)$ consists of finitely many reduced points. Set $n:=\# W_p^1(C)$. Then $C_p^1$ is a disjoint union of $n$ copies of $\nP^1$. On each component, 
$\mathscr{G}_{p, \omega_C}|_{\nP^1} = \omega_{\nP^1}$ and $N_{p,L}|_{\nP^1} = \sO_{\nP^1}(r-p+1)$. Thus 
$$
\kappa_{r-p,1}(C,L) = n(r-p). 
$$
This argument was sketched in \cite[Proposition 2.3]{EL15}. If $C$ is a general curve of gonality $k$ with $2 \leq k \leq \lfloor (g+1)/2 \rfloor$, then $W_k^1(C)$ is a single reduced point by \cite{AC81} so that
$$
\kappa_{r-k, 1}(C,L) = r-k. 
$$
This recovers Kemeny's result \cite[Theorem 1.1]{Kemeny20} in the case $d \gg 0$. If $C$ is a general curve of even genus $g=2k-2$, then 
$$
\kappa_{r-k,1}(C,L) = \frac{1}{k}{2k-2 \choose k-1} (r-k). 
$$
Here the leading coefficient is the number of points in $W_k^1(C)$ (see \cite[p.211]{ACGH85}). 
Next, suppose that $C$ is a general curve of odd genus $g=2k-3$. In this case, $B:=W_k^1(C)$ is a smooth curve of genus
$$
\widetilde{g}:=1 + \frac{k-2}{k} {2k-2 \choose k-1},
$$
(see \cite[Theorem 4]{EH87}), and $S:=C_k^1$ is a smooth surface such that the restricted Abel--Jacobi map $u_k \colon S \to B$ is a $\nP^1$-fibration. We know that $\mathscr{G}_{k, \omega_C} = \omega_{C_k^1/W_k^1(C)}$. By Riemann--Roch theorem,
$$
\kappa_{r-k,1}(C,L) 
= \chi(\omega_{S/B} \otimes N_{k,L}|_S)
= \frac{1}{2}\big([K_{S/B}] + [N_{k,L}|_{S}]\big)\big([K_{S/B}] + [N_{k,L}|_{S}] - [K_{S}]\big) + 1 - \widetilde{g}. 
$$
Let $\overline{x}_k:=x_k|_{S}$ and $\overline{\theta}_k:=\theta_k|_{S}$. By \cite[Lemma 5.8]{AN10}, $[N_{k,L}|_{S}] = (d-3k+4)\overline{x}_k + \overline{\theta}_k$. For a general point $z \in C$, the divisor $X_{k,z}|_{S}$ maps isomorphically onto $B$ under $u_k$. 
If $F$ is the divisor class of a fiber of  $u_k \colon S \to B$, then $\overline{x}_k \cdot F=1$ and $\overline{\theta}_k \cdot F=0$. Furthermore, by adjunction formula, 
$$
[K_{S}] \cdot \overline{x}_k = 2\widetilde{g}-2 - \overline{x}_k^2~\text{ and }~[K_{S}] \cdot \overline{\theta}_k = -2\overline x_k \overline{\theta}_k. 
$$
By a direct calculation using (\ref{eq:xtheta}) and (\ref{eq:fundclass}), we obtain
$$
\kappa_{r-k,1}(C,L) = \frac{1}{k}{2k-3 \choose k-1}(d-3k+3)(d-4k+8).
$$
Here the leading coefficient is $(\tilde{g}-1)/(g-1)$.
\end{remark}

\begin{remark}
By the method introduced in this section, we can also compute $\kappa_{p,0}(C, B; L)$ and $\kappa_{p-1,1}(C,B;L)$ for any line bundle $B$ on $C$ provided that $H^0(C, B \otimes L^{-1})=H^1(C, L \otimes B^{-1})=0$ and the degeneracy locus of the evaluation map $\ev_{p, B} \colon H^0(C, B) \otimes \sO_{C_p} \to E_{p,B}$ has the expected dimension. For instance, if we further assume that $H^1(C, B)=0$ and $b:=\deg B$, then 
$$
\kappa_{p,0}(C, B; L) = \sum_{i=0}^{b-g+1-p} (b-g+1-p-i)  {g \choose i} {d-2g \choose p-i}
$$
and 
$$
\kappa_{p-1,1}(C,B;L) = \sum_{i=b-g+1-p}^p (i-b+g-1+p) {g \choose i} {d - 2g \choose p-i}.
$$
We leave the details to the interested reader. 
\end{remark}

Next, we turn to the proofs of Corollary \ref{cor:inequality} and Theorem \ref{thm:main2}. First, we present the result of the computation of the graded Betti numbers in the case where $C$ is a hyperelliptic curve. This is essentially due to Nagel--Pittleloud \cite[Proposition 2.3]{NP94}.

\begin{theorem}[Nagel--Pittleloud]\label{thm:bettihyperelliptic}
Assume that $C$ is a hyperelliptic curve and $d:=\deg L \geq 2g+1$. For $2 \leq p \leq g-1$, we have
$$
\kappa_{r-p,1}(C, L)=(d-g-p){d-g-1 \choose p-2}
~\text{ and }~
\kappa_{r-p-1,2}(C,L) = (g-p){d-g-1 \choose p}.
$$
\end{theorem}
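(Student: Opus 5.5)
Since (\ref{eq:k_1-k_2}) determines $\kappa_{r-p,1}(C,L)-\kappa_{r-p-1,2}(C,L)$ in terms of $d$ and $g$, it suffices to compute $\kappa_{r-p-1,2}(C,L)=\kappa_{p,0}(C,\omega_C;L)=h^0(C,\wedge^p M_L\otimes\omega_C)$ and then check the algebraic identity
$$
(d-g-p){d-g-1\choose p-2}-(g-p){d-g-1\choose p}=g{d-g-1\choose p-1}-(g-p){d-g\choose p}-{d-g\choose p-1},
$$
which is routine via Pascal's rule.

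To compute $h^0(C,\wedge^pM_L\otimes\omega_C)$, I would use the hyperelliptic structure. Let $f\colon C\to\nP^1$ be the double cover, $H:=f^*\sO_{\nP^1}(1)$, so $\omega_C=H^{g-1}$ and $H^0(C,\omega_C)=f^*H^0(\nP^1,\sO(g-1))=S^{g-1}V$, where $V=H^0(C,H)$. The plan is to work on $C_p$ and identify $\mathscr F_{p,\omega_C}$. The evaluation $S^{g-1}V\otimes\sO_{C_p}\to E_{p,\omega_C}$ fails to be surjective exactly along $C_p^1$, which for a hyperelliptic curve is the locus of divisors containing a fibre of $f$. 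Instead, I prefer a direct approach: $\wedge^p M_L\otimes\omega_C$ and its sections can be computed by the filtration of $M_L$ coming from $H$. Write $L=H^{a}(D)$ in a normal form and use the exact sequence $0\to M_{L(-H)}\to M_L\to\dots$; the cleaner route, following Nagel--Pittleloud, is to note that $C$ lies on the rational normal scroll $S\subset\nP^r$ swept by the lines spanned by the fibres of $f$. The resolution of $\sO_C$ over $\sO_S$ is $0\to\sO_S(-2\mathcal H+(d-g-1... )\mathcal F)\to\sO_S\to\sO_C\to0$ (a divisor of class $2\mathcal H-bF$ with $b=d-2g-2$). The Betti numbers of $C$ then come from the mapping cone of the Eagon--Northcott resolution of $S$ and the Buchsbaum--Eisenbud type resolution of $\sO_S(-2\mathcal H+bF)$, both given by Eagon--Northcott type complexes attached to the $2\times(r)$ matrix defining the scroll.

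Concretely, the steps are: (1) show that $C\subset S\subset\nP^r$ with $S$ a rational normal surface scroll of degree $r-1=d-g-1$ and $C\in|2\mathcal H-(d-2g-2)F|$; (2) write the Eagon--Northcott resolution of $\sO_S$ (linear, Betti numbers $i{d-g-1\choose i+1}$) and the resolution of $\sO_S(-2\mathcal H+bF)$ via the complex $\mathcal C^{b}$ of Eisenbud--Schreyer type, whose terms involve $S^{j}$ of the rank-two bundle; (3) form the mapping cone and observe by degree reasons that no cancellation is possible in the relevant range, since the scroll part contributes only in weight $1$ and the twisted part contributes in weights $1$ and $2$ in separated positions; (4) read off $\kappa_{r-p-1,2}=(g-p){d-g-1\choose p}$.

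The main obstacle is step (3): the minimality of the mapping cone. I would handle it by checking that, in each homological degree, the two pieces sit in different internal degrees except where the identity from (\ref{eq:k_1-k_2}) already forces the answer. Alternatively, I would sidestep minimality entirely by computing only the weight-two part. By the duality $\kappa_{r-p-1,2}=h^0(\wedge^{r-p-1}M_L\otimes\omega_C)^\vee$-type identification, only the contribution of $\sO_S(-2\mathcal H+bF)$ can give weight-two syzygies. That contribution is the dimension of $H^0(\nP^1, S^{p}(\cdot)\otimes\sO(g-1-p))$-type spaces, yielding $(g-p){d-g-1\choose p}$.
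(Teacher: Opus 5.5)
Your proposal is correct, but it follows a different route from the paper. The paper never resolves $C$ on the surface. Since $C\subseteq\nP^r$ is arithmetically Cohen--Macaulay, it passes to a general hyperplane section $\Gamma\subseteq\nP^{r-1}$, observes that $\Gamma$ lies on the rational normal curve cut out on the scroll $S$, and quotes Nagel--Pittleloud's computation of the Betti numbers of finite subschemes of a rational normal curve.

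You do the analogous computation one dimension up, directly on the surface:
\begin{itemize}
\item $C\sim 2\xi-bF$ on $\nP(f_*L)$ with $b=d-2g-2\geq -1$.
\item The Eagon--Northcott complex $\mathcal{C}^0$ resolves the scroll. It comes from a $2\times(r-1)$ matrix, not $2\times r$.
\item $\mathcal{C}^b(-2)$ resolves the ideal.
\item The mapping cone resolves $R(C,L)$, using $H^1(\sO((m-2)\xi+bF))=0$, which holds because $b\geq -1$.
\end{itemize}
This is self-contained and produces both rows at once, so the preliminary reduction through (\ref{eq:k_1-k_2}) is unnecessary, although your binomial identity is correct.

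The minimality you flag as the main obstacle is automatic. The comparison map sends $\mathcal{C}^b(-2)_{i-1}$, which is generated in degree $i+1$ or $i+2$, to $\mathcal{C}^0_{i-1}$, which is generated in degree at most $i$. So it has no constant entries, and your vague ``sidestep'' paragraph can be dropped.

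Reading off the Betti numbers with $f=d-g-1$:
\begin{itemize}
\item In homological degree $i=r-p-1$, the weight-two part is $\mathcal{C}^b_{i-1}(-2)$ with $i-1\geq b+1$, i.e.\ $p\leq g-1$. Its rank is $(i-1-b){f\choose i}=(g-p){d-g-1\choose p}$.
\item In homological degree $r-p$, the twisted part contributes nothing in weight one, since that would require $p\geq g+1$. Only the Eagon--Northcott term $(r-p){r-1\choose p-2}=(d-g-p){d-g-1\choose p-2}$ remains.
\end{itemize}

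One point needs care. For $d\in\{2g+1,2g+2\}$ the scroll can be a cone. For example, $L=\omega_C\otimes H^2$ gives $f_*L=\sO_{\nP^1}(g+1)\oplus\sO_{\nP^1}$. So you should work on $\nP(f_*L)$ with the modules $\bigoplus_m H^0(\sO(m\xi+bF))$, not with divisor classes on the possibly singular image $S$.

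The paper's hyperplane-section argument handles the cone case uniformly, since a general hyperplane section of such a cone is still a rational normal curve. It is also shorter, because the one-dimensional computation is taken from the literature. Your version buys an explicit, citation-free derivation of both rows of the Betti table.
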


\begin{proof}
Consider the embedding $C \subseteq \nP H^0(C, L) = \nP^r$. As $C \subseteq \nP^r$ is arithmetically Cohen--Macaulay, its graded Betti numbers are the same as those of the homogeneous coordinate ring of its general hyperplane section $\Gamma \subseteq \nP^{r-1}$. On the other hand, the lines spanned by the fibers of the hyperelliptic fibration $f \colon C \to \nP^1$ sweep out a rational normal surface scroll 
$$
S:=\bigcup_{t \in \nP^1} \langle f^{-1}(t) \rangle \subseteq \nP^r. 
$$
Then $\Gamma$ is contained in a general hyperplane section of $S \subseteq \nP^r$, which is a rational normal curve in $\nP^{r-1}$. The graded Betti numbers of a finite subscheme of a rational normal curve are computed in \cite[Proposition 2.3]{NP94}. 
\end{proof}

Corollary \ref{cor:inequality} and Theorem \ref{thm:main2} follow at once from Theorem \ref{thm:bettihyperelliptic} and the previous discussion.

\begin{proof}[Proof of Corollary \ref{cor:inequality}]
Recall that we assume $d \gg 0$ and $\kappa_{r-p,1}(C,L)$ is a polynomial in $d$ whose degree is $\dim C_p^1$ and the leading coefficient is positive.
When $C_p^1$ has the expected dimension, the degree of $\kappa_{r-p,1}(C,L)$ is minimal, so the left inequality follows from Theorem \ref{thm:bettinumber}. On the other hand, if $C_p^1$ has the largest possible dimension, then $C$ is hyperelliptic. In this case, the degree of $\kappa_{r-p,1}(C,L)$ is maximal, and hence, the right inequality follows from Theorem \ref{thm:bettihyperelliptic}. The characterization of the equality cases for $\kappa_{r-p,1}(C,L)$ is also easily deduced. Finally, for fixed $d$ and $g$, the difference $\kappa_{r-p,1}(C,L) - \kappa_{r-p-1,2}(C,L)$ is constant. This implies the corresponding statements for $\kappa_{r-p-1,2}(C,L)$. 
\end{proof}

The following theorem, together with Theorem \ref{thm:bettihyperelliptic}, proves Theorem \ref{thm:main2}.

\begin{theorem}
Assume that $C$ is not hyperelliptic and $H^1(C, L)=0$. Then 
$$
\kappa_{r-g, 2}(C, L) = h^0(C_{g-1}, S_{g-1, A})= { h^0(C, L \otimes \omega_C^{-1}) + g - 2 \choose g-1}.
$$
\end{theorem}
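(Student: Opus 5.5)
We take $p=g-1$, where $\kappa_{r-g,2}(C,L)=\kappa_{g-1,0}(C,\omega_C;L)=h^0(C_{g-1},\mathscr{F}_{g-1,\omega_C}\otimes N_{g-1,L})$. This uses the identification from Subsection \ref{subsec:symmetric}, which requires $H^1(C,L)=0$ and $H^0(C,\omega_C\otimes L^{-1})=0$; both hold since $d\ge 2g+1$. Since $C$ is not hyperelliptic, Martens' theorem gives $\dim W^1_{g-1}(C)=g-4$. Hence $\dim C^1_{g-1}=g-3$, so the degeneracy locus of $\ev_{g-1,\omega_C}\colon H^0(C,\omega_C)\otimes\sO_{C_{g-1}}\to E_{g-1,\omega_C}$ has the expected codimension $g-(g-1)+1=2$.

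The Buchsbaum--Rim complex $\operatorname{BR}_{g-1,\bullet}$ of the proof of Theorem \ref{thm:bettinumber} then has length $g-p-1=0$. It reduces to the single term
$$\wedge^g H^0(C,\omega_C)\otimes N_{g-1,\omega_C}^{-1}\cong N_{g-1,\omega_C}^{-1}\cong \mathscr{F}_{g-1,\omega_C}.$$
One can also see this directly: the kernel of a generically surjective map from a rank $g$ trivial bundle to a rank $g-1$ bundle is reflexive of rank one, hence a line bundle, and its first Chern class is $-c_1(E_{g-1,\omega_C})$ because the cokernel is supported in codimension $2$. Therefore
$$\mathscr{F}_{g-1,\omega_C}\otimes N_{g-1,L}=N_{g-1,\omega_C}^{-1}\otimes N_{g-1,L}=S_{g-1,A},\qquad A:=L\otimes\omega_C^{-1},$$
using $S_{p,B}\otimes S_{p,B'}=S_{p,B\otimes B'}$ and $N_{p,B}=S_{p,B}(-\delta_p)$. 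This gives the first equality $\kappa_{r-g,2}(C,L)=h^0(C_{g-1},S_{g-1,A})$.

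For the second equality we compute $h^0(C_p,S_{p,A})$ with $p=g-1$. Pushing forward from $C^p$, the sections of $S_{p,A}=\sigma_*(A^{\boxtimes p})^{\mathfrak S_p}$ are the symmetric invariants, namely $H^0(C_p,S_{p,A})=S^pH^0(C,A)$; the proof of Proposition \ref{prop:vanishing} already used the formula $H^i(C_p,S_{p,A})=S^{p-i}H^0(C,A)\otimes\wedge^iH^1(C,A)$. Hence
$$h^0(C_{g-1},S_{g-1,A})=\dim S^{g-1}H^0(C,A)=\binom{h^0(C,A)+g-2}{g-1}.$$
This count holds without any vanishing of $H^1(C,A)$, which is important here because $A$ may be special, for instance when $d<4g-3$.

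The main obstacle is justifying $\mathscr{F}_{g-1,\omega_C}\cong N_{g-1,\omega_C}^{-1}$ precisely. First, the Buchsbaum--Rim exactness of \cite[Theorem B.2.2]{Lazarsfeld04} needs the degeneracy locus to have codimension exactly $g-p+1=2$, which is what the non-hyperelliptic hypothesis provides through Martens' theorem. Second, we must confirm that the determinantal identification $\wedge^gH^0(\omega_C)\otimes\det E^{\vee}\to\mathscr{F}$ is an isomorphism and not merely an injection with torsion cokernel; this follows because both sheaves are reflexive of rank one and agree off a codimension-$2$ set. Nothing beyond $h^0$ is needed, so no cohomology vanishing enters.
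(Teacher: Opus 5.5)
Your proposal is correct and follows the paper's proof. Martens' theorem gives the expected codimension $2$ for $C^1_{g-1}$, the length-zero Buchsbaum--Rim complex gives $\mathscr{F}_{g-1,\omega_C}\cong N_{g-1,\omega_C}^{-1}$, and $h^0(C_{g-1},S_{g-1,A})=\dim S^{g-1}H^0(C,A)$. One small slip: for $g=3$ the locus $C^1_2$ is empty rather than of dimension $g-3=0$, so $\ev_{2,\omega_C}$ is surjective there (the paper treats this case separately), but your reflexivity argument covers it anyway.
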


\begin{proof}
Recall that $\kappa_{r-g, 2}(C, L) = h^0(C_{g-1}, \mathscr{F}_{g-1, \omega_C} \otimes N_{g-1,L})$. If $g=3$, then $\ev_{2, \omega_C}$ is surjective and $\mathscr{F}_{2, \omega_C}=N_{2, \omega_C}^{-1}$ directly. For $g \geq 4$, by Martens theorem (see e.g., \cite[Theorem IV.5.1]{ACGH85}), $\dim C_{g-1}^1$ has the expected dimension $g-3$. Then the Buchsbaum--Rim complex $\operatorname{BR}_{g-1,\bullet}$ yields that $\mathscr{F}_{g-1, \omega_C} = N_{g-1, \omega_C}^{-1}$. Thus $\kappa_{r-g, 2}(C, L) = h^0(C_{g-1}, S_{g-1, L \otimes \omega_C^{-1}})$, which finishes the proof.
\end{proof}

As an application of Theorem \ref{thm:main2}, we compute the first nonzero graded Betti number of weight 2 for a rational surface with effective anticanonical divisor. The following provides a refinement of \cite[Proposition 3.4]{EGHP05} under a slightly stronger assumption.

\begin{corollary}\label{cor:rationalsurface}
Let $S$ be a smooth projective complex rational surface with $H^0(S, -K_S) \neq 0$, and $H$ be a very ample divisor on $S$ such that $K_S + H$ is base point free and big and $H^i(S, mH)=0$ for $i>0$ and $m>0$. If $-K_S.H \geq 3$, then $\sO_S(H)$ satisfies property $N_{-K_S.H-3}$, i.e., $K_{p,2}(S, \sO_S(H)) = 0$ for $0 \leq p \leq -K_S. H - 3$, and 
$$
\kappa_{-K_S.H-2, 2}(S, \sO_S(H)) = {h^0(S, -K_S) + g-2 \choose g-1},
$$
where $g:=1/2(K_S+H).H+1$ is the genus of a smooth curve in $\lvert H \rvert$. In particular, we have the following:
\begin{enumerate}
    \item $\sO_{\nP^2}(d)$ for $d \geq 4$ satisfies property $N_{3d-3}$, and 
    $$
    \kappa_{3d-2,2}(\nP^2, \sO_{\nP^2}(d)) = { \frac{d^2-3d+18}{2} \choose 9}.
    $$
    \item $\sO_{\nP^1}(a) \boxtimes \sO_{\nP^1}(b)$ for $a,b \geq 3$ satisfies property $N_{2a+2b-3}$, and 
    $$
    \kappa_{2a+2b-2,2}(\nP^1 \times \nP^1, \sO_{\nP^1}(a) \boxtimes \sO_{\nP^1}(b))= {(a-1)(b-1)+7 \choose 8}.
    $$
\end{enumerate}
\end{corollary}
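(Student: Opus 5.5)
The plan is to pass to a general smooth curve section $C \in |H|$ and reduce everything to Theorem \ref{thm:main2} applied to $L := \sO_C(H)$.

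\textbf{Step 1: reduce to $C$.} The vanishing hypotheses $H^i(S, mH)=0$ for $i>0$, $m>0$, together with $H^1(S,\sO_S)=0$ ($S$ rational), make $R(S,\sO_S(H))$ restrict well to $C$. From
$$
0 \to \sO_S((m-1)H) \to \sO_S(mH) \to \sO_C(mH) \to 0
$$
the maps $H^0(S,mH) \to H^0(C,mH)$ are surjective for all $m \geq 0$, so $R(C,L)$ is the quotient of $R(S,\sO_S(H))$ by a section $s \in H^0(S,H)$. Since $S$ is projectively normal under $H$ and its section ring is Cohen--Macaulay (from the $H^1$ vanishings in all degrees, including $m \le 0$ via Kodaira/Serre duality), $s$ is a nonzerodivisor. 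Hence
$$
K_{p,q}(S,\sO_S(H)) = K_{p,q}(C,L)\quad\text{for all } p,q,
$$
by the standard Lefschetz-type hyperplane argument for Koszul cohomology.

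\textbf{Step 2: compute the invariants of $C$.} By adjunction, $\omega_C = \sO_C(K_S+H)$, so
$$
L \otimes \omega_C^{-1} = \sO_C(-K_S)|_C, \qquad d = H^2, \qquad d - (2g-2) = -K_S.H \ge 3 .
$$
Thus $d \geq 2g+1$ and $L$ is nonspecial. Since $r = d-g$, we get $r-g-1 = -K_S.H - 3$.

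Property $N_{-K_S.H-3}$ then follows from Green's $(2g+1+p)$-theorem on $C$. The Betti number $\kappa_{r-g,2}$ sits in position $p = -K_S.H - 2$, and Theorem \ref{thm:main2} gives its value, provided $C$ is not hyperelliptic. Non-hyperellipticity should come from $K_S+H$ being base point free and big: then $|K_C| = |(K_S+H)|_C|$ is at least as good as the restriction of a birational morphism. I expect this step to need care. I would try to show that $H^0(S,K_S+H) \to H^0(C,K_C)$ is surjective (its cokernel sits in $H^1(S,K_S)=0$) and that $K_S+H$ big and base point free forces $\phi_{K_S+H}$ to be birational onto its image, with $C$ mapping birationally. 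If hyperellipticity cannot be excluded outright, I would invoke the hyperelliptic formula instead. Note that this is where the main obstacle lies.

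\textbf{Step 3: identify $h^0(C, L\otimes\omega_C^{-1})$.} We need $h^0(C, -K_S|_C) = h^0(S,-K_S)$. Restriction gives
$$
0 \to \sO_S(-K_S-H) \to \sO_S(-K_S) \to \sO_C(-K_S) \to 0 .
$$
Here $H^0(S,-K_S-H) = 0$, since $-K_S-H$ has negative degree against the big nef divisor $K_S+H$: indeed $(K_S+H)\cdot(-K_S-H) = -(K_S+H)^2 < 0$. Also, by Serre duality, $H^1(S,-K_S-H) = H^1(S,2K_S+H)^\vee$. This vanishes by Kawamata--Viehweg, since $2K_S+H = K_S + (K_S+H)$ with $K_S+H$ nef and big.

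\textbf{Step 4: the examples.} For $\nP^2$ with $H = dL$: $-K.H = 3d$, $g = (d-1)(d-2)/2$, and $h^0(-K) = 10$, giving $\binom{9+g-1}{g-1}$. Since $10 + g - 2 = (d^2-3d+18)/2$ and $g - 1 = (d^2-3d)/2$, this equals $\binom{(d^2-3d+18)/2}{9}$. The hypotheses hold for $d \ge 4$, where $K+H = (d-3)L$ is big.

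For $\nP^1 \times \nP^1$: $-K.H = 2a+2b$, $g = (a-1)(b-1)$, and $h^0(-K) = 9$, giving $\binom{g+7}{8}$. The hypotheses hold for $a, b \ge 3$.
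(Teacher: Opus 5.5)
Your reduction to $C$, the count $d=2g-2+(-K_S.H)$ with Green's $(2g+1+p)$-theorem, the identification $H^0(C,L\otimes\omega_C^{-1})=H^0(S,-K_S)$ via Kawamata--Viehweg, and the numerics for $\nP^2$ and $\nP^1\times\nP^1$ all match the paper's proof. In Step 1 the projective normality and Cohen--Macaulay remarks are unnecessary: $s$ is a nonzerodivisor simply because $R(S,\sO_S(H))$ is a domain.

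The genuine gap is the non-hyperellipticity of $C$, which you leave open. It cannot be sidestepped. Theorem \ref{thm:main2} gives the asserted value $\binom{h^0(C,L\otimes\omega_C^{-1})+g-2}{g-1}$ only for non-hyperelliptic $C$, so falling back on the hyperelliptic formula would not prove the statement. The route you sketch also fails as stated. A base point free big line bundle need not define a birational morphism, and this happens under the corollary's hypotheses. Take a del Pezzo surface $S$ of degree $2$ and $H=-2K_S$. Then $H$ is very ample, $H^i(S,mH)=0$ for $i,m>0$, $-K_S.H=4$, and $K_S+H=-K_S$ is ample and base point free. Yet $\phi_{K_S+H}$ is the double cover $S\to\nP^2$ branched along a quartic. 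The general $C\in|H|$ is still non-hyperelliptic (a smooth plane quartic), so the conclusion holds, but not for the reason you propose.

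The paper closes this step with an adjunction-theoretic result, \cite[Theorem 11.4.2]{BS95}. An elementary alternative builds on what you already observed.

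\begin{itemize}
\item Since $p_g(S)=q(S)=0$, restriction gives $H^0(S,K_S+H)\cong H^0(C,\omega_C)$. Hence $g=h^0(S,K_S+H)\geq 3$ by bigness, and $\phi:=\phi_{K_S+H}$ restricts to the canonical map on every smooth $C\in|H|$.
\item Suppose the smooth members were hyperelliptic with involutions $\iota_C$. Then $\phi(\iota_C(x))=\phi(x)$ for all $x\in C$.
\item Fix a general $x\in S$. Since $\phi$ is generically finite, $\phi^{-1}(\phi(x))$ is finite. So $\iota_C(x)$ lies in a fixed finite set as $C$ runs through the irreducible family of smooth curves in $|H|$ through $x$.
\item For a fixed $y\neq x$, the curves with $\iota_C(x)=y$ all pass through $y$. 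This is a proper closed condition because $H$ is very ample.
\item The curves with $\iota_C(x)=x$ are those for which $x$ is a Weierstrass point of $C$. For general $x$ they form a proper closed subset, since a general point of a general $C$ is not a Weierstrass point.
\item An irreducible family cannot be a finite union of proper closed subsets, which is a contradiction.
\end{itemize}

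In your two examples this step is immediate anyway, since $K_S+H$ is very ample there, so the canonical map of $C$ is an embedding.
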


\begin{proof}
Let $C \in \lvert H \rvert$ be a general smooth curve, and $L:=\sO_C(H|_C)$. 
By \cite[Theorem 3.b.7]{Green84} (see also \cite[Theorem 2.21]{AN10}), $K_{p,2}(S, \sO_S(H)) = K_{p,2}(C,L)$ for $p \geq 0$. Note that
$$
\deg L = H^2 = (K_S+H).H - K_S.H = 2g+1 + (-K_S.H-3). 
$$
By Green's $(2g+1+p)$-theorem, $K_{p,2}(C,L)=0$ for $0 \leq p \leq -K_S.H-3$. Observe that $-K_S|_C = (-K_S-H)_C + H|_C = -K_C + H|_C$. 
We have a short exact sequence
$$
0 \longrightarrow \sO_S(-K_S - H) \longrightarrow \sO_S(-K_S) \longrightarrow L \otimes \omega_C^{-1} \longrightarrow 0. 
$$
By Kawamata--Viehweg vanishing theorem, $H^0(S, -K_S - H) = H^1(S, -K_S-H)=0$. Thus $H^0(C, L \otimes \omega_C^{-1})=H^0(S, -K_S)$. We can easily check that $C$ is not hyperelliptic using \cite[Theorem 11.4.2]{BS95}. Then the assertion follows from Theorem \ref{thm:main2}.
\end{proof}

The result that $\sO_{\nP^2}(d)$ for $d \geq 3$ satisfies property $N_{3d-3}$ but fails to satisfy property $N_{3d-2}$ was established in \cite[Theorem 2.1 and Proposition 3.2]{OP01}. The exact value of $\kappa_{3d-2,2}(\nP^2, \sO_{\nP^2}(d))$ was conjectured in \cite[Conjecture 1.6]{CCDL19} and verified in \cite[Theorem 4]{Lemmens18}. Our proof of property $N_p$ is essentially the same as that in \cite{OP01}, but our computation of the first nonzero graded Betti number of weight 2 is entirely different from the existing approaches.

\begin{remark}
According to adjunction theory (see e.g., \cite{BS95}), if $S$ is a smooth projective complex surface and $H$ is a very ample divisor on $S$, then $K_S + H$ is base point free and big in most cases.
\end{remark}

Finally, we compute $\kappa_{r-g+1,2}(C, L)$, the second graded Betti number that may be nonzero, under the condition that $C$ is neither hyperelliptic, trigonal, bielliptic, nor a smooth plane quintic. In particular, $g \geq 5$. By Mumford theorem (see e.g., \cite[Theorem IV.5.2]{ACGH85}), this condition guarantees that $C_{g-2}^1$ has the expected dimension. 
When $H^0(C, \omega_C^2 \otimes L^{-1})=H^1(C, L \otimes \omega_C^{-1})^{\vee}=0$, the exact value of $\kappa_{r-g+1, 2}(C, L)$ is given in Theorem \ref{thm:bettinumber}. We only need to consider the case that $H^0(C, \omega_C^2 \otimes L^{-1}) \neq 0$. But we assume a stronger condition as follows.

\begin{theorem}\label{thm:kappa_{r-g+1,2}}
Assume the following:
\begin{enumerate}
    \item $C$ is neither hyperelliptic, trigonal, bielliptic, nor a smooth plane quintic.
    \item $L$ is nonspecial, i.e., $H^1(C, L)=0$. 
    \item $\omega_C^2 \otimes L^{-1}$ is base point free, and either $H^1(C, \omega_C^2 \otimes L^{-1})=0$ or $h^0(C, \omega_C^2 \otimes L^{-1}) \geq 3$ and the multiplication map $H^0(C, \omega_C) \otimes H^0(C, \omega_C^2 \otimes L^{-1}) \rightarrow H^0(C, \omega_C^3 \otimes L^{-1})$ is surjective.
\end{enumerate}
Then 
$$
\kappa_{r-g+1, 2}(C, L) = g { h^0(C, L \otimes \omega_C^{-1})+g-3 \choose g-2}.
$$
\end{theorem}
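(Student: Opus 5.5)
We follow the proof of Theorem \ref{thm:bettinumber} with $p=g-2$. We have $\kappa_{r-g+1,2}(C,L)=h^0(C_{g-2},\mathscr{F}_{g-2,\omega_C}\otimes N_{g-2,L})$. By hypothesis (1) and Mumford's theorem, $C_{g-2}^1$ has the expected dimension $g-5$, i.e.\ the expected codimension $3$. So $\mathscr{F}_{g-2,\omega_C}$ is resolved by the short Buchsbaum--Rim complex
$$
0 \to \wedge^g H^0(\omega_C)\otimes E_{g-2,\omega_C}^{\vee}\otimes N_{g-2,\omega_C}^{-1} \to \wedge^{g-1}H^0(\omega_C)\otimes N_{g-2,\omega_C}^{-1} \to \mathscr{F}_{g-2,\omega_C}\to 0.
$$
Set $A:=L\otimes\omega_C^{-1}$ and twist by $N_{g-2,L}$. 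We obtain
$$
0\to E^{\vee}_{g-2,\omega_C}\otimes S_{g-2,A}\to H^0(\omega_C)^{\vee}\otimes S_{g-2,A}\to \mathscr{F}_{g-2,\omega_C}\otimes N_{g-2,L}\to 0,
$$
using $\wedge^{g-1}H^0(\omega_C)\cong H^0(\omega_C)^{\vee}$. Since $H^1(C,A)$ may now be nonzero, Proposition \ref{prop:vanishing} no longer applies. Instead, we compute $h^0$ from the long exact sequence:
$$
h^0(\mathscr{F}\otimes N_{g-2,L}) = g\,h^0(S_{g-2,A}) - h^0(E^{\vee}_{g-2,\omega_C}\otimes S_{g-2,A}) + \dim\ker\big(H^1(E^{\vee}\otimes S_{g-2,A})\to H^0(\omega_C)^{\vee}\otimes H^1(S_{g-2,A})\big).
$$
Write $a:=h^0(A)$. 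Then $h^0(S_{g-2,A})=\dim S^{g-2}H^0(A)=\binom{a+g-3}{g-2}$, which is exactly the claimed answer divided by $g$. So the goal becomes proving
$$
H^0(C_{g-2},E^{\vee}_{g-2,\omega_C}\otimes S_{g-2,A})=0
$$
together with injectivity of the map on $H^1$.

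To handle these two terms, pull back along $\sigma_{g-3,1}\colon C_{g-3}\times C\to C_{g-2}$, as in Proposition \ref{prop:vanishing}. We use the sequence
$$
0\to E^{\vee}_{g-3,\omega_C}\boxtimes\sO_C\to\sigma^*E^{\vee}_{g-2,\omega_C}\to(\sO\boxtimes\omega_C^{-1})(D_{g-3,1})\to0.
$$
Alternatively, Serre duality on $C_{g-2}$ converts the relevant groups into groups like $H^{g-2}(E_{g-2,\omega_C}\otimes S_{g-2,\omega_C^2\otimes L^{-1}}\otimes N_{g-2,\sO}^{\ldots})$. Here hypothesis (3) on $B':=\omega_C^2\otimes L^{-1}$ should enter: the relevant groups are expressed via Künneth and the formula $H^i(C_p,S_{p,B'})=S^{p-i}H^0(B')\otimes\wedge^iH^1(B')$.

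Concretely, $H^1(S_{g-2,A})=S^{g-3}H^0(A)\otimes H^1(A)$, and $H^1(A)^{\vee}=H^0(B')$. Dualizing, the injectivity on $H^1$ becomes the statement that a multiplication-type map of the form
$$
H^0(\omega_C)\otimes S^{g-3}H^0(A)^{\vee}\otimes H^0(B')\to(\cdots)
$$
is surjective. This should reduce, via the Koszul description of $E_{g-2,\omega_C}$, to the surjectivity of
$$
H^0(\omega_C)\otimes H^0(B')\to H^0(\omega_C\otimes B').
$$
That surjectivity is assumed in (3) in the case $h^0(B')\ge3$. In the case $H^1(B')=0$, it follows from the base-point-free pencil trick / Green's $H^0$-lemma, since $B'$ is base point free.

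The vanishing $H^0(E^{\vee}_{g-2,\omega_C}\otimes S_{g-2,A})=0$ is the easier half. Sections of $E^{\vee}\otimes S_{g-2,A}$ pull back to sections on $C_{g-3}\times C$. The subsheaf part is handled by induction: the base case on $C$ is $H^0(\omega_C^{-1}\otimes A)$, which vanishes for degree reasons after twisting. The quotient part is $(\sO\boxtimes A\otimes\omega_C^{-1})(D)$, which one controls by pushing forward to $C$.

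I expect the main obstacle to be the identification of the connecting map on $H^1$ with the multiplication map above. In particular, $H^1(E^{\vee}_{g-2,\omega_C}\otimes S_{g-2,A})$ has to be computed exactly, presumably as $S^{g-3}H^0(A)\otimes H^0(\omega_C\otimes B')^{\vee}$ via the $\sigma$-pullback and Leray. The base point freeness of $B'$ is needed for the $D_{g-3,1}$-twisted terms to behave. I would first treat the case $H^1(B')=0$, where everything is cleaner, and then adapt it to the second alternative in (3).
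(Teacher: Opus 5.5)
Your reduction matches the paper's: the same two-term Buchsbaum--Rim resolution and long exact sequence, and the same goals, namely $H^0(C_{g-2},E^{\vee}_{g-2,\omega_C}\otimes S_{g-2,A})=0$ and injectivity of $H^1(E^{\vee}_{g-2,\omega_C}\otimes S_{g-2,A})\to H^0(\omega_C)^{\vee}\otimes H^1(S_{g-2,A})$. The gap is that the step carrying all the content is only guessed, and your guess is incomplete. The missing ingredient is an explicit formula. The paper gets it from $E_{g-2,\omega_C}\otimes N_{g-2,B'}=\sigma_{g-3,1,*}\big((N_{g-3,B'}\boxtimes\omega_C\otimes B')(-D_{g-3,1})\big)$, the Leray spectral sequence for $\pr_2$, and \cite[Lemma 2.4]{NP24+}; this is where base point freeness of $B'$ enters. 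The formula is
\begin{align*}
H^{i}(C_{g-2},E_{g-2,\omega_C}\otimes N_{g-2,B'}) ={}& \big(H^1(C,\wedge^{g-2-i}M_{B'}\otimes\omega_C\otimes B')\otimes S^{i-1}H^1(C,B')\big)\\
&\oplus\big(H^0(C,\wedge^{g-3-i}M_{B'}\otimes\omega_C\otimes B')\otimes S^{i}H^1(C,B')\big).
\end{align*}
For $i=g-2$ this gives $H^1(C,\omega_C\otimes B')\otimes S^{g-3}H^1(C,B')=0$, since $\deg B'>0$, and that is the $H^0$-vanishing directly. For $i=g-3$ (Serre dual to your $H^1$), there is, besides the summand you predicted, a second term $H^1(C,M_{B'}\otimes\omega_C\otimes B')\otimes S^{g-4}H^1(C,B')$. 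It vanishes because $H^1(C,M_{B'}\otimes\omega_C\otimes B')=H^0(C,\wedge^{r(B')-1}M_{B'})^{\vee}=0$ when $r(B')\ge 2$. This is the real role of the hypothesis $h^0(B')\ge 3$. For a base point free pencil with $H^1(B')\neq 0$ this term is nonzero, so the dual map is not simply the multiplication map tensored with $S^{g-3}H^1(C,B')$. Only after killing it does your reduction to the multiplication map go through.

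Your treatment of the branch $H^1(C,B')=0$ is wrong as stated. Surjectivity of $H^0(\omega_C)\otimes H^0(B')\to H^0(\omega_C\otimes B')$ does not follow from the base point free pencil trick or Green's $H^0$-lemma, and it fails in general. Take a general curve of genus $6$ with a $4$-gonal map $f$, and set $B'=f^*\sO_{\nP^1}(2)$. Then $B'$ is base point free and nonspecial with $H^0(B')=f^*H^0(\sO_{\nP^1}(2))$. So every product $\eta\cdot f^*t$ has trace $t\cdot\operatorname{tr}_f(\eta)=0$, since $\operatorname{tr}_f(\eta)\in H^0(\omega_{\nP^1})=0$. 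But the pull-back of a nonzero section of $\omega_{\nP^1}(2)$ has nonzero trace, so it is not in the image. Nothing of this kind is needed. Since $H^1(C,B')^{\vee}=H^0(C,A)$, the condition $H^1(B')=0$ forces $H^0(A)=0$, so the claimed value is $0$. Moreover, as $g\ge 5$, both summands above vanish for $i=g-3$. Hence $H^1(E^{\vee}_{g-2,\omega_C}\otimes S_{g-2,A})=0$, and the injectivity is trivial.
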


\begin{proof}
By Mumford theorem (see e.g., \cite[Theorem IV.5.2]{ACGH85}), the condition $(1)$ implies that $\dim C_{g-2}^1 = g-5 = 2(g-2)-g-1$. Then the Buchsbaum--Rim complex $\operatorname{BR}_{g-2,\bullet}$ yields a short exact sequence
$$
0 \longrightarrow E_{g-2, \omega_C}^{\vee} \otimes N_{g-2, \omega_C}^{-1} \longrightarrow H^0(C, \omega_C)^{\vee} \otimes N_{g-2, \omega_C}^{-1} \longrightarrow \mathscr{F}_{g-2, \omega_C} \longrightarrow 0. 
$$
By the condition $(2)$, we have
$$
\kappa_{r-g+1, 2}(C, L) = 
h^0(C_{g-2}, \mathscr{F}_{g-2, \omega_C} \otimes N_{g-2, L}).
$$
We show that the condition $(3)$ implies that the map
$$
H^1(C_{g-2}, E_{g-2, \omega_C}^{\vee} \otimes S_{g-2, L \otimes \omega_C^{-1}}) \longrightarrow H^0(C, \omega_C)^{\vee} \otimes H^1(C_{g-2}, S_{g-2, L \otimes \omega_C^{-1}})
$$
is injective. It is equivalent to that the dual map
$$
H^0(C, \omega_C) \otimes H^{g-3}(C_{g-2}, N_{g-2, \omega_C^2 \otimes L^{-1}}) \longrightarrow H^{g-3}(C_{g-2}, E_{g-2, \omega_C} \otimes N_{g-2, \omega_C^2 \otimes L^{-1}})
$$
is surjective. Considering the Leray spectral sequence for the second projection $\pr_2 \colon C_{g-3} \times C \to C$ and applying \cite[Lemma 2.4]{NP24+}, we find
\begin{equation}\label{eq:H^iC_{g-2}}
\begin{aligned}
&H^{i}(C_{g-2}, E_{g-2, \omega_C} \otimes N_{g-2, \omega_C^2 \otimes L^{-1}})=H^{i}(C_{g-3} \times C, (N_{g-3, \omega_C^2 \otimes L^{-1}} \boxtimes \omega_C^3 \otimes L^{-1})(-D_{g-3, 1}))\\
&= 
\begin{array}{l}
(H^1(C, \wedge^{g-2-i}M_{\omega_C^2 \otimes L^{-1}} \otimes \omega_C^3 \otimes L^{-1}) \otimes S^{i-1} H^1(C, \omega_C^2 \otimes L^{-1}))\\[5pt]
\oplus (H^0(C, \wedge^{g-3-i}M_{\omega_C^2 \otimes L^{-1}} \otimes \omega_C^3 \otimes L^{-1}) \otimes S^{i} H^1(C, \omega_C^2 \otimes L^{-1}))
\end{array}~~ \text{ for $i \geq 0$}.
\end{aligned}
\end{equation}
By $(\ref{eq:H^iC_{g-2}})$ for $i=g-3$, we have
$$
 H^{g-3}(C_{g-2}, E_{g-2, \omega_C} \otimes N_{g-2, \omega_C^2 \otimes L^{-1}})=\begin{array}{l}
(H^1(C, M_{\omega_C^2 \otimes L^{-1}} \otimes \omega_C^3 \otimes L^{-1}) \otimes S^{g-4} H^1(C, \omega_C^2 \otimes L^{-1}))\\[5pt]
\oplus (H^0(C, \omega_C^3 \otimes L^{-1}) \otimes S^{g-3} H^1(C, \omega_C^2 \otimes L^{-1})).
\end{array}
$$
If $H^1(C, \omega_C^2 \otimes L^{-1})=0$, then $H^{g-3}(C_{g-2}, E_{g-2, \omega_C} \otimes N_{g-2, \omega_C^2 \otimes L^{-1}})=0$ so that the above dual map is clearly surjective. We assume that $H^1(C, \omega_C^2 \otimes L^{-1}) \neq 0$. Then $r(\omega_C^2 \otimes L^{-1}) \geq 2$. Note that
\begin{align*}
&H^{g-3}(C_{g-2},N_{g-2, \omega_C^2 \otimes L^{-1}})
= H^0(C, \omega_C^2 \otimes L^{-1}) \otimes S^{g-3} H^1(C, \omega_C^2 \otimes L^{-1})\\
&H^1(C, M_{\omega_C^2 \otimes L^{-1}} \otimes \omega_C^3 \otimes L^{-1})
= H^0(C, \wedge^{r(\omega_C^2 \otimes L^{-1})-1} M_{\omega_C^2 \otimes L^{-1}})^{\vee}=0. 
\end{align*}
Thus the above dual map is obtained by tensoring the multiplication map
$$
H^0(C, \omega_C) \otimes H^0(C, \omega_C^2 \otimes L^{-1}) \longrightarrow H^0(C, \omega_C^3 \otimes L^{-1})
$$
with $S^{g-3} H^1(C, \omega_C^2 \otimes L^{-1})$, so it is surjective by the condition $(2)$. Now, we obtain
$$
h^0(C_{g-2}, \mathscr{F}_{g-2, \omega_C} \otimes N_{g-2, L})
= g h^0(C_{g-2}, S_{g-2, L \otimes \omega_C^{-1}}) - h^0(C_{g-2}, E_{g-2, \omega_C}^{\vee} \otimes S_{g-2, L \otimes \omega_C^{-1}}).
$$
By (\ref{eq:H^iC_{g-2}}) for $i=g-2$, we have
\begin{align*}
& H^0(C_{g-2}, E_{g-2, \omega_C}^{\vee} \otimes S_{g-2, L \otimes \omega_C^{-1}})
 = H^{g-2}(C_{g-2}, E_{g-2, \omega_C} \otimes N_{g-2, \omega_C^2 \otimes L^{-1}})^{\vee}\\
& = H^1(C, \omega_C^3 \otimes L^{-1})^{\vee} \otimes S^{g-3} H^1(C, \omega_C^2 \otimes L^{-1})^{\vee}.
\end{align*}
As $\deg (\omega_C^2 \otimes L^{-1})>0$, we get $ H^1(C, \omega_C^3 \otimes L^{-1})=0$. Thus the assertion follows. 
\end{proof}

\begin{remark}
Assume that $\omega_C^2 \otimes L^{-1}$ is base point free and $H^1(C, \omega_C^2 \otimes L^{-1}) \neq 0$. By \cite[Theorem 1.11]{Ciliberto83}, the multiplication map $H^0(C, \omega_C) \otimes H^0(C, \omega_C^2 \otimes L^{-1}) \rightarrow H^0(C, \omega_C^3 \otimes L^{-1})$ is surjective when the image of the morphism induced by $\lvert \omega_C^2 \otimes L^{-1} \rvert$ is not a rational curve. 
\end{remark}

As a quick application of Theorem \ref{thm:kappa_{r-g+1,2}}, we can recover Agostini's result on the secant conjecture \cite[Main Theorem]{Agostini24}.

\begin{corollary}[Agostini]\label{cor:Agostini}
Assume that $C$ is neither hyperelliptic, trigonal, bielliptic, nor a smooth plane quintic. If $\deg L = 2g+p-1$ and $H^1(C, L)=0$, then $L$ is $(p+1)$-very ample if and only if $K_{p,2}(C,L)=0$. 
\end{corollary}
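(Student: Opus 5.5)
The plan is to split the equivalence into its two directions. The easy direction (failure of $(p+1)$-very ampleness forces $K_{p,2}(C,L)\neq 0$) comes from the literature. The substantive direction (that $(p+1)$-very ampleness forces $K_{p,2}(C,L)=0$) should follow from Theorem \ref{thm:kappa_{r-g+1,2}}.

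\emph{Setup.} Since $H^1(C,L)=0$, we have $r=d-g=g+p-1$, so $p=r-g+1$ and $K_{p,2}(C,L)=K_{r-g+1,2}(C,L)$. Put
$$
A:=L\otimes\omega_C^{-1}, \qquad B:=\omega_C^2\otimes L^{-1}=\omega_C\otimes A^{-1},
$$
so that $\deg A=p+1$ and $\deg B=2g-3-p$.

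\emph{Reformulating very ampleness.} For $\xi\in C_{p+2}$, the bundle $L(-\xi)$ has degree $2g-3$. Hence $\xi$ imposes independent conditions on $|L|$ iff $h^0(\omega_C\otimes L^{-1}(\xi))=0$, and this line bundle has degree $1$. So $L$ fails to be $(p+1)$-very ample iff $\omega_C\otimes L^{-1}(\xi)\cong\sO_C(x)$ for some $x\in C$ and $\xi\in C_{p+2}$. Equivalently, $A(x)$ is effective for some $x\in C$. Thus
$$
L \text{ is $(p+1)$-very ample} \iff h^0(C,A(x))=0 \text{ for all } x\in C .
$$

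\emph{Hard direction.} Assume $h^0(A(x))=0$ for all $x$. Then $h^0(A)=0$, so $H^1(C,B)=H^0(C,A)^\vee=0$. By Riemann--Roch,
$$
h^0(B)=g-p-2 \quad\text{and}\quad h^0(B(-x))=h^1(A(x))=g-p-3 \ \text{ for every } x,
$$
so $B$ is base point free. Hence conditions (1)--(3) of Theorem \ref{thm:kappa_{r-g+1,2}} hold, using the first alternative $H^1(C,B)=0$ in (3). The theorem then gives
$$
\kappa_{p,2}(C,L)=g\binom{h^0(A)+g-3}{g-2}=g\binom{g-3}{g-2}=0 .
$$

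\emph{Main obstacle: boundary cases.} When $p\ge g-2$, the base-point-freeness step becomes degenerate, since $h^0(B)\le 0$. I would dispose of this range separately. If $h^0(B)=0$ while $\deg B\geq 0$, then $B$ is base point free only when $B\cong\sO_C$, i.e.\ $L=\omega_C^2$ and $p=2g-3$. Otherwise $\deg B<0$, and a direct degree count shows $(p+1)$-very ampleness is impossible, because $A(x)$ has degree $p+2>g$ and so is effective. In the remaining case $L=\omega_C^2$, one checks directly whether the hypotheses (or the conclusion) still apply.

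\emph{Easy direction.} Suppose $L$ is not $(p+1)$-very ample. I would invoke the Green--Lazarsfeld nonvanishing for the secant conjecture \cite{GL86}: a $(p+2)$-secant $p$-plane produces a nonzero class in $K_{p,2}(C,L)$. The formula cannot be used here in general. If $h^0(A)=0$ but $A(x)$ is effective, then $x$ is a base point of $B$, so condition (3) fails and the formula is unavailable (indeed it would output $0$). In the subcase $h^0(A)>0$, one could alternatively get positivity from Theorem \ref{thm:kappa_{r-g+1,2}} whenever condition (3) holds, since $g\binom{h^0(A)+g-3}{g-2}>0$. The cleanest route, however, is simply to cite the known easy direction.
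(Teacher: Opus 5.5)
Your proposal is correct and is essentially the paper's proof. The easy direction is the known secant nonvanishing (the paper cites \cite[Theorem 4.36]{AN10}), and the hard direction applies Theorem \ref{thm:kappa_{r-g+1,2}} with $h^0(C,L\otimes\omega_C^{-1})=0$; your Riemann--Roch check that $\omega_C^2\otimes L^{-1}$ is base point free with vanishing $H^1$ just replaces the paper's citation of \cite[Lemma 2.2]{Agostini24}. Your boundary-case paragraph is unnecessary, and its claim about base point freeness when $h^0(B)=0$ is garbled, since such a $B$ is never globally generated. For $p\ge g-2$ one has $\deg A(x)=p+2\ge g$, so $A(x)$ is always effective and $L$ is never $(p+1)$-very ample; the hard direction is therefore vacuous in that range, including the case $L=\omega_C^2$.
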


\begin{proof}
If $L$ is not $(p+1)$-very ample, then $K_{p,2}(C,L) \neq 0$ by \cite[Theorem 4.36]{AN10}. Assume that $L$ is $(p+1)$-very ample. By \cite[Lemma 2.2]{Agostini24}, $\omega_C^2 \otimes L^{-1}$ is base point free and $H^1(C, \omega_C^2 \otimes L^{-1})=0$. Then Theorem \ref{thm:kappa_{r-g+1,2}} shows that $K_{p,2}(C,L)=0$ since $r-g+1=(2g+p-1-g)-g+1=p$. 
\end{proof}

\begin{remark}\label{rem:Agostini}
As the Buchsbaum--Rim complex was already used in Agostini's work \cite[Lemma 4.3]{Agostini24}, employing it to study syzygies of algebraic curves is not a new idea in itself. Our contribution is that we use the Buchsbaum--Rim complex to compute the graded Betti numbers exactly when the evaluation map $\ev_{p, \omega_C}$ is not surjective. Moreover, our approach does not encounter the exceptional case that was carefully treated in \cite[Section 5]{Agostini24}. 
\end{remark}

Using Theorem \ref{thm:kappa_{r-g+1,2}}, one can extend Corollary \ref{cor:rationalsurface} to the second potentially nonzero graded Betti numbers of weight 2 to some extent. Here we only focus on the cases of $\nP^2$ and $\nP^1 \times \nP^1$. 

\begin{corollary}
We have
$$
\kappa_{3d-1,2}(\nP^2, \sO_{\nP^2}(d)) = \frac{(d-1)(d-2)}{2} { \frac{d^2-3d+16}{2} \choose 9} ~~ \text{ for $d \geq 7$}
$$
and
$$
\kappa_{2a+2b-1,2}(\nP^1 \times \nP^1, \sO_{\nP^1}(a) \boxtimes \sO_{\nP^1}(b))= (a-1)(b-1) {(a-1)(b-1)+6 \choose 8} ~~\text{ for $a, b \geq 5$}.
$$
\end{corollary}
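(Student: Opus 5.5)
The plan is to deduce both formulas from Theorem \ref{thm:kappa_{r-g+1,2}}, in the same way Corollary \ref{cor:rationalsurface} was deduced from Theorem \ref{thm:main2}. Take a general smooth curve $C \in \lvert H \rvert$ and set $L:=\sO_C(H|_C)$. By \cite[Theorem 3.b.7]{Green84}, $K_{p,2}(S,\sO_S(H)) = K_{p,2}(C,L)$. So it suffices to check hypotheses $(1)$--$(3)$ of Theorem \ref{thm:kappa_{r-g+1,2}} for $(C,L)$, to check that $r-g+1$ equals the stated homological index, and to compute $h^0(C, L\otimes\omega_C^{-1})$.

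\emph{Case $\nP^2$, $H=dh$.} Here $C$ is a smooth plane curve of degree $d$ with $g=(d-1)(d-2)/2$, $L=\sO_C(d)$ and $\omega_C=\sO_C(d-3)$. Then $\deg L=d^2$ and $r=d^2-g$, so $r-g+1 = d^2-(d^2-3d+2)+1=3d-1$, as required. Next, $L\otimes\omega_C^{-1}=\sO_C(3)$ has $h^0=10$ for $d\ge 4$, because restriction from $\nP^2$ is an isomorphism on these sections. Hence the theorem gives $g\binom{g+7}{g-2}=g\binom{g+7}{9}$, and $g+7=(d^2-3d+16)/2$.

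For the hypotheses: the gonality of $C$ is $d-1\ge 6$. This excludes hyperelliptic and trigonal curves, and also bielliptic curves, since for $g\ge 6$ those have gonality at most $4$. Plane quintics are excluded because $d\ge 7$. So $(1)$ holds. Hypothesis $(2)$ holds since $H^1(L)=H^0(\sO_C(-3))^\vee=0$. For $(3)$, $\omega_C^2\otimes L^{-1}=\sO_C(d-6)$ is base point free with $h^0\ge 3$ when $d\ge 7$. Its $H^1$ is nonzero, so we need the multiplication map $H^0(\sO_C(d-3))\otimes H^0(\sO_C(d-6))\to H^0(\sO_C(2d-9))$ to be surjective. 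I would prove this by noting that the restriction maps $H^0(\nP^2,\sO(m))\to H^0(C,\sO_C(m))$ are surjective, since $H^1(\nP^2,\sO(m-d))=0$, and that multiplication of polynomials on $\nP^2$ is surjective.

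\emph{Case $\nP^1\times\nP^1$, $H$ of type $(a,b)$.} Here $g=(a-1)(b-1)$ and $\omega_C=\sO_C(a-2,b-2)$. Then $\deg L=2ab$, so $r-g+1=2ab-2g+1=2a+2b-1$. Also $L\otimes\omega_C^{-1}=\sO_C(2,2)$ has $h^0=9$, because $H^0$ and $H^1$ of $\sO(2-a,2-b)$ vanish by K\"unneth. This gives $g\binom{g+6}{8}$.

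For the hypotheses: the gonality of $C$ is $\min(a,b)\ge 5$, which gives $(1)$ by the same reasoning as before (a plane quintic has gonality $4$). Hypothesis $(2)$ follows from $H^1(L)=H^0(\sO_C(2-2a,2-2b))^\vee=0$. For $(3)$, $\sO_C(a-4,b-4)$ is base point free and has $h^0\ge (a-3)(b-3)\ge 4$. For the multiplication map into $H^0(\sO_C(2a-6,2b-6))$, I would again lift to $\nP^1\times\nP^1$, where multiplication of bihomogeneous forms is surjective.

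The only delicate point, and the one I expect to be the main obstacle, is the surjectivity of the restriction maps. This requires $H^1$ of $\sO(-2,-2)$, $\sO(-4,-4)$ and $\sO(a-6,b-6)$ to vanish. The first two vanish by K\"unneth. The last vanishes because $a-6,b-6\ge -1$, and this is exactly where $a,b\ge 5$ is used (just as $d\ge 7$ is used in the planar case).
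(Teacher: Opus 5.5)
Your proposal is correct and follows the paper's proof. The paper simply asserts that the hypotheses of Theorem \ref{thm:kappa_{r-g+1,2}} hold for a general $C\in\lvert H\rvert$ and then argues as in Corollary \ref{cor:rationalsurface}; your explicit checks of hypotheses (1)--(3), of the index $r-g+1$, and of $h^0(C,L\otimes\omega_C^{-1})=10$ (resp.\ $9$) supply exactly what the paper leaves to the reader.

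One harmless slip: on $\nP^1\times\nP^1$, Serre duality gives $H^1(C,L)\cong H^0(C,\sO_C(-2,-2))^\vee$, not $H^0(C,\sO_C(2-2a,2-2b))^\vee$, though both vanish. Also, for surjectivity of the multiplication map only the restriction to $\sO_C(2a-6,2b-6)$ needs to be surjective, not all three restrictions.
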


\begin{proof}
Set $H:=\sO_{\nP^2}(d)$ for $d \geq 7$ or $\sO_{\nP^1}(a) \boxtimes \sO_{\nP^1}(b)$ for $a,b \geq 5$, and let $C \in \lvert H \rvert$ be a general smooth curve and $L:=H|_C$. Then the conditions of Theorem \ref{thm:kappa_{r-g+1,2}} hold. 
Arguing as in the proof of Corollary \ref{cor:rationalsurface}, one can easily show the assertions by applying Theorem \ref{thm:kappa_{r-g+1,2}}.
\end{proof}

\section{Boij--S\"{o}derberg coefficients of algebraic curves}\label{sec:BS}
\noindent This section is devoted to the proof of Theorem \ref{thm:main3}. We always assume that $d=\deg L$ is large enough so that the gonality conjecture holds, i.e., $K_{p,1}(C,L)=0$ for $p \geq r-k+1$ (see \cite[Theorem 1.1]{NP24+}). Let $c_{k-1}, c_k, \ldots, c_g$ be the Boij--S\"{o}derberg coefficients of the section ring $R(C,L)$ as in Subsection \ref{subsec:BS}. Recall from (\ref{eq:BSforcurves}) that
$$
\kappa_{r-p,1}(C,L) = \sum_{i=k-1}^{p-1} c_i \frac{(p-i)(r-p)}{r-i+1} {r+1 \choose p}~~\text{ for $k \leq p \leq g+1$}.
$$
Conversely, we can express the Boij--S\"{o}derberg coefficients in terms of the graded Betti numbers $\kappa_{r-p,1}(C,L)$. Put
$$
\overline{\kappa}_{r-p,1}:= \frac{\kappa_{r-p,1}(C,L)}{(r-p){r+1 \choose p}}~~\text{ for $0 \leq p \leq r-1$}~~\text{ and }~~\overline{c}_i:=\frac{c_i}{r-i+1}~~\text{ for $k-1 \leq i \leq g$}.
$$

\begin{proposition}\label{prop:compBScoeff}
For each $k-1 \leq i \leq g$, we have
$$
c_i = (r-i+1)(\overline{\kappa}_{r-i-1,1}-2\overline{\kappa}_{r-i,1} + \overline{\kappa}_{r-i+1,1}).
$$
In particular, if $d=\deg L \gg 0$, then every $c_i$ is a rational function in $d$. 
\end{proposition}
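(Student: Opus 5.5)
The plan is to invert the triangular linear system (\ref{eq:BSforcurves}) by taking a second difference in $p$. Dividing (\ref{eq:BSforcurves}) by $(r-p)\binom{r+1}{p}$ gives
$$
\overline{\kappa}_{r-p,1} = \sum_{i=k-1}^{p-1} (p-i)\,\overline{c}_i .
$$
First I will fix the range of validity of this identity. It holds for $k \le p \le g+1$ by (\ref{eq:BSforcurves}). To reach the boundary indices I extend it to all $p$ between $k-2$ and $g+2$. For $p \le k-1$ both sides vanish: the right side is an empty sum, and the left side vanishes because $\kappa_{r-p,1}(C,L)=0$ for $p \le k-1$ by the gonality conjecture, which holds since $d$ is large. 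For $p \ge g+1$ the same identity holds for every $p$ with $g+1 \le p \le r-1$. This follows from the formula for $\kappa_{r-p,1}(\pi(\mathbf{d}^i))$ quoted in Subsection \ref{subsec:BS}, valid for $i+1 \le p \le r-1$, and the sum then runs over all $i$ from $k-1$ to $g$.

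Next I set $f(p):=\overline{\kappa}_{r-p,1}$ and $F(p):=\sum_{i\le p-1}(p-i)\overline{c}_i$, with the convention that $\overline{c}_i=0$ outside $[k-1,g]$. Then $F(p+1)-F(p)=\sum_{i\le p}\overline{c}_i$, and therefore
$$
F(p+1)-2F(p)+F(p-1)=\overline{c}_p .
$$
Taking $p=i$ and using $f=F$ at the three indices $i-1$, $i$, $i+1$ gives
$$
\overline{c}_i=\overline{\kappa}_{r-i-1,1}-2\overline{\kappa}_{r-i,1}+\overline{\kappa}_{r-i+1,1}.
$$
Multiplying by $r-i+1$ yields the claimed formula for $c_i$. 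For $k-1\le i\le g$, the indices $i\pm1$ lie in $[k-2,g+1]$, which is covered by the first step. We need $r-1\ge g+1$, which holds for $d\gg0$.

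For the rationality statement I use that $r=d-g$ for $d\gg0$. By \cite[Theorem C]{EL15} each $\kappa_{r-p,1}(C,L)$ is a polynomial in $d$ for $d\gg 0$ and fixed $p$ in the relevant range. Outside the overlap range, (\ref{eq:k_1-k_2}) gives it explicitly as a polynomial in $d$, or it is $0$. Also $(r-p)\binom{r+1}{p}$ is a nonzero polynomial in $d$. Hence each $\overline{\kappa}$ is rational in $d$, and so is each $c_i$.

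The main obstacle is the boundary bookkeeping: checking that (\ref{eq:BSforcurves}) holds at $p=k-1$ (both sides zero), at $p=k-2$, and at $p=g+2$. The vanishing for $p\le k-1$ is exactly the gonality conjecture; for $p=g+2$ it suffices that $r-1 \ge g+2$, which holds for $d \gg 0$.
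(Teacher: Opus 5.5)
Your proposal is correct and follows essentially the same route as the paper: divide (\ref{eq:BSforcurves}) by $(r-p){r+1 \choose p}$, take second differences of the triangular system, handle the boundary cases $i=k-1,k$ via the vanishing $\overline{\kappa}_{r-k+1,1}=\overline{\kappa}_{r-k+2,1}=0$ from the gonality conjecture, and get rationality from \cite[Theorem C]{EL15}. Extending the identity past $p=g+1$ (in particular checking $p=g+2$) is harmless but unnecessary, since the argument only uses the indices $k-2\le p\le g+1$.
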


\begin{proof}
For $k \leq p \leq g+1$, we have
$$
\overline{\kappa}_{r-p,1}= \sum_{i=k-1}^{p-1} (p-i) \overline{c}_i.
$$
Taking two successive differences yields
$$
\overline{\kappa}_{r-p-1,1}-\overline{\kappa}_{r-p,1} = \sum_{i=k-1}^{p} \overline{c}_i~~\text{ for $k-1 \leq p \leq g$}.
$$
Thus we obtain 
$$
\overline{c}_i = \overline{\kappa}_{r-i-1,1}-2\overline{\kappa}_{r-i,1} + \overline{\kappa}_{r-i+1,1}~~\text{ for every $k+1 \leq i \leq g$}.
$$
This also holds for $i=k-1, k$ since $\overline{\kappa}_{r-k+1,1}=\overline{\kappa}_{r-k+2,1}=0$. 
Finally, the last statement follows from \cite[Theorem C]{EL15} saying that $\kappa_{r-p,1}(C,L)$ for $k \leq p \leq g+1$ is a polynomial in $d$ when $d \gg 0$. 
\end{proof}

\begin{corollary}\label{cor:compBScoeff}
The following hold:
\begin{enumerate}
    \item Assume that $C$ is hyperelliptic. If $d \geq 2g+1$, then 
    $$
    c_i=\begin{cases}
    \displaystyle \frac{2(r-i+1)}{r(r+1)} & \text{for $1 \leq i \leq g-1$} \\[10pt]
    \displaystyle \frac{(r-g+1)(r-g+2)}{r(r+1)} & \text{for $i=g$}.
    \end{cases}
    $$
    \item Assume that $\dim C_p^1 = 2p-g-1$ with $k \leq p \leq g-1$ and $H^1(C, L \otimes \omega_C^{-1})=0$. Then 
    $$
    c_i=\frac{{g \choose i}{r-g+2 \choose 2i-g}}{{r+1 \choose i}}~~\text{ for $p+1 \leq i \leq g$}.
    $$
    If $C$ has maximal gonality $\lfloor (g+3)/2 \rfloor$ and $\dim W_k^1(C)=0$ when $g$ is even, then this holds for $k-1 \leq i \leq g$.
\end{enumerate}
\end{corollary}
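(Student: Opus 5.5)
By Proposition \ref{prop:compBScoeff}, everything reduces to computing the normalized numbers $\overline{\kappa}_{r-p,1}$ and taking second differences in $p$. Since $d\ge 2g+1$, $L$ is nonspecial and $r=d-g$. I would first assemble $\kappa_{r-p,1}(C,L)$ for the full range of $p$ that enters the second differences, namely $p\in\{i-1,i,i+1\}$ with $k-1\le i\le g$, so $k-2\le p\le g+1$. For $p\le k-1$, the gonality conjecture gives $\kappa_{r-p,1}=0$; here I rely on \cite{NP24+} for $d\gg0$, and for hyperelliptic curves with $d\ge 2g+1$ on Theorem \ref{thm:bettihyperelliptic}. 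For $p=g,g+1$, Green's $(2g+1+p)$-theorem gives $\kappa_{r-p-1,2}=0$, so (\ref{eq:k_1-k_2}) determines $\kappa_{r-p,1}$ explicitly.

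For (1), Theorem \ref{thm:bettihyperelliptic} gives $\kappa_{r-p,1}=(r-p)\binom{r-1}{p-2}$ for $2\le p\le g-1$. I would check that the same expression is correct for $p=g,g+1$ by comparing with (\ref{eq:k_1-k_2}): both sides are explicit binomial expressions in $r$ and $g$. It is also trivially correct (equal to $0$) for $p=0,1$. Then
$$
\overline{\kappa}_{r-p,1}=\binom{r-1}{p-2}\Big/\binom{r+1}{p}=\frac{p(p-1)}{r(r+1)},
$$
which is quadratic in $p$, so its second difference is the constant $2/(r(r+1))$. Proposition \ref{prop:compBScoeff} then yields $c_i=2(r-i+1)/(r(r+1))$ for $1\le i\le g-1$. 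For $c_g$, I would either compute the second difference at $i=g$, which involves $p=g+1$ through the formula verified above, or use $\sum_i c_i=1$ together with the arithmetic-series sum $\sum_{i=1}^{g-1}2(r-i+1)$. The latter gives
$$
c_g=\frac{r(r+1)-(g-1)(2r-g+2)}{r(r+1)}=\frac{(r-g+1)(r-g+2)}{r(r+1)},
$$
as claimed.

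For (2), the proof of Theorem \ref{thm:bettinumber} shows $\kappa_{r-q,1}=[(1+t)^{d-2g}Q_1(t)]_{t^q}$ for $p\le q\le g-1$. For $q=g,g+1$ the same expression holds: then $Q_2$ is an empty sum, and the identity $\kappa_{r-q,1}=[(1+t)^{d-2g}(Q_2+P)]_{t^q}$ reduces to (\ref{eq:k_1-k_2}) with vanishing weight-two term. Rather than take second differences of these convolutions directly, I would verify (\ref{eq:BSforcurves}) with the proposed $c_i$ and invoke uniqueness. The triangular system in Proposition \ref{prop:compBScoeff} determines $c_i$ from $\kappa_{r-i-1,1}$, $\kappa_{r-i,1}$ and $\kappa_{r-i+1,1}$, so the indices used for $i\ge p+1$ all lie in $[p,g+1]$. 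Concretely, one must show
$$
\sum_{i}\binom{g}{i}\binom{r-g+2}{2i-g}\frac{(q-i)}{(r-i+1)\binom{r+1}{i}}\;\;\text{has the correct second differences in }q,
$$
that is,
$$
\overline{\kappa}_{r-i-1,1}-2\overline{\kappa}_{r-i,1}+\overline{\kappa}_{r-i+1,1}=\frac{\binom{g}{i}\binom{r-g+2}{2i-g}}{(r-i+1)\binom{r+1}{i}} .
$$
I would prove this identity through the generating-function form $[(1+t)^{r-g}Q_1(t)]_{t^q}$, writing $Q_1$ via $P(t)=gt(1+t)^{g-1}-(g-p)(1+t)^g-t(1+t)^g$ and the truncation, and then clearing the factor $(r-q)\binom{r+1}{q}$. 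This binomial identity is the main obstacle. If a slick manipulation fails, I would fall back on a WZ/Zeilberger-style verification or a Vandermonde convolution in the variable $2i-g$.

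The last statement of (2) is then immediate. Under maximal gonality, with $\dim W^1_k(C)=0$ for $g$ even, we have $\dim C^1_p=2p-g-1$ already at $p=k$, which gives $c_i$ for $k+1\le i\le g$. The cases $i=k-1$ and $i=k$ follow because $\kappa_{r-k+1,1}=\kappa_{r-k+2,1}=0$, and the closed formula also vanishes there: for $i=k-2$ and $i=k-1$ we have $2i-g<0$ or the corresponding Betti number is zero. Hence the same second-difference identity extends down to $i=k-1$.
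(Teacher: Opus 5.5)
Your plan follows the paper's proof: compute $\overline{\kappa}_{r-p,1}$, apply Proposition~\ref{prop:compBScoeff}, and take second differences. As in the paper, the binomial identity in (2) is left as a computation. There is, however, one false step in (1), and one muddled step in (2).

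\textbf{The false step in (1).} The hyperelliptic expression $\kappa_{r-p,1}=(r-p)\binom{r-1}{p-2}$, equivalently $\overline{\kappa}_{r-p,1}=p(p-1)/(r(r+1))$, is correct at $p=g$ but \emph{not} at $p=g+1$. From (\ref{eq:k_1-k_2}) and Green's vanishing $\kappa_{r-g-2,2}=0$ one gets
\[
\overline{\kappa}_{r-g-1,1}=\frac{r+g^2}{r(r+1)}\neq\frac{g(g+1)}{r(r+1)}\quad\text{unless } r=g .
\]
For example, $g=3$, $r=6$ gives $\kappa_{2,1}=25$, not $20$. This is why the paper states the quadratic formula only for $0\le p\le g$ and computes $\overline{\kappa}_{r-g-1,1}$ separately. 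The check you propose for $p=g+1$ would therefore fail. Your first option for $c_g$ (second difference at $i=g$ ``through the formula verified above'') would give the wrong value $2(r-g+1)/(r(r+1))$.

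Your second option, $\sum_{i=1}^{g}c_i=1$, is valid and gives $(r-g+1)(r-g+2)/(r(r+1))$. Alternatively, inserting the correct $\overline{\kappa}_{r-g-1,1}$ into the second difference gives $(r-g+1)\bigl((r+g^2)-2g(g-1)+(g-1)(g-2)\bigr)/(r(r+1))$, which is the same value. The sum-to-one route has a small extra benefit: it also covers $d=2g+1$. There $\overline{\kappa}_{r-g-1,1}$ is undefined because the normalizing factor $r-g-1$ vanishes, which is why the paper sets that case aside. So commit to that route and drop the claim at $p=g+1$.

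\textbf{The muddled step in (2).} Extending $\kappa_{r-q,1}=[(1+t)^{r-g}Q_{1,q}(t)]_{t^q}$ to $q=g,g+1$ is correct, and it is what the paper uses. Your justification for $i=k-1,k$ is unclear, though. The closed formula for $c_{k-1}$ does not vanish, since $2(k-1)-g\in\{0,1\}$; indeed every $c_i$ is positive.

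What you actually need is that the generating-function expression for $\kappa_{r-q,1}$ vanishes at $q=k-2$ and $q=k-1$. This matches the true values $\kappa_{r-k+2,1}=\kappa_{r-k+1,1}=0$ from the gonality theorem.
\begin{itemize}
\item At $q=k-2$ the sum over $j$ is empty, because $2q<g$.
\item At $q=k-1$ it is not empty. For $g=2k-3$ it is the cancellation $\binom{g}{k-1}-\binom{g}{k-2}=0$. For $g=2k-2$ the single surviving term has coefficient $0$.
\end{itemize}
With this, and provided your second-difference identity is proved as a formal identity in $i$ (not only for $i\ge p+1$), the closed formula extends down to $i=k-1$.
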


\begin{proof}
By \cite[Theorem 1.1]{NP24+}, the gonality conjecture holds in the situation under consideration. In view of Proposition \ref{prop:compBScoeff}, it is sufficient to know the exact values of $\overline{\kappa}_{r-p,1}$ for $k-2 \leq p \leq g+1$. For convenience, we carry out the computation under the assumption that $d \geq 2g+2$ (i.e., $r = d-g \geq g+2$). The case $d=2g+1$ can be treated easily. 
Note that $\overline{\kappa}_{r-k+2,1}=\overline{\kappa}_{r-k+1,1}=0$.  By Green's $(2g+1+p)$-theorem and (\ref{eq:k_1-k_2}), we have
$$
\overline{\kappa}_{r-g-1,1} = \frac{r+g^2}{r(r+1)}
~\text{ and }~
\overline{\kappa}_{r-g,1} = \frac{g(g-1)}{r(r+1)}.
$$
First, suppose that $C$ is hyperelliptic so that $k=2$. By Theorem \ref{thm:bettihyperelliptic}, 
$$
\overline{\kappa}_{r-p,1} = \frac{p(p-1)}{r(r+1)}~~\text{ for $0 \leq p \leq g$.}
$$
Then we obtain the assertion $(1)$ from Proposition \ref{prop:compBScoeff}. Next, suppose that $\dim C_p^1 = 2p-g-1$ and $H^1(C, L \otimes \omega_C^{-1})=0$. In this case, $C_m^1$ has the expected dimension for $p \leq m \leq g+1$ by \cite[Theorem 1]{FHL84}.  Recall from  the proof of Theorem \ref{thm:bettinumber} that
$\kappa_{r-m,1}(C,L) = [(1+t)^{r-g}Q_{1,m}(t)]_{t^m}$, where
$$
Q_{1,m}(t):=\sum_{j=g-m}^g {g \choose j} ((j-g+m)t^j - t^{j+1}). 
$$
By Proposition \ref{prop:compBScoeff}, we find
$$
c_i = \frac{1}{{r + 1 \choose i}} \left[ (1+t)^{r-g} \left ( \frac{i+1}{r-i-1} Q_{1,i+1}(t)- \frac{2(r-i+1)}{r-i} t Q_{1,i}(t) + \frac{r-i+2}{i}t^2 Q_{1,i-1}(t)\right )\right]_{t^{i+1}} 
$$
for $p+1 \leq i \leq g$. We obtain the first assertion of $(2)$ by a straightforward calculation. The remaining assertion can be easily handled. 
\end{proof}

The corollary immediately implies Theorem \ref{thm:main3} $(1)$ and $(2)$. For the remaining assertion $(3)$, we assume that $d=\deg L \gg 0$ and $k \leq  p \leq g+1$ unless otherwise stated. Recall from \cite[Theorem C]{EL15} that $\kappa_{r-p,1}(C,L)$ is a polynomial in $d$. We may write
$$
\kappa_{r-p,1}(C,L) = \frac{b_{p}}{a_{p}!}d^{a_{p}} + O(d^{a_{p}-1})~\text{ and }~\overline{\kappa}_{r-p,1} = \frac{b_{p} p!}{a_{p}!} d^{a_{p}-p-1} + O(d^{a_{p}-p-2}),
$$
where
$$
a_{p}:=\dim C_{p}^1~~\text{ and }~~b_{p}:=[C_{p}^1] \cdot  x_{p}^{a_{p}}. 
$$
We set
$$
\ell_{p}:=\begin{cases}
 \frac{b_{p} p!}{a_{p}!} & \text{for $k \leq p \leq g+1$}\\
 0 & \text{otherwise}.
\end{cases}
$$
We recall from \cite[Theorem 1]{FHL84} that $a_{p+1}-a_p = 1$ or $2$. We put $\overline{a}_p:=a_p - p$. Then $\overline{a}_{p+1} - \overline{a}_p = 0$ or $1$. 
By Proposition \ref{prop:compBScoeff}, we can write
$$
c_i=(\ell_{i+1}d^{\overline{a}_{i+1}} -2 \ell_id^{\overline{a}_i} + \ell_{i-1}d^{\overline{a}_{i-1}}) + O(d^{\overline{a}_{i+1}-1})~~\text{ for $k-1 \leq i \leq g$}.
$$
Thus
$$
c_{i+1} - c_i = (\ell_{i+2}d^{\overline{a}_{i+2}}  -3 \ell_{i+1}d^{\overline{a}_{i+1}} + 3\ell_id^{\overline{a}_i} - \ell_{i-1}d^{\overline{a}_{i-1}}) + O(d^{\overline{a}_{i+2}-1})~~\text{ for $k-1 \leq i \leq g-1$}.
$$
If $\overline{a}_{i+2} > \overline{a}_{i+1}$ (i.e., $a_{i+2}-a_{i+1}=2$), then $c_{i+1}>c_i$ since $\ell_{i+2} > 0$ and $d \gg 0$. However, if $\overline{a}_{i+2} = \overline{a}_{i+1}$, then we need to understand the relation between $\ell_{i+2}$ and $\ell_{i+1}$. 

\medskip

Now, we denote by $\maxirr(C_{p}^1)$ the set of maximal irreducible components of $C_{p}^1$. Here a maximal irreducible component means a maximal-dimensional irreducible component, so  $\dim P = a_p$ for all $P \in \maxirr(C_{p}^1)$. We define
$$
b_{p}(P):=[P] \cdot x_{p}^{a_{p}}~~\text{ and }~~\ell_{p}(P):=\frac{b_{p}(P)p!}{a_{p}!}~~\text{ for $P \in \maxirr(C_{p}^1)$}.
$$
Although the scheme structure on $P$ may not be uniquely determined, the intersection number $[P] \cdot x_{p}^{a_{p}}$ is independent of this choice. Note that $b_p(P)=\operatorname{length}(\sO_{C_p^1, \eta_P}) \cdot [P_{\text{red}}] \cdot x_p^{a_p}$, where $\eta_P$ is the generic point of $P$ and $P_{\text{red}}$ is $P$ endowed with the reduced scheme structure. 
Then
$$
b_{p}=\sum_{P \in \maxirr(C_{p}^1)} b_{p}(P)~~\text{ and }~~\ell_{p}=\sum_{P \in \maxirr(C_{p}^1)} \ell_{p}(P).
$$
To understand how the $\ell_i$ are related, we first study the geometry of maximal irreducible components of $C_{p}^1$.

\begin{lemma}\label{lem:sepacompofC_m^1}
Let $Z_1, Z_2 \subseteq C_{p}^1$ be closed subsets. Then $Z_1 \subseteq Z_2$ if and only if $\sigma_{p,1}(Z_1 \times C) \subseteq \sigma_{p,1}(Z_2 \times C)$.
\end{lemma}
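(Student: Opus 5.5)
The forward implication is immediate: if $Z_1 \subseteq Z_2$, then $Z_1 \times C \subseteq Z_2 \times C$, and applying $\sigma_{p,1}$ gives the inclusion of images. The work is in the converse, and the plan is to recover $Z$ from $W:=\sigma_{p,1}(Z \times C)$ intrinsically, using the $h^0$ condition that defines $C_p^1$.

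So suppose $\sigma_{p,1}(Z_1 \times C) \subseteq \sigma_{p,1}(Z_2 \times C)$, and fix $\xi \in Z_1$. For every $x \in C$ we have $\xi + x \in \sigma_{p,1}(Z_2\times C)$. Hence there exist $\eta_x \in Z_2$ and $y_x \in C$ with $\xi + x = \eta_x + y_x$ as effective divisors of degree $p+1$.

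If $y_x = x$, then $\eta_x = \xi$ and we are done. So assume $y_x \neq x$ for every $x$; then $x \le \eta_x$, and $\eta_x = \xi + x - y_x$ with $y_x \le \xi$. Since $\xi$ has finitely many points in its support, while $x$ ranges over infinitely many (e.g.\ general) points, some point $y \in \Supp(\xi)$ equals $y_x$ for infinitely many $x$. For each such $x$ the divisor $\xi - y + x$ lies in $Z_2$. Since $Z_2$ is closed and these $x$ are infinitely many, hence dense in $C$, we get $\xi - y + x \in Z_2$ for all $x \in C$. Taking $x=y$ gives $\xi \in Z_2$.

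Note that the hypothesis $Z_i \subseteq C_p^1$ is not actually needed: the argument only uses that $Z_2$ is closed and the elementary pigeonhole over the finite support of $\xi$. The only point that requires care is passing from infinitely many $x$ to all $x$, which follows from closedness of $Z_2$ and irreducibility of $C$, since the map $x \mapsto \xi - y + x$ is a morphism $C \to C_p$. I expect no real obstacle, though one should track multiplicities when $y$ appears in $\xi$ with multiplicity, where $\xi - y$ still makes sense as an effective divisor.
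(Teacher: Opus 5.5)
Your proof is correct and is essentially the paper's argument: the paper also writes $D+x=E_x+y_x$ with $y_x\in\Supp(D)$ and covers $C$ by the finitely many closed sets $S_y=\psi_y^{-1}(Z_2)$, where $\psi_y(x)=D+x-y$. The only difference is that the paper argues by contradiction (each $S_y$ is proper since $y\notin S_y$), whereas you pigeonhole to get an infinite $S_y$, conclude $S_y=C$, and evaluate at $x=y$.
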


\begin{proof}
We only need to show that if $\sigma_{p,1}(Z_1 \times C) \subseteq \sigma_{p,1}(Z_2 \times C)$, then $Z_1 \subseteq Z_2$. To derive a contradiction, suppose that $Z_1 \not\subseteq Z_2$ while $\sigma_{p,1}(Z_1 \times C) \subseteq \sigma_{p,1}(Z_2 \times C)$. We can find $D \in Z_1$ with $D \not\in Z_2$. For every $x \in C$, there are $E_x \in Z_2$ and $y_x \in C$ such that $D+x = E_x + y_x$. Since $x \neq y_x$, it follows that $y_x \in \Supp(D)$. We have $D+x-y_x \in Z_2$. This means that 
$$
C = \bigcup_{y \in \Supp(D)} S_y,
$$
where 
$$
S_y:=\{ x \in C \mid D +x-y \in Z_2\} = \psi_y^{-1}(Z_2) \subseteq C
$$
and $\psi_y \colon C \to C_p$ is a morphism given by $x \mapsto D +x-y$. As $y \not\in S_y$, we see that $S_y \subseteq C$ is a proper closed subset, so we get a contradiction. 
\end{proof}

For an effective divisor $D$ on $C$, we may uniquely write
$$
\lvert D \rvert = \lvert M_D \rvert + F_D,
$$
where $M_D$ is the moving part of the complete linear system $\lvert D \rvert$ and $F_D$ is the fixed part. In other words, $F_D$ is the maximal effective divisor on $C$ such that $h^0(C, D-F_D) = h^0(C, D)$. 

\begin{lemma}\label{lem:constdegF_D}
For a maximal irreducible component $P \subseteq C_{p+1}^1$ with $p \leq g$, there exists a nonempty open subset $U \subseteq P$ such that $\deg F_D$ is constant for all $D \in U$. 
\end{lemma}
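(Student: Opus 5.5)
The plan is a standard semicontinuity and constructibility argument on the irreducible variety $P$ (with its reduced structure). For each integer $f \geq 0$, consider
$$
P_f := \{ D \in P \mid \deg F_D \geq f \}.
$$
Since $\deg F_D$ takes only finitely many values $0 \leq \deg F_D \leq p+1$, it suffices to show that each $P_f$ is closed, or at least constructible, in $P$. Granting this, let $f_0$ be the largest $f$ with $P_f = P$. Then
$$
U := P \setminus P_{f_0+1}
$$
is a nonempty open subset on which $\deg F_D = f_0$ is constant. If we only know constructibility, we instead take $f_0$ to be the value for which the constructible set $\{\deg F_D = f_0\}$ is dense in $P$, and then take $U$ to be a nonempty open subset of $P$ contained in it.

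To describe $P_f$, note that $\deg F_D \geq f$ holds if and only if there is an effective divisor $E \leq D$ with $\deg E = f$ and $h^0(C, \sO_C(D-E)) = h^0(C, \sO_C(D))$. Set $s := r(D) + 1$. First stratify $P$ by the value of $h^0(C, \sO_C(D))$; this is upper semicontinuous via $u_{p+1}$, so we may shrink to the open stratum $P^\circ$ where it equals its generic value $s$. On $P^\circ$, consider the incidence variety
$$
I_f := \{ (E, D') \in C_f \times C_{p+1-f} \mid \sigma_{f,p+1-f}(E,D') \in P^\circ,\ h^0(C, \sO_C(D')) \geq s \}.
$$
This is closed in $\sigma_{f,p+1-f}^{-1}(P^\circ)$, because the condition $h^0 \geq s$ is the closed condition $D' \in C_{p+1-f}^{s-1}$. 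Since $\sigma_{f,p+1-f}$ is finite, hence proper, the image $\sigma_{f,p+1-f}(I_f) = P_f \cap P^\circ$ is closed in $P^\circ$. Therefore $P_f \cap P^\circ$ is closed, and the argument of the first paragraph applied to the irreducible open set $P^\circ$ produces the desired $U$.

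The only step needing care is making sure that on $P^\circ$ the condition "$h^0(D-E) = h^0(D)$" is equivalent to "$h^0(D-E) \geq s$". This holds because $h^0(D-E) \leq h^0(D) = s$ always, so the inequality forces equality. It is also why we first restrict to the locus where $h^0(D)$ is constant. The cases $f=0$, where $P_0 = P$, and $f = p+1$, which is impossible since $r(D) \geq 1$, are trivial. I expect no real obstacle. The hypothesis $p \leq g$ and the maximality of $P$ are not needed for this constancy statement; they are presumably used in the subsequent lemmas.
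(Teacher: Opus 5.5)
Your argument is correct and is essentially the paper's proof. The paper restricts to the open set of $D \in P$ with $h^0(C,D)=2$, which is nonempty by \cite[Lemma IV.1.7]{ACGH85}; this is the only place $p \leq g$ enters. It then shows that $\{D \mid \deg F_D \geq n\}$ equals that open set intersected with $\sigma_{p+1-n,n}(C_{p+1-n}^1 \times C_n)$, hence is closed by properness of the addition map. Your only change is to restrict instead to the open stratum where $h^0(C,D)$ takes its generic value $s$, which indeed makes $p \leq g$ unnecessary for this particular statement.
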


\begin{proof}
Notice that 
$$
U_0:=\{ D \in P \mid h^0(C, D)=2\} \subseteq P
$$
is a nonempty open subset by \cite[Lemma IV.1.7]{ACGH85}. 
It is enough to show that
$$
F_n:=\{ D \in U_0 \mid \deg F_D \geq n\} \subseteq U_0
$$
is closed for every integer $n \geq 0$. Note that $F_n = \emptyset$ for $n \geq p+1$ and $F_0 = U_0$. For $1 \leq n \leq p$, we claim that 
$$
F_n = U_0 \cap \sigma_{p+1-n, n}(C_{p+1-n}^1 \times C_n). 
$$
As the addition map $\sigma_{p+1-n, n} \colon C_{p+1-n} \times C_n \to C_{p+1}$ is proper, the claim implies that $F_n \subseteq U_0$ is closed. For $D \in F_n$, we can choose an effective divisor $E \leq F_D$ such that $\deg E = n$. Then $h^0(C, D-E) = h^0(C, D)=2$, so $D = \sigma_{p+1-n,n}(D-E, E)$. Conversely, for $D=\sigma_{p+1-n,n}(D-E, E) \in U_0$ with $D-E \in C_{p+1-n}^1$ and $E \in C_n$, we have $2=h^0(C, D-E) \leq h^0(C, D)=2$, so $h^0(C, D-E)=h^0(C, D)$. This means that $E \leq F_D$ so that $\deg F_D \geq n$. Thus $D \in F_n$. 
\end{proof}

For an irreducible component $Y \subseteq C_m^1$ with $k \leq m \leq p$, let
$$
Y[p]:=\sigma_{m, p-m}(Y \times C_{p-m}) \subseteq C_{p}^1
$$
denote the set-theoretic image. Note that $Y[m]=Y$ and $Y[p]=\sigma_{p-1,1}(Y[p-1] \times C)$ for $p \geq m+1$. Since $\dim Y = \dim Y[p] - (p-m)$ and $\dim C_m^1 \leq \dim C_{p}^1 - (p-m)$, it follows that if $Y[p]$ is a maximal irreducible component of $C_{p}^1$, then $Y$ is also a maximal irreducible component of $C_m^1$ (more generally, $Y[m']$ is a maximal irreducible component of $C_{m'}^1$ for every $m \leq m' \leq p$). We say that a maximal irreducible component $P \subseteq C_{p}^1$ is \emph{primitive} if there is no maximal irreducible component $P' \subseteq C_{p-1}^1$ such that $P=P'[p]$. Notice that $P \subseteq C_{p}^1$ is not a primitive maximal irreducible component if and only if $P \subseteq \sigma_{p-1,1}(C_{p-1}^1 \times C)$. For any maximal irreducible component $P \subseteq C_{p}^1$, there is a primitive component $P' \subseteq C_m^1$ for some $k \leq m \leq p$ such that $P=P'[p]$. We say that $P'$ is a \emph{primitive ancestor} of $P$. By Lemma \ref{lem:sepacompofC_m^1}, a primitive ancestor is uniquely determined.

\begin{proposition}\label{prop:primcomp}
Let $P \subseteq C_{p}^1$ be a maximal irreducible component, and $P' \subseteq C_m^1$ be its primitive ancestor with $k \leq m \leq p$. 
\begin{enumerate}
    \item If $P$ is primitive and $D \in P$ is general, then $D$ is base point free.
    \item For each $m \leq m' \leq p$, we endow $P'[m']$ with reduced scheme structure. Then the restricted addition morphism $\sigma_{m', p-m'} \colon P'[m'] \times C_{p-m'} \to P$ has degree ${p-m \choose p-m'}$. In particular, $\sigma_{m, p-m} \colon P' \times C_{p-m} \to P$ is birational.
\end{enumerate}
\end{proposition}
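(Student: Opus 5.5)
For (1), I would argue by contradiction. Suppose a general $D \in P$ has a base point. Then by Lemma \ref{lem:constdegF_D} there is a nonempty open $U \subseteq P$ on which $n := \deg F_D \geq 1$ is constant. For $D \in U$, choose a point $x \leq F_D$. Then $h^0(C, D - x) = h^0(C, D) = 2$, so $D - x \in C_{p-1}^1$. Hence $U \subseteq \sigma_{p-1,1}(C_{p-1}^1 \times C)$, and by closedness $P \subseteq \sigma_{p-1,1}(C_{p-1}^1 \times C)$. By the characterization recorded just before the proposition, this means $P$ is not primitive, a contradiction. (Lemma \ref{lem:constdegF_D} is stated for $C_{p+1}^1$ with $p \leq g$; this covers $C_p^1$ in our range after shifting the index.) In fact only the inclusion of an open set is needed, so even the constancy of $\deg F_D$ is not essential: the locus of $D$ with a base point is closed in $U_0$ by the claim in that lemma's proof, and if it equals $U_0$ we conclude as above.

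For (2), I would first treat $m' = m$, i.e.\ show that $\sigma_{m,p-m} \colon P' \times C_{p-m} \to P$ is birational. Take a general $D \in P$; it has the form $D = E + G$ with $E \in P'$ general and $G \in C_{p-m}$ general. By (1) applied to the primitive component $P'$, $E$ is base point free with $h^0(E) = 2$. Since $G$ is general (in particular disjoint from everything relevant), the moving part of $|D|$ is $|E|$ and $F_D = G$. Indeed $h^0(D) = 2$ on an open set, $h^0(E) = 2$ gives $G \leq F_D$, and $F_D - G$ would be base points of $|E|$. So any decomposition $D = E_1 + G_1$ with $E_1 \in P'$ and $h^0(E_1) = 2$ forces $G_1 \leq F_D = G$, hence $G_1 = G$ and $E_1 = E$. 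One subtlety is preimages $(E_1, G_1)$ with $E_1 \in P'$ special (e.g.\ $h^0(E_1) \geq 3$ or $E_1$ with base points). These form a proper closed subset of $P'$, so its image under the finite map $\sigma$ has dimension $< \dim P$ and misses a general $D$. Thus the general fiber is a single reduced point; generic reducedness holds because we are in characteristic zero and the map is generically injective between varieties with reduced structure. Hence the map has degree $1$.

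For general $m'$, I would factor $\sigma_{m,p-m} = \sigma_{m',p-m'} \circ (\sigma_{m,m'-m} \times \mathrm{id})$ through
$$
P' \times C_{m'-m} \times C_{p-m'} \longrightarrow P' \times C_{p-m}
$$
on the $C$-factors, together with $P' \times C_{m'-m} \to P'[m']$. Concretely, consider
$$
P' \times C_{m'-m} \times C_{p-m'} \to P'[m'] \times C_{p-m'} \to P .
$$
The composite equals $\sigma_{m,p-m} \circ (\mathrm{id} \times \sigma_{m'-m,p-m'})$. Since $\sigma_{m'-m,p-m'} \colon C_{m'-m} \times C_{p-m'} \to C_{p-m}$ has degree $\binom{p-m}{p-m'}$ and $\sigma_{m,p-m}$ is birational, the composite has degree $\binom{p-m}{p-m'}$. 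The first arrow is birational by the case already proved, applied to $P'[m']$, whose primitive ancestor is $P'$. Multiplicativity of degrees then gives the claim.

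The main obstacle is the genericity bookkeeping in the birationality step: showing that the fiber over a general $D$ contains no extra points coming from nongeneric $E_1 \in P'$. I would handle this by the dimension count via finiteness of $\sigma$, as above.
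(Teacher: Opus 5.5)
Your proof is correct. Part (1) and the birational case $m'=m$ of part (2) follow the paper's argument. On an open set where $h^0(C,D)=2$, a decomposition $D=E+G$ with $E\in P'$ base point free forces $F_D=G$. Then, for any other decomposition $D=E_1+G_1$, the equality $h^0(C,E_1)=h^0(C,D)$ forces $G_1\le F_D$. Your concern about preimages with nongeneric $E_1$ is unnecessary. Since $E_1\le D$, you automatically get $2\le h^0(C,E_1)\le h^0(C,D)=2$, and base point freeness of $E_1$ is never used. The dimension count under the finite map is still what justifies choosing one preimage $(E,G)$ with $E$ base point free, exactly as in the paper.

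Where you genuinely differ is the case of general $m'$.

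\textbf{The paper's route.} It describes the general fiber of $\sigma_{m',p-m'}$ directly: any $D=M'+F'$ with $M'\in P'[m']$ satisfies $M_D\le M'$ and $F'\le F_D$. So the fiber consists of the pairs $(D-F',F')$ with $F'\le F_D$ of degree $p-m'$, and these are counted as ${p-m \choose p-m'}$. This count tacitly uses that $F_D=F$ is reduced for general $D$.

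\textbf{Your route.} You obtain the degree formally from the identity $\sigma_{m',p-m'}\circ(\sigma_{m,m'-m}\times\mathrm{id})=\sigma_{m,p-m}\circ(\mathrm{id}\times\sigma_{m'-m,p-m'})$, together with multiplicativity of degrees and the known degree of the addition map $C_{m'-m}\times C_{p-m'}\to C_{p-m}$. You then apply the birational case to $P'[m']$.

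\textbf{Comparison.} The paper handles all $m'$ with a single fiber computation and yields an explicit description of the fiber. Your argument avoids the reducedness bookkeeping for $F_D$. The price is invoking the birational statement for each intermediate $P'[m']$. That is legitimate, because $P'[m']$ is a maximal irreducible component of $C_{m'}^1$ whose primitive ancestor is $P'$.
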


\begin{proof}
By \cite[Lemma IV.1.7]{ACGH85} and Lemma \ref{lem:constdegF_D}, there is a nonempty open subset $U \subseteq P$ such that $h^0(C, D)=h^0(C, M_D)=2$ and $\deg F_D$ is constant for all $D \in U$.

\medskip

\noindent $(1)$ For any $D \in U$, it suffices to show that $\deg F_D=0$. To derive a contradiction, suppose that $\deg F_D \geq 1$. Then $D=\sigma_{p-1,1}(D-x_D, x_D)$ for any $x_D \in \Supp(F_D)$. But $h^0(C, D-x_D) = 2$, so $P \subseteq \sigma_{p-1,1}(C_{p-1}^1 \times C)$. Hence $P$ is not primitive. 

\medskip

\noindent $(2)$ For any $D \in U$, there are $M \in P'$ and $F \in C_{p-m}$ such that $D=\sigma_{m,p-m}(M, F)$, i.e., $D=M+F$. Note that $h^0(C, D)=h^0(C, M)=2$. Now, we further assume that $D \in U$ is general. Then we may assume that $M$ is base point free by $(1)$. Thus $M=M_D$, and hence, $F=F_D$. Notice that $\deg M_D = m$ and $\deg F_D = p-m$. This means that if $D=\sigma_{m',p-m'}(M', F')$ for $M' \in P'[m'], F' \in C_{p-m'}$, then $M_D \leq M'$ and $F' \leq F_D$. Thus $\sigma_{m', p-m'} \colon P'[m'] \times C_{p-m'} \to P$ has degree ${p-m \choose p-m'}$.
\end{proof}

\begin{lemma}\label{lem:ell(P)}
Let $P \subseteq C_{p}^1$ be a maximal irreducible component, and $P' \subseteq C_m^1$ be its primitive ancestor with $k \leq m \leq p$. We assume that $P'[m']$ is generically reduced for every $m \leq m' \leq p$. Then the following hold:
\begin{enumerate}
    \item We have 
    $$
    \ell_{p}(P)={p \choose m}\ell_{m}(P').
    $$
    \item Assume that $m \geq 3$. Let $\ell_{m'}'(P'):=\ell_{m'}(P'[m'])$ for $m \leq m' \leq p$ and $\ell_{m'}'(P'):=0$ for $m' \leq m-1$. Then
    $$
    \ell_{p}(P) - 3\ell_{p-1}'(P') + 3\ell_{p-2}'(P') - \ell_{p-3}'(P') = {p-3 \choose m-3} \ell_m(P').
    $$
\end{enumerate}
\end{lemma}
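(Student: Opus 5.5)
The plan is to compute the intersection number $b_p(P)=[P]\cdot x_p^{a_p}$ by pulling back along the addition map from the primitive ancestor. Part (2) is then a purely combinatorial consequence of part (1).

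\emph{Part (1).} I will use the following facts.
\begin{itemize}
\item By the discussion preceding Proposition \ref{prop:primcomp}, each $P'[m']$ with $m\le m'\le p$ is a maximal irreducible component of $C_{m'}^1$.
\item Since $\dim P'[m']=\dim P'+(m'-m)$, we have $a_p=a_m+(p-m)$.
\item $P$ is generically reduced, so $b_p(P)=[P_{\mathrm{red}}]\cdot x_p^{a_p}$. The same holds for $P'$.
\end{itemize}
By Proposition \ref{prop:primcomp}(2), the map $\sigma:=\sigma_{m,p-m}\colon P'\times C_{p-m}\to P$ is birational. Hence $\sigma_*[P'\times C_{p-m}]=[P]$, and the projection formula gives
$$
b_p(P)=\int_{P'\times C_{p-m}}(\sigma^*x_p)^{a_p}.
$$
Next I will check that $\sigma_{m,p-m}^*x_p=x_m\otimes 1+1\otimes x_{p-m}$. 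This follows from $\sO_{C_p}(X_{p,z})=S_{p,\sO_C(z)}$ together with $\sigma_{m,p-m}^*S_{p,B}=S_{m,B}\boxtimes S_{p-m,B}$.

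Expanding $(x_m\otimes1+1\otimes x_{p-m})^{a_p}$ on $P'\times C_{p-m}$, only one term has the right bidegree $(a_m,\,p-m)$. That term is ${a_p\choose p-m}\,x_m^{a_m}\otimes x_{p-m}^{p-m}$. By (\ref{eq:xtheta}) with $j=0$ we have $x_{p-m}^{p-m}=1$, so
$$
b_p(P)={a_p\choose p-m}\,b_m(P').
$$
Substituting into $\ell_p(P)=b_p(P)\,p!/a_p!$ gives
$$
\ell_p(P)=\frac{b_m(P')\,p!}{(p-m)!\,a_m!}={p\choose m}\,\ell_m(P').
$$

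\emph{Part (2).} Applying (1) to each $P'[m']$, whose primitive ancestor is again $P'$, gives $\ell'_{j}(P')={j\choose m}\ell_m(P')$ for $j\ge m$. For $0\le j<m$ the binomial ${j\choose m}$ vanishes, which matches the convention $\ell'_j(P')=0$. Since $m\ge3$ and $p\ge m$, the indices $p-3,\dots,p$ are all nonnegative, so the formula $\ell'_j(P')={j\choose m}\ell_m(P')$ holds for every $j\in\{p-3,\dots,p\}$.

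Iterating Pascal's rule ${j\choose m}-{j-1\choose m}={j-1\choose m-1}$ three times gives
$$
\sum_{s=0}^3(-1)^s{3\choose s}{p-s\choose m}={p-3\choose m-3}.
$$
This yields the claimed identity.

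\emph{Main obstacle.} The only delicate step is the identification $\sigma_*[P'\times C_{p-m}]=[P]$ as cycles. This needs both the birationality from Proposition \ref{prop:primcomp}(2) and the generic reducedness hypothesis, which ensures that the scheme-theoretic multiplicity of $P$ in $C_p^1$ is one. I would also state explicitly that $x_m^{a_m}$ on $P'$ means $b_m(P')$, which uses generic reducedness of $P'$ itself, i.e.\ the case $m'=m$ of the hypothesis.
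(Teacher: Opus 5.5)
Your proposal is correct and follows essentially the same route as the paper. You use the birationality of $\sigma_{m,p-m}\colon P'\times C_{p-m}\to P$ from Proposition \ref{prop:primcomp}(2) and $\sigma_{m,p-m}^*x_p=\pr_1^*x_m+\pr_2^*x_{p-m}$ to get $b_p(P)={a_p \choose a_m}b_m(P')$, then deduce (2) from (1) by iterated Pascal identities, while making explicit two points the paper leaves implicit: that generic reducedness gives $[P]$ multiplicity one, and that the convention $\ell'_j(P')=0$ for $j<m$ agrees with ${j \choose m}=0$.
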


\begin{proof}
$(1)$ By Proposition \ref{prop:primcomp} $(2)$, the restricted addition morphism $\sigma_{m, p-m} \colon P' \times C_{p-m} \to P$ is birational. Note that $a_{p} = a_m + (p-m)$. Note that
$$
b_{p}(P)=[P] \cdot x_{p}^{a_{p}} = [P' \times C_{p-m}] \cdot (\pr_1^* x_m + \pr_2^* x_{p-m})^{a_{p}} ={a_{p} \choose a_m} [P'] \cdot x_m^{a_m} = {a _{p}\choose a_m} b_m(P'),
$$
where $\pr_1 \colon C_m \times C_{p-m} \to C_m$ and $\pr_2 \colon C_m \times C_{p-m} \to C_{p-m}$ are the projections. Then
$$
\ell_{p}(P)={a_{p} \choose a_m} \frac{b_m(P') p!}{a_{p}!} = 
{p \choose m} \frac{b_m(P') m!}{a_m!} = {p \choose m} \ell_m(P').
$$

\medskip

\noindent $(2)$ By $(1)$, we have
$$
\ell_{p}(P) - 3\ell_{p-1}'(P') + 3\ell_{p-2}'(P') - \ell_{p-3}'(P') 
= \left( {p \choose m} - 3 {p-1 \choose m} + 3{p-2 \choose m} - {p-3 \choose m} \right) \ell_m(P').
$$
Successively applying Pascal identity, we obtain the result. 
\end{proof}

\begin{remark}
In the situation of Lemma \ref{lem:ell(P)} without the generic reduced assumption, one can show that $C_p^1$ is generically reduced along $P$ if and only if $C_m^1$ is generically reduced along $P'$. Consequently, $C_{m'}^1$ is generically reduced along $P'[m']$ for every $m \leq m' \leq p$. For a general $D \in P$ with $D=\sigma_{m, p-m}(M_D, F_D)$, we may assume that $F_D$ is general by Proposition \ref{prop:primcomp} $(2)$. By \cite[Lemma IV.1.5]{ACGH85}, 
$$
\dim T_D P = 2p-1-g+k_D~~\text{ and }~~\dim T_{M_D} P' = 2m-1-g+k_{M_D}, 
$$
where 
\begin{align*}
& k_{M_D}:= \dim \ker( H^0(C, M_D) \otimes H^0(C, K_C-M_D) \longrightarrow H^0(C, K_C)) = h^0(C, K_C-2M_D )\\
& k_D:=\dim \ker( H^0(C, D) \otimes H^0(C, K_C-D) \longrightarrow H^0(C, K_C)) = h^0(C, K_C-2M_D - F_D).
\end{align*}
Since $k_D=k_{M_D} - (p-m)$, it follows that $\dim T_D P -  (p-m) = \dim T_{M_D} P'$.
On the other hand, recall that $\dim P = \dim P' + (p-m)$.
Hence $\dim P = \dim T_D P$ if and only if $\dim P' = \dim T_{M_D} P'$. 
\end{remark}

Now, we complete the proof of Theorem \ref{thm:main3}.

\begin{proof}[Proof of Theorem \ref{thm:main3}]
As noted before, the assertions $(1)$ and $(2)$ immediately follow from Corollary \ref{cor:compBScoeff}. For the assertion $(3)$, assume that $C$ is not hyperelliptic and every maximal component of $W_{p}^1(C)$ is generically reduced for each $k \leq p \leq g+1$. As the restricted Abel--Jacobi morphism $u_{p} \colon C_{p}^1 \to W_{p}^1(C)$ is a generically $\nP^1$-fibration, $C_{p}^1$ is generically reduced along every maximal irreducible component. Fix an integer $i$ with $k-1 \leq i \leq g-2$. For a maximal irreducible component $P \subseteq C_{i+2}^1$, denote by $P' \subseteq C_m^1$ its primitive ancestor. Then 
$$
c_{i+1} - c_i = \sum_{P \in \maxirr(C_{i+2}^1)} (\ell_{i+2}(P) - 3\ell_{i+1}'(P')+3\ell_i'(P')-\ell_{i-1}'(P')) d^{\overline{a}_{i+2}} + O(d^{\overline{a}_{i+2}-1}),
$$
so $c_{i+1}-c_i>0$ by Lemma \ref{lem:ell(P)} since $m \geq 3$ and $\ell_m(P') > 0$. 
\end{proof}

\begin{remark}\label{rem:HBN}
Assume that $C$ is a general curve of genus $g$ and gonality $k$ with $3 \leq k \leq \lfloor (g+1)/2 \rfloor$. By \cite[Theorem 1.2]{Larson21} and \cite[Theorem 1.2]{LLV25}, we have
$$
W_m^1(C)=\begin{cases} 
\Sigma_{1,1}(m) & \text{for $k \leq m < (g+2)/2$} \\
\Sigma_{1,1}(m) \cup \Sigma_{1,0}(m) & \text{for $(g+2)/2 \leq m \leq g-k+2$}\\
\Sigma_{1,0}(m) & \text{for $g-k+2<m\leq g+1$},
\end{cases}
$$
where $\Sigma_{1,1}(m)$ and $\Sigma_{1,0}(m)$ are irreducible and generically reduced unless they consists of reduced points. Note that 
$$
\dim \Sigma_{1,1}(m)=m-k~~\text{ and }~~\dim \Sigma_{1,0}(m)=2m-g-2.
$$
Thus $W_m^1(C)$ has exactly one maximal irreducible component except when $m=g-k+2$.
\end{remark}

\begin{remark}
Assume that $C$ is not hyperelliptic.
A maximal irreducible component of $C_{p}^1$ may be generically non-reduced in general (see e.g., \cite[Theorem 5.9]{Coppens88}). Lemma \ref{lem:ell(P)} remains valid without the assumption that $P$ and $P'$ are generically reduced provided that $\operatorname{length} (\sO_{C_{p}^1, \eta_P}) = \operatorname{length} (\sO_{C_m^1, \eta_{P'}})$, where $\eta_P$ and $\eta_{P'}$ are generic points of $P$ and $P'$, respectively (this equality implies that $\operatorname{length} (\sO_{C_{m}^1, \eta_{P'}}) = \operatorname{length} (\sO_{C_{m'}^1, \eta_{P'[m']}})$ for all $m \leq m' \leq p$). If this equality holds for all non-reduced maximal irreducible components of all $C_{p}^1$, then Theorem \ref{thm:main3} $(3)$ can be shown similarly. However, we do not know whether this equality always holds. 
\end{remark}

\bibliographystyle{alpha}

\begin{thebibliography}{[39]}
	
	\bibitem{Agostini24}
	Daniele Agostini, {\em The Martens--Mumford theorem and the Green--Lazarsfeld secant conjecture}, J. Algebraic Geom. \textbf{33} (2024), 629--654.

    \bibitem{AN10}
	Marian Aprodu and Jan Nagel, \textit{Koszul cohomology and algebraic geometry}, University Lecture Series, \textbf{52} (2010), Amer. Math. Soc., Providence, RI.

    \bibitem{AC81}
    Enrico Arbarello and Maurizio Cornalba, \textit{Footnotes to a paper of Beniamino Segre}, Math. Ann. \textbf{256} (1981), 341--362.

    \bibitem{ACGH85}
    Enrico Arbarello, Maurizio Cornalba, Joe Harris, and Philip A. Griffith, \textit{Geometry of algebraic curves, vol. I}, Grundlehren Math. Wiss. \textbf{267} (1985), Springer-Verlag, Berlin.

    \bibitem{BS95} 
    Mauro C. Beltrametti and Andrew J. Sommese, \textit{The adjunction theory of complex projective varieties}, de Gruyter Expositions in Mathematics \textbf{16} (1995), Walter de Gruyter and Co., Berlin.

    \bibitem{CCDL19}
    Wouter Castryck, Filip Cools, Jeroen Demeyer, and Alexander Lemmens, \textit{Computing graded Betti tables of toric surfaces}, Trans. Amer. Math. Soc. \textbf{372} (2019), 6869--6903.

    \bibitem{Ciliberto83}
    Ciro Ciliberto, \textit{Sul grado dei generatori dell'anello canonico di una superficie di tipo generale}, Rend. Sem. Math. Univ. Politec. Torino \textbf{41} (1983), 83--111.

    \bibitem{CLPS25+}
    Doyoung Choi, Justin Lacini, Jinhyung Park, and John Sheridan, \textit{Singularities and syzygies of secant varieties of smooth projective varieties}, preprint (2025), 	arXiv:2502.19703.

    \bibitem{Coppens88}
    Marc Coppens, \textit{A study of the schemes $W_e^1$ of smooth plane curves}, Proceedings of the First Belgian--Spanish Week on Algebra and Geometry, R.U.C.A. (1988), 29--62.

    \bibitem{EL12} 
	Lawrence Ein and Robert Lazarsfeld, \textit{Asymptotic syzygies of algebraic varieties},  Invent. Math. \textbf{190} (2012), 603--646.
    
	\bibitem{EL15}
	Lawrence Ein and Robert Lazarsfeld, {\em The gonality conjecture on syzygies of algebraic curves of large degree}, Publ. Math. Inst. Hautes \'Etudes Sci. \textbf{122} (2015), 301--313.

    \bibitem{EL26}
	Lawrence Ein and Robert Lazarsfeld, {\em Lecture on the syzygies and geometry of algebraic varieties},  University Lecture Series, \textbf{81} (2026), Amer. Math. Soc., Providence, RI.
    
	\bibitem{ENP20}
	Lawrence Ein, Wenbo Niu, and Jinhyung Park, {\em Singularities and syzygies of secant varieties of nonsingular
	projective curves}, Invent. Math. \textbf{222} (2020), 615--665.

    \bibitem{Eisenbud95}
    David Eisenbud, \emph{Commutative Algebra, with a View Toward Algebraic Geometry}, Graduate Texts in Math. \textbf{150} (1995), Springer-Verlag, Berlin.

    \bibitem{EGHP05}
    David Eisenbud, Mark Green, Klaus Hulek, and Sorin Popescu, \textit{Restricting linear syzygies: algebra and geometry}, Compositio Math. \textbf{141} (2005), 1460--1478.

    \bibitem{EH87}
    David Eisenbud and Joe Harris, \textit{The Kodaira dimension of the moduli space of curves of genus $\geq 23$}, Invent. Math. \textbf{90} (1987), 359--387.

    \bibitem{ES09}
    David Eisenbud and Frank-Olaf Schreyer, \textit{Betti numbers of graded modules and cohomology of vector bundles}, J. Amer. Math. Soc.  \textbf{22(3)} (2009), 859--888.

    \bibitem{Erman15}
	Daniel Erman, \textit{The Betti table of a high-degree curve is asymptotically pure}, Recent advances in algebraic geometry, London Math. Soc. Lecture Note Ser. \textbf{417} (2015), Cambridge Univ. Press, Cambridge, 200--206.

    \bibitem{FK16}
    Gavril Farkas and Michael Kemeny, {\em The generic Green-Lazarsfeld secant conjecture}, Invent. Math. \textbf{203} (2016), 265--301. 
    
	\bibitem{FK19}
	Gavril Farkas and Michael Kemeny, {\em Linear syzygies of curves with prescribed gonality}, Adv. Math. \textbf{356} (2019), article no. 106810.

    \bibitem{FHL84}
    William Fulton, Joe Harris, and Robert Lazarsfeld, \textit{Excess linear series on an algebraic curve}, Proc. Amer. Math. Soc. \textbf{92} (1984), 320--322.

	\bibitem{Green84}
	Mark Green, {\em Koszul cohomology and the geometry of projective varieties}, J. Differential Geom. \textbf{19} (1984), 125--171.
		
	\bibitem{GL86}
	Mark Green and Robert Lazarsfeld, {\em On the projective normality of complete linear series on an algebraic
	curve},  Invent. Math. \textbf{83} (1986), 73--90.
	
	\bibitem{GL88}
	Mark Green and Robert Lazarsfeld, {\em Some results on the syzygies of finite sets and algebraic curves}, Compositio Math. \textbf{67} (1988), 301--314.
    
    \bibitem{Kemeny20}
    Michael Kemeny, {\em Projecting syzygies of curves}, Algebraic Geom. \textbf{7} (2020), 561--580.

    \bibitem{LLV25}
    Eric Larson, Hannah Larson, and Isabel Vogt, \textit{Global Brill--Noether theory over the Hurwitz space}, Geom. Topol. \textbf{29} (2025), 193--257.

    \bibitem{Larson21}
    Hannah Larson, \textit{A refined Brill--Noether theory over Hurwitz spaces}, Invent. Math. \textbf{224} (2021), 767--790.

    \bibitem{Lazarsfeld04}
	Robert Lazarsfeld, \textit{Positivity in algebraic geometry I. Classical Setting: line bundles and linear series}, A Series of Modern Surveys in Math. \textbf{48} (2004), Springer-Verlag, Berlin.

    \bibitem{Lemmens18}
    Alexander Lemmens, \textit{On the $n$-th row of the graded Betti table of an $n$-dimensional toric variety}, J. Algebraic Comb. \textbf{47} (2018), 561--584.

    \bibitem{NP94}
    Uwe Nagel and Yves Pittleloud, {\em On graded Betti numbers and geometrical properties of projective varieties}, Manuscripta Math. \textbf{84} (1994), 291--314. 

    \bibitem{NP24+}
    Wenbo Niu and Jinhyung Park, \textit{Effective gonality theorem on weight-one syzygies of algebraic curves}, to appear in Duke Math. J. (cf. arXiv:2405.13446).

    \bibitem{OP01} 
	Giorgio Ottaviani and Raffaella Paoletti, \textit{Syzygies of Veronese embeddings},  Compositio Math. \textbf{125} (2001), 31--37.

    \bibitem{Park22}
	Jinhyung Park,  \textit{Asymptotic vanishing of syzygies of algebraic varieties}, Comm. Amer. Math. Soc. \textbf{2} (2022), 133--148.

    \bibitem{Park25}
    Jinhyung Park, \textit{Asymptotic nonvanishing of syzygies of algebraic varieties}, Math. Ann. \textbf{392} (2025), 751--779.
    
	\bibitem{Rathmann16+}
	J\"{u}rgen Rathmann, {\em An effective bound for the gonality conjecture}, preprint (2016), arXiv:1604.06072.
	
	\bibitem{Voisin02}
	Claire Voisin, {\em Green's generic syzygy conjecture for curves of even genus lying on a K3 surface}, J. Eur. Math. Soc. \textbf{4} (2002), 363--404.
	
	\bibitem{Voisin05}
	Claire Voisin, {\em Green's canonical syzygy conjecture for generic curves of odd genus},
	\newblock Compositio Math. \textbf{141} (2005), 1163--1190.
	
\end{thebibliography}

\end{document}